\numberwithin{equation}{section}
\newtheorem{theorem}{Theorem}[section]
\newtheorem{lemma}[theorem]{Lemma}
\newtheorem{proposition}[theorem]{Proposition}
\newtheorem{corollary}[theorem]{Corollary}
\newtheorem{definition}[theorem]{Definition}
\newtheorem{example}[theorem]{Example}
\newtheorem{remark}[theorem]{Remark}
\newcommand{\N}{\mathbb{N}}
\newcommand{\Q}{\mathbb{Q}}
\newcommand{\R}{\mathbb{R}}
\newcommand{\F}{\mathbb{F}}
\newcommand{\hR}{\,\!^\ast\R}
\newcommand{\ns}[1]{\,\!^\ast#1} 
\newcommand{\sh}[1]{\,\!^\circ#1} 
\newcommand{\civita}{\mathcal{R}}
\newcommand{\an}{C^{\omega}}
\newcommand{\ext}[2]{\overline{#2}_{#1}}
\newcommand{\st}{\mathrm{st}}
\renewcommand{\P}{\mathcal{P}}
\newcommand{\norm}[1]{\left\Vert#1\right\Vert}
\DeclareMathOperator*{\wlim}{w-lim}
\DeclareMathOperator*{\supp}{supp}
\newcommand{\m}{m}
\newcommand{\meas}{\mathcal{M}}
\newcommand{\leb}{\lambda}
\renewcommand{\L}{\mathcal{L}}
\newcommand{\loeb}{\m_L}
\newcommand{\extl}{\overline{\m}_L}
\newcommand{\epi}{\mathcal{U}}
\newcommand{\civint}{{$M$-integrable}}
\newcommand{\civintegral}{{$M$-integral}}
\begin{document}
	\title{A real-valued measure on non-Archimedean field extensions of $\R$}
	\author{Emanuele Bottazzi}
	\date{\today}
	\maketitle
	
\begin{abstract}
	We introduce a real-valued measure $\loeb$ on non-Archimedean ordered fields $(\F,<)$ that extend the field of real numbers $(\R,<)$.
	The definition of $\loeb$ is inspired by the Loeb measures of hyperreal fields in the framework of Robinson's analysis with infinitesimals.
	The real-valued measure $\loeb$ turns out to be general enough to obtain a canonical measurable representative in $\F$ for every Lebesgue measurable subset of $\R$, moreover the measure of the two sets is equal. In addition, $m_L$ it is more expressive than a class of non-Archimedean uniform measures.
	We focus on the properties of the real-valued measure in the case where $\F=\civita$, the Levi-Civita field.
	In particular, we compare $\loeb$ with the uniform non-Archimedean measure over $\civita$ developed by Shamseddine and Berz, and we prove that the first is infinitesimally close to the second, whenever the latter is defined.
	We also define a real-valued integral for functions on the Levi-Civita field, and we prove that every real continuous function has an integrable representative in $\civita$.
	Recall that this result is false for the current non-Archimedean integration over $\civita$.
	The paper concludes with a discussion on the representation of the Dirac distribution by pointwise functions on non-Archimedean domains.
\end{abstract}

\tableofcontents

\section{Introduction}
Measure theory on non-Archimedean fields, in particular non-Archimedean extensions of $\R$ (and not e.g. fields of $p$-adic numbers) holds the promise to be relevant for many applications. An example is the mathematical description of physical phenomena, for instance through the representation of distributions and Young measures as pointwise functions over non-Archimedean fields (for more details, we refer to the discussion in \cite{grid functions}; some recent examples are \cite{lpcivita,shamflin2}). Another relevant application is differential or algebraic geometry, as discussed for instance in \cite{berarducci-otero,kaiser}. However, currently the measure theory of non-Archimedean fields is limited to some particular extensions of $\R$ or to a more restricted class of sets than e.g.\ the $\sigma$-algebra generated by intervals.

A particular class of non-Archimedean extensions of $\R$ where a sizeable measure theory has already been developed is that of hyperreal fields of Abraham Robinson's framework of analysis with infinitesimals \cite{robi,robi1}.
In this context the elementary equivalence of $\hR$ and $\R$ and the presence of the \emph{transfer principle} allow for a rich measure theory.
An immediate consequence of the transfer principle applied to any real-valued measure $\mu$ is that $\ns{\mu}$ is a hyperreal-valued set function satisfying the following conditions
\begin{itemize}
	\item $\ns{\mu}$ is non-negative;
	\item $\ns{\mu}$ is monotone;
	\item $\ns{\mu}$ is finitely additive (and, since $\ns{\mu}$ is internal, it is also \emph{hyperfinitely additive}).
\end{itemize}
Notice that in general the hyperreal measure $\ns{\mu}$ is not $\sigma$-additive, even if $\mu$ is \cite{cutloeb}. 
For this reason, most of the initial works on hyperreal measure theory were focused on hyperfinitely additive measures.
For some applications, this limitation turns out not to be restrictive, since hyperfinitely additive measures are general enough to represent every real non-atomic measure, including those that are $\sigma$-additive \cite{bbd,henson}.
Moreover, it is possible to represent uncountably many real-valued measures with a single hyperfinitely additive measure \cite{watt}, or to require further compatibility conditions between the non-Archimedean measure and the real-valued measure it represents \cite{bbd, bbd2}.
In fact, these results are true even if one works with the family of \emph{hyperfinite counting measures}, i.e. measures of the form $\mu(A) = \frac{|A|}{|\Omega|}$ where $\Omega$ is a hyperfinite set, $A \in \ns{\P(\Omega)}$ (i.e. $A$ is an internal subset of $\Omega$) and $| \cdot |$ denotes the internal cardinality.

A novel contribution to hyperfinite measures has been introduced by Eskew in a recent preprint \cite{eskew}. Eskew defines an \emph{ultrafilter integral} of functions $f: X \rightarrow G$, where $X$ is an {arbitrary} set and $G$ is a divisible Abelian group.
The ultrafilter integral depends upon the choice of an ultrafilter $U$ over the family $\P_{fin}(X)$ of finite subsets of $X$ and takes value in a nonstandard extension of the group $G$ (more precisely, in the ultrapower $\prod_{i \in \P_{fin}(X)} G / U$).
By defining a \emph{standard part} from this ultrapower to the original group $G$, it is possible to define a $G$-valued integral for \emph{every} function $f: X \rightarrow G$.
If $G=\R$, this integral is general enough to represent every non-atomic measure, similarly to the case of hyperfinite measures \cite{henson} and numerosities \cite{bbd,bbd2}.
By considering the integral of indicator functions, this technique can be used to define real-valued measures over arbitrary sets.
We believe that this technique can be suitably rephrased as a hyperfinite sum taking values in $\ns{G}$, thus providing an extension of the usual hyperfinite measures.
For its generality, ultrafilter integration might be successfully applied to further advance the measure theory on non-Archimedean fields.

Despite the expressive power of hyperfinite measures, they lack some familiar properties of measures, such as $\sigma$-additivity. The problem of determining a suitable $\sigma$-additive, real-valued measure from an internal measure has been solved by Loeb with the introduction of the \emph{Loeb measures construction}. 
The main idea behind the Loeb measure construction applied to an internal measure $\mu$ consists of the following steps:
\begin{itemize}
	\item define the real-valued set function ${\mu_\R}$ by posing ${\mu_\R}(A) = \sh{\mu(A)}$;
	\item prove that ${\mu_\R}$ is an outer measure on the algebra of internal subsets of $\Omega$;
	\item use the \emph{Caratheodory's extension theorem} to extend ${\mu_\R}$ to a $\sigma$-additive measure $\mu_L$ on a $\sigma$-algebra that extends the algebra of internal subsets of $\Omega$.
\end{itemize}
For more details we refer to the original paper by Loeb \cite{loeb-orig} and to other presentations of the Loeb measure, such as \cite{loeb,cutloeb}.
Since their introduction, Loeb measures have proven to be relevant in a variety of applications.
The earliest examples by Loeb discuss probability theory and stochastic processes \cite{loeb-orig}, but there are further applicatons for instance in the representation of parametrized measures \cite{cutland controls} and in the study of generalized solutions to partial differential equations \cite{grid functions,illposed}.


The development of a measure theory on other non-Archimedean field extensions of $\R$ faces significant challenges. 
One of the most successful projects towards this goal is the uniform measure over the Levi-Civita field defined by Shamseddine and Berz \cite{shamseddine2012,berz+shamseddine2003} and further studied by other authors \cite{lpcivita,shamflin2,moreno,shamflin1}. 
\color{black}
The Levi-Civita field $\civita$, introduced by Levi-Civita in \cite{civita1, civita2} and subsequently rediscovered by many authors in the '900, is the smallest non-Archimedean ordered field extension of the field $\R$ of real numbers that is both real closed and sequentially complete in the order topology.
%
%
The main idea behind the definiton of the uniform measure by Shamseddine and Berz is that measurable sets are those that can be suitably approximated by closed intervals. 
Similarly, measurable functions can be suitably approximated by a family of simple functions. For more details on this topic, we refer to Section \ref{sec integral} and to  \cite{berz+shamseddine2003,lpcivita}.

It turns out that measurable functions on the Levi-Civita field are expressive enough to represent some distributions \cite{lpcivita,shamflin2}.
For instance, it is possible to define some measurable functions that represent the Dirac distribution, much in the spirit of the representation of distributions with functions of nonstandard analysis or other generalized functions (for a detailed discussion on some representations of distributions with these techniques we refer to \cite{grid functions}).
Moreover, it is also possible to represent real continuous functions with suitable equivalence classes of weak limits of measurable functions over $\civita$ \cite{lpcivita}.

However, the uniform measure on the Levi-Civita field has some limitations, mainly due to the total disconnectedness of the topology induced by the non-Archimedean metric.
A first drawback is that the family of measurable set is not closed under complements and over countable unions.
As a consequence, there are well-known examples of null sets whose complement is not measurable \cite{lpcivita,moreno,berz+shamseddine2003}.
In addition, measurable functions are only locally analytic, so in the Levi-Civita field it is not possible to obtain a measurable representative of real continuous functions \cite{lpcivita}.
Finally, in Proposition \ref{proposition shadow of a measurable set} of this paper we will argue that the measurable sets in the Levi-Civita field are not expressive enough to represent all real Lebesgue measurable sets. This is in contrast to the hyperfinitely additive measures that represent the real Lebesgue measure and for the corresponding Loeb measures. 

Another approach to the definition of (real-valued or otherwise) measures over non-Archimedean fields is related to model theory.
One of the earliest results is the definition of a finitely additive real measure on definable sets of o-minimal extensions of fields by Berarducci and Otero \cite{berarducci-otero}. The measurable sets are those definable sets that can be suitably approximated by finite unions of rectangles (this integrability condition is equivalent to the one used in the Caratheodory's extension theorem only under the hypothesis that the measure is finite).
The measure introduced by Berarducci and Otero has also provided a starting point for the development of an Hausdorff measure for definable sets in o-minimal structures \cite{fornasiero hausdorff}.
As observed by Kaiser, these measures are defined for bounded sets and their range is just a semiring \cite{kaiser}.


A significant contribution to the development of a non-Archimedean measure theory on real closed fields $\F$ with a model-theoretic approach is the work by Kaiser \cite{kaiser}.
He introduces a non-Archimedean measure for semialgebraic sets and a corresponding integral for semialgebraic functions over non-Archimedean real closed fields with Archimedean value groups. This measure is finitely additive, monotone and translation invariant. In order to satisfy these properties, the measure takes values outside the field $\F$, since some integrals of semialgebraic functions require a notion of logarithm that is not available for arbitrary real closed fields (for more details on this limitation, we refer to the discussion in \cite{kaiser}).

It is relevant to observe that the measure developed by Kaiser for the Levi-Civita field is not equal to the uniform measure introduced by Shamseddine and Berz, since the former is defined only for semialgebraic sets, while the latter is defined also on some countable unions of intervals, that are not semialgebraic.

The problem of a non-Archimedean measure and integration has also been discussed in the setting of surreal numbers by Fornasiero \cite{fornasiero} and Costin et al. \cite{integral}.
However, most of the results discussed in the latter paper are negative.

Taking into account the existing literature on measures on non-Archimedean fields, some authors suggest that a measure theory on non-hyperreal field extensions of $\R$ requires a tame setting.
Indeed, measures in Robinson's framework are mostly defined on \emph{internal sets}, with the notable exception of the Loeb measures (the \emph{numerosities} by Benci et al. \cite{bbd,bbd2} and the related $\Omega$-limit approach to probabilities \cite{omega,tesi}, on the other hand, use functions defined on the powerset of a classic set with values in a hyperreal field. In both approaches these functions are obtained as the restriction of suitable internal measures, as discussed for instance in \cite{prusso}).
So far, the notion of internal set is only meaningful for hyperreal fields, so that the techniques of Robinson's framework cannot be adapted to other non-Archimedean fields.
In the more general settings of non-Archimedean real closed fields with Archimedean value groups, the measure is defined only for semialgebraic sets, and the integral is defined only for semialgebraic functions.
Finally, in the Levi-Civita field, where there is no notion of {interal set} and the existing non-Archimedean measure has been developed without model-theoretical notions, the family of measurable sets is badly behaved: for instance, we have already mentioned that it is not closed under relative complements.

In this paper, inspired by the success of the real-valued Loeb measure construction and motivated from the consideration that this real-valued measure is not defined only on a well-behaved family of sets (namely, the internal sets), we develop a uniform, real-valued measure for non-Archimedean field extensions of $\hR$.
The main idea is shared with the Lebesgue measure, and consists in defining an outer measure from the length of intervals.
However, we will not consider the length of the interval of endpoints $a$ and $b$ to be equal to $b-a$, but rather to the \emph{standard part} of this difference, namely the real number closest to $b-a$.
This will allow to define an outer measure and, via the  Caratheodory's extension theorem, a corresponding $\sigma$-additive measure.

We will show that this real-valued measure shares some of the properties of the Loeb measures.
For instance, the measure is defined on a $\sigma$-algebra of subsets of $\F$ that is rich enough to represent Lebesgue measurable subsets of $\R$.
It is also possible to extend the real-valued measure to $\F^n$ and, consequently, to define a real-valued integral for functions $f: \F \rightarrow \F$.

The real-valued measure is also compatible with some of the existing measures discussed above.
If $\F=\hR$ is a field of hyperreal numbers, then the real-valued measure agrees with the Loeb measure obtained from the nonstandard extension of the Lebesgue measure over $\R$ (however, it is strictly weaker than the Loeb measure, since e.g. it is not able to assign a positive finite measure to hyperfinite unions of intervals of an infinitesimal length).
If $\F$ is Cauchy complete, then the real-valued measure agrees with the standard part of a non-Archimedean uniform measure that generalizes the one defined by Shamseddine and Berz for the Levi-Civita field.

Finally, we focus on the Levi-Civita field. By adapting the techniques developed in the first part of the paper, we define a real-valued integral on the Levi-Civita field in a way that the corresponding integrable functions are expressive enough to represent real measurable functions.
This result improves upon the previous representation obtained by weakly Cauchy sequences of measurable functions \cite{lpcivita}.
As an application, we improve on previous representations of the Dirac distribution by pointwise functions on non-Archimedean domains.


\color{black}

\subsection{Structure of the paper}

Section \ref{sec loeb} contains the definition of the measure $\loeb$ and of the algebra of $\loeb$-measurable sets. For a matter of convenience, we will refer to $\loeb$-measurable set as \emph{$L$-measurable sets}.
We will show that the the measure $\loeb$ shares some properties with the Lebesgue measure: it is uniform, translation invariant and homogeneous. Moreover, we will show that $\loeb$ can be interpreted as an extension to $\F$ of the real Lebesgue measure. In fact, the main result of this section is the proof that every Lebesgue measurable subset of $\R$ has a canonical $L$-measurable representative in $\F$ with the same measure as the original set.
We also discuss the relation betwen $\loeb$ and $\leb_L$, the Loeb measure obtained from the Lebesgue measure, under the hypothesis that $\F$ is a field of hyperreal numbers. As expected, the Loeb measure is more expressive than the real-valued measure, however the two measures agree on a relevant class of subsets.
Finally, we extend the definition of the real-valued measure to the $n$-dimensional space $\F^n$, and from this definition we introduce a real-valued integral as the measure of the set under the graph of a function. This approach is similar to the introduction of the Lebesgue integral via the $n$-dimensional Lebesgue measure over $\R^n$, presented for instance in \cite{pugh}.

In Section \ref{section comparison with LC} we discuss the relation between the real-valued measure $\loeb$ and a non-Archimedean uniform measure $\m$ on Cauchy complete fields $\F$. This measure is inspired by the one developed for the Levi-Civita field by Shamseddine and Berz. In fact, when $\F=\civita$, then the measure defined in this paper coincides with the one defined by Shamseddine and Berz.
We prove that, if a set $A \subseteq \F$ is $\m$-measurable, then it is also $L$-measurable and $\loeb(A) = \sh{\m(A)}$.
Moreover, we will show that the non-Archimedean measure $\m$ is significantly less expressive than the real-vaued measure $\loeb$, since the projection of $\m$-measurable subsets of $\F$ to $\R$ can be written as a finite union of intervals and of a countable set.

We further pursue the development of a real measure theory on the Levi-Civita field with the introduction of another real-valued integral on the Levi-Civita field in Section \ref{sec integral}.
The definition of this real-valued integral relies on the existing integration theory \cite{lpcivita,shamseddine2012,berz+shamseddine2003}.
In analogy with the discussion in Section \ref{section comparison with LC}, we prove coherence with the existing non-Archimedean integral. 

An application of the real-valued integral is presented in Section \ref{sec applications}, where we discuss the representation of some real distributions as pointwise functions defined on the Levi-civita field, sharpening some of the results obtained in \cite{lpcivita}.

\subsection{Preliminary definitions}

Throughout the paper $(\F,<_\F)$ will denote a non-Archimedean field extension of $(\R,<_\R)$.
In particular, we will suppose that $\R \subset \F$ and that for every $x, y \in \R$ $x <_\R y$ if and only if $x <_\F y$.
Due to this assumption, we will often write $(\F,<)$ instead of $(\F,<_\F)$.

A number $x \in \F$ is called
\begin{itemize}
	\item \emph{infinitesimal} if $|x| \leq r$ for every $r \in \R$, $r > 0$;
	\item \emph{finite} if there exists $r \in \R$ such that $|x|<r$;
	\item \emph{appreciable} if $x$ is finite and non-infinitesimal;
	\item \emph{infinite} if $|x|>r$ for every $r \in\R$.
\end{itemize}

If $x \in \F$ is infinitesimal, we will write $x \simeq 0$.
If $x \in \F$ is a nonzero infinitesimal, we will write $x \sim 0$.
In analogy with Robinson's framework of analysis with infinitesimals, if $x \in \F$, we will refer to the set $\mu(x)=\{ y \in \F : |x-y|\simeq 0 \}$ as the \emph{monad} of the point $x$. 
Recall also that monads are not intervals \cite{lpcivita}.

We define $\F_{fin} = \{x\in\F : \exists r \in \R : |x|<r\}$, i.e. $\F_{fin}$ is the ring of all finite elements of $\F$.

We find it also useful to define the \emph{standard part} of an element of $\F$.

\begin{definition}\label{def sh}
	if $x \in \F$, 
	we define
	$$
	\sh{x} = 
	\left\{
	\begin{array}{ll}
	\inf\{y\in\R : x\leq y\} = \sup\{z\in\R : z\leq x\} & \text{if } |x|<r \text{ for some } r \in\R\\
	+\infty & \text{if } x>r \text{ for all } r \in \R\\
	-\infty & \text{if } x<r \text{ for all } r \in \R.
	\end{array}
	\right.
	$$
\end{definition}

The function $\sh : \F \rightarrow \R \cup \{+\infty,-\infty\}$ is well-defined and surjective.
Moreover, it is a homomorphism between the rings $\F_{fin}$ and $\R$.

\begin{lemma}
	For every $x, y \in \F_{fin}$
	\begin{enumerate}
		\item $\sh{(x+y)}=\sh{x}+\sh{y}$;
		\item $\sh{(xy)}=\sh{x}\sh{y}$.
	\end{enumerate}
\end{lemma}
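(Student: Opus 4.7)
The plan is to show first that for every $x \in \F_{fin}$, the difference $x - \sh{x}$ is infinitesimal, and then to combine this with the elementary fact that the infinitesimals form an ideal of $\F_{fin}$.

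For the key preliminary observation, fix $x \in \F_{fin}$ and set $r = \sh{x} \in \R$; this is well-defined because $x$ is finite. I claim $x - r \simeq 0$. Suppose for contradiction there exists $s \in \R$ with $s > 0$ and $|x - r| > s$. If $x - r > s$, then $r + s/2 \in \R$ satisfies $r + s/2 < x$, so $r + s/2$ belongs to $\{z \in \R : z \leq x\}$, contradicting $r = \sup\{z \in \R : z \leq x\}$. The case $x - r < -s$ is symmetric using the $\inf$ characterization. Hence we may write $x = \sh{x} + \epsilon_x$ and $y = \sh{y} + \epsilon_y$ with $\epsilon_x, \epsilon_y \simeq 0$.

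Next I would record three routine closure properties of the infinitesimals that follow directly from the definition: (a) if $\epsilon, \delta \simeq 0$ then $\epsilon + \delta \simeq 0$ (given $r > 0$ real, use $|\epsilon| \leq r/2$ and $|\delta| \leq r/2$); (b) if $a \in \F_{fin}$ with $|a| < M$, $M \in \R$, and $\epsilon \simeq 0$, then $a\epsilon \simeq 0$ (given $r > 0$ real, use $|\epsilon| \leq r/M$); (c) as a consequence of (b) with $a = \epsilon$, the product of two infinitesimals is infinitesimal.

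For part 1, write $x + y = (\sh{x} + \sh{y}) + (\epsilon_x + \epsilon_y)$; by (a) the second summand is infinitesimal, so $\sh{x} + \sh{y}$ is a real number whose distance from $x + y$ is infinitesimal, which by the uniqueness implicit in Definition \ref{def sh} (any real number infinitesimally close to a finite $z$ must equal $\sh{z}$, again by the $\sup/\inf$ characterization) gives $\sh{(x+y)} = \sh{x} + \sh{y}$. For part 2, expand $xy = \sh{x}\sh{y} + (\sh{x}\epsilon_y + \sh{y}\epsilon_x + \epsilon_x \epsilon_y)$ and apply (a), (b), (c) to conclude that the parenthesized term is infinitesimal, then invoke uniqueness again.

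The only non-routine step is the preliminary claim that $x - \sh{x} \simeq 0$, which is where the $\sup/\inf$ definition does all the work; the rest is bookkeeping with the ideal of infinitesimals. Once that uniqueness/closeness lemma is in hand, both identities reduce to algebra.
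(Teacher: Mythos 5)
Your proof is correct and follows essentially the same route as the paper: decompose $x=\sh{x}+\varepsilon_x$, $y=\sh{y}+\varepsilon_y$ and use that the infinitesimals form an ideal of $\F_{fin}$. The only difference is that you explicitly justify the steps the paper takes for granted (that $x-\sh{x}$ is infinitesimal, the closure properties, and the uniqueness of the real number infinitesimally close to a finite element), which is a welcome but not substantively different elaboration.
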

\begin{proof}
	Let $x = \sh{x}+\varepsilon_x$ and $y = \sh{y}+\varepsilon_y$, with $\varepsilon_x$ and $\varepsilon_y$ infinitesimals in $\F$.
	Then $x+y = \sh{x}+\sh{y}+\varepsilon_x+\varepsilon_y$.
	Since $\sh{x}+\sh{y} \in \R$ and $\varepsilon_x+\varepsilon_y$ is a sum of two infinitesimals, $\sh{(x+y)}=\sh{x}+\sh{y}$.
	
	Similarly, $xy = \sh{x}\sh{y}+\varepsilon_x \sh{y} + \varepsilon_y \sh{x} + \varepsilon_x \varepsilon_y$, and
	\begin{itemize}
		\item $\sh{x}\sh{y} \in\R$;
		\item $\varepsilon_x \sh{y}$, $\varepsilon_y \sh{x}$ and $\varepsilon_x \varepsilon_y$ are infinitesimals.
	\end{itemize}
	We deduce that $\sh{(xy)}=\sh{x}\sh{y}$.
\end{proof}

Another useful notion borrowed from Robinson's framework is that of \emph{nearstandard point} in a set.

\begin{definition}\label{def nearstandard}
	Let $A \subseteq \F$.	
	We will say that a point $x \in A$ is nearstandard in $A$ iff $\sh{x} \in A$.
\end{definition}

For every $a, b \in \mathbb{F}$ with $a \leq b$ we will denote by $[a,b]_{\mathbb{F}}$ the set $\{ x \in \mathbb{F} : a \leq x \leq b \}$, and by $(a,b)_{\mathbb{F}}$ the set $\{ x \in \mathbb{F} : a < x < b \}$.
The sets $[a,b)_{\mathbb{F}}$ and $(a,b]_{\mathbb{F}}$ are defined accordingly.
The above definitions are extended in the usual way if $a = -\infty$ or $b = +\infty$. 
If $\mathbb{F}=\R$, we will often write $[a,b]$ instead of $[a,b]_{\R}$.

For all $a, b \in \F$ with $a<b$, we will denote by $I(a,b)$ any of the sets $(a,b)_\F$, $[a,b)_\F$, $(a,b]_\F$ or $[a,b]_\F$.
We will call such sets \emph{bounded intervals} of $\F$.
The length of an interval of the form $I(a,b)$ is denoted by $l(I(a,b))$ and is defined as $b-a$.


Finally, we will denote by $\leb^n$ the Lebesgue measure over $\R^n$.

\section{A real-valued measure on non-Archimedean extensions of $\R$}\label{sec loeb}

We begin our treatment of a real-valued measure on non-Archimedean extensions of $\R$ by introducing an outer measure over $\F$ that assumes values in the extended real numbers $\R \cup \{+\infty\}$.
This outer measure is obtained from the standard part of the length of an interval, in analogy with the Lebesgue outer measure.

\begin{definition}\label{def outer}
	For every $a, b \in \F$, $a\leq b$, define $l_L(I(a,b)) = \sh{(b-a)}$.
	For every $A \subseteq \F$ such that there exists a sequence of bounded intervals $\{I_n\}_{n\in\N}$ satisfying $A \subseteq \bigcup_{n \in \N} I_n$,
	define
	$$
		\extl(A) = \inf\left\{\sum_{n \in \N}l_L(I_n) : A \subseteq \bigcup_{n\in\N} I_n \right\}.
	$$
	If for every sequence of bounded intervals $\{I_n\}_{n\in\N}$ we have $A \not\subseteq \bigcup_{n \in \N} I_n$, define $\extl(A) = +\infty$.
\end{definition}

The last condition of Definition \ref{def outer} ensures that $\extl$ is defined on the powerset of $\F$, since e.g. $\F$ itself might not be contained in the union of any countable union of bounded intervals.
This property is essential in proving that $\extl$ is an outer measure over $\F$.

\begin{lemma}
	The function $\extl : \P(\F)\rightarrow\R\cup\{+\infty\}$ is an outer measure.
\end{lemma}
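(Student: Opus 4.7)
The plan is to verify the three defining properties of an outer measure on $\P(\F)$: $\extl(\emptyset)=0$, monotonicity, and countable subadditivity. The argument closely mirrors the standard proof for the Lebesgue outer measure, with $\sh{(b-a)} \in \R \cup \{+\infty\}$ playing the role of $b-a$. The only real source of bookkeeping is the last clause of Definition \ref{def outer}, which assigns $+\infty$ to every set that admits no countable cover by bounded intervals; I will need to check monotonicity and subadditivity by hand in those degenerate cases, since they are not captured by any infimum.

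The first two properties are essentially immediate. For $\extl(\emptyset) = 0$: given any real $\epsilon > 0$, take $I_n = [0,\epsilon/2^{n+1}]_\F$; these cover $\emptyset$ and $\sum_n l_L(I_n) = \sum_n \sh{(\epsilon/2^{n+1})} = \epsilon$, so $\extl(\emptyset) \leq \epsilon$ for every real $\epsilon > 0$. For monotonicity: if $A \subseteq B$, any countable bounded-interval cover of $B$ is also a cover of $A$, so the infima satisfy $\extl(A) \leq \extl(B)$; and if $B$ admits no countable cover, then $\extl(B) = +\infty$ and the inequality is vacuous.

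For countable subadditivity, let $\{A_n\}_{n \in \N} \subseteq \P(\F)$ and $A = \bigcup_n A_n$. If some $\extl(A_n) = +\infty$ the inequality is trivial, so assume each $A_n$ admits a countable bounded-interval cover. Given $\epsilon > 0$, for each $n$ choose $\{I_{n,k}\}_{k \in \N}$ with $A_n \subseteq \bigcup_k I_{n,k}$ and $\sum_k l_L(I_{n,k}) \leq \extl(A_n) + \epsilon/2^{n+1}$ via the infimum definition; the doubly-indexed family $\{I_{n,k}\}_{n,k}$ then covers $A$, and
\[
\extl(A) \leq \sum_n \sum_k l_L(I_{n,k}) \leq \sum_n \extl(A_n) + \epsilon,
\]
so letting $\epsilon \to 0$ yields the claim. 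If instead $A$ admits no countable bounded-interval cover, at least one $A_n$ must lack one as well---else the reindexed doubly-indexed family above would cover $A$---and then $\sum_n \extl(A_n) = +\infty$, so the inequality holds trivially. I do not anticipate any genuine obstacle beyond this degenerate-case bookkeeping, since the standard-part operation preserves non-negativity and the real-valued sums in the definition are handled entirely in $\R \cup \{+\infty\}$.
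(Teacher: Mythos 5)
Your proof is correct and follows essentially the same route as the paper: the standard Lebesgue-outer-measure argument, with monotonicity read off from the inclusion of cover families and $\sigma$-subadditivity obtained via the $\varepsilon/2^n$ choice of covers and the union of the resulting countable covers. Your extra bookkeeping for sets admitting no countable cover by bounded intervals is a harmless (and correct) elaboration of what the paper treats implicitly through the $+\infty$ clause of the definition.
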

\begin{proof}
	We have already observed that $\extl$ is defined on $\P(\F)$.
	
	Since $l_L(I) \geq 0$ for every interval $I \subseteq \F$, $\extl(A) \geq 0$ for all $A \subseteq \F$.
	Moreover, $\extl(\emptyset)=0$.
	
	In order to prove monotonicity, i.e.\ that $\extl(A) \leq \extl(B)$ whenever $A \subseteq B$, notice that if $B \subseteq \bigcup_{n \in \N} I_n$, then also $A \subseteq \bigcup_{n \in \N} I_n$.
	As a consequence we get
	$$
	\extl(A) =
	\inf\left\{\sum_{n \in \N}l_L(I_n) : A \subseteq \bigcup_{n \in \N} I_n \right\}
	\leq
	\inf\left\{\sum_{n \in \N}l_L(I_n) : B \subseteq \bigcup_{n \in \N} I_n \right\},
	$$
	as desired.
	
	Finally, we need to prove $\sigma$-subadditivity of $\extl$, i.e. that if $A_n \subseteq \F$ for all $n\in\N$, then
	$$
	\extl\left(\bigcup_{n \in \N} A_n\right) \leq \sum_{n \in \N}\extl(A_n).
	$$
	The result is trivially true if $\sum_{n \in \N}\extl(A_n)=+\infty$, so assume that this is not the case.
	
	Suppose then that  $\sum_{n \in \N}\extl(A_n) \in \R$: this entails also $\extl(A_n)\in\R$ for every $n\in\N$.
	Then for every $\varepsilon \in \R$, $\varepsilon >0$, there exists a family of sets $\{I_{n,k}^\varepsilon\}_{k\in\N}$ such that
	\begin{itemize}
		\item $A_n \subseteq \bigcup_{k \in \N} I_{n,k}^\varepsilon$ and
		\item $\sum_{k \in \N}l_L(I_{n,k}^\varepsilon) \geq \extl(A_n)+\frac{\varepsilon}{2^n}$.
	\end{itemize}
	We have also the inclusion $\bigcup_{n\in\N} A_n \subseteq \bigcup_{n \in \N}\left( \bigcup_{k\in\N} I_{n,k}^\varepsilon \right)$.
	From monotonicity of the outer measure, we obtain
	$$
		\extl\left(\bigcup_{n \in \N} A_n\right)
		\leq
		\sum_{n\in\N} \left( \sum_{k \in \N}l_L(I_{n,k}^\varepsilon) \right)
		\leq
		\sum_{n \in \N} \left(\extl(A_n)+\frac{\varepsilon}{2^n}\right)
		=
		\varepsilon+\sum_{n \in \N} \extl(A_n).
	$$
	By the arbitrariness of the real parameter $\varepsilon>0$, we conclude that $\extl$ is $\sigma$-subadditive.
\end{proof}

\begin{remark}
	From monotonicity of the outer measure $\extl$ we deduce that every set contained in an interval of an infinitesimal length has outer measure $0$, while if a set contains intervals of length at least $n$ for every $n \in\N$, then its outer measure is infinite. 
	As a consequence, 
	$\extl(\F_{fin})=+\infty$ and $\extl(A) = +\infty$ whenever $A \supset \F_{fin}$.
\end{remark}

From the outer measure $\extl$ defined over $\P(\F)$, it is possible to obtain a $\sigma$-algebra of measurable sets.

\begin{definition}
	Given the outer measure $\extl$ on $\F$, the following family of subsets of $\F$ is
	called the \emph{Caratheodory $\sigma$-algebra} associated to $\extl$:
	$$
	\mathfrak{C}\ =\
	\left\{A\subseteq\F:
	\extl(B)=\extl(B\cap A)+\extl(B\setminus A)
	\text{ for all }B\subseteq\F\right\}.$$
	If $A \in \mathfrak{C}$, we will say that $A$ is $L$-measurable.
\end{definition}

A well known theorem of Caratheodory states that the above family $\mathfrak{C}$ is indeed a
$\sigma$-algebra, and that the restriction of $\extl$ to $\mathfrak{C}$, that we will denote by $\loeb$, is a complete measure, i.e. a measure such that $\loeb(A)=0$ implies that, for every $B\subseteq A$, $B \in \mathfrak{C}$ and $\loeb(B)=0$ (for more details on the Caratheodory's extension theorem and on complete measures, we refer for instance to Chapter 2 of \cite{Yeh}).

From completeness of $\mathfrak{C}$, we obtain the following regularity property.
In the sequel, we will use it as a criterion for $L$-measurability.

\begin{lemma}\label{lemma A c B c C}
	Let $A, C \in \mathfrak{C}$.
	If $\loeb(A) = \loeb(C)<+\infty$, then for every $B \subseteq \F$ that satisfies $A \subseteq B \subseteq C$, $B\in\mathfrak{C}$ and $\loeb(B) = \loeb(A) = \loeb(C)$.
\end{lemma}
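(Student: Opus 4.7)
The plan is to reduce the claim to the completeness of $\loeb$, which is already invoked in the paragraph immediately preceding the statement. Since $A \subseteq C$ with $A, C \in \mathfrak{C}$, the difference $C \setminus A$ lies in $\mathfrak{C}$, and by finite additivity applied to the disjoint decomposition $C = A \sqcup (C \setminus A)$ one gets $\loeb(C \setminus A) = \loeb(C) - \loeb(A)$. The finiteness hypothesis $\loeb(A) = \loeb(C) < +\infty$ is what makes this subtraction legitimate, and it yields $\loeb(C \setminus A) = 0$.

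Next, since $A \subseteq B \subseteq C$, the inclusion $B \setminus A \subseteq C \setminus A$ shows that $B \setminus A$ is a subset of a $\loeb$-null set. By completeness of the Caratheodory $\sigma$-algebra $\mathfrak{C}$, every subset of a null set is itself measurable with measure zero, so $B \setminus A \in \mathfrak{C}$ and $\loeb(B \setminus A) = 0$. Writing $B = A \cup (B \setminus A)$ as a disjoint union of two elements of $\mathfrak{C}$, one concludes $B \in \mathfrak{C}$ and
$$
\loeb(B) = \loeb(A) + \loeb(B \setminus A) = \loeb(A) = \loeb(C).
$$

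There is really no hard step here; the only subtle point is the role of the finiteness hypothesis, which is indispensable for deducing $\loeb(C \setminus A) = 0$ from the equality $\loeb(A) = \loeb(C)$. Once that is in hand, the conclusion is a direct application of completeness plus finite additivity, both already guaranteed by the Caratheodory construction recalled just above.
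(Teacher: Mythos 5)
Your proof is correct. It follows the same overall skeleton as the paper's argument --- exhibit $B \setminus A$ as a measurable null set and then write $B = A \cup (B \setminus A)$ --- but the way you produce the null set is slightly different. The paper never forms $C \setminus A$: it first uses monotonicity of the outer measure to get $\extl(A) \leq \extl(B) \leq \extl(C)$, hence $\extl(B) = \loeb(A)$, and then applies the Caratheodory measurability of $A$ with test set $B$, namely $\extl(B) = \extl(B \cap A) + \extl(B \setminus A)$, to conclude $\extl(B \setminus A) = 0$ directly at the level of the outer measure; measurability of $B \setminus A$ then comes from the fact that sets of outer measure zero lie in $\mathfrak{C}$. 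You instead work inside the measure on $\mathfrak{C}$: closure of the $\sigma$-algebra under relative complements gives $C \setminus A \in \mathfrak{C}$, finite additivity together with the finiteness hypothesis gives $\loeb(C \setminus A) = 0$, and completeness in exactly the form the paper states it (every subset of a $\loeb$-null set is measurable and null) handles $B \setminus A \subseteq C \setminus A$. Both routes are valid and use the finiteness hypothesis in the same essential way (to cancel $\extl(A)$, respectively $\loeb(A)$, from an equation); your version leans on additivity of the completed measure and the literal completeness statement, while the paper's version isolates the null set using only the Caratheodory splitting condition for $A$ and monotonicity of $\extl$.
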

\begin{proof}
	Let $A, B$ and $C$ satisfy the hypotheses of the lemma.
	By monotonicity of the outer measure, $\loeb(A) = \extl(A) \leq \extl(B) \leq \extl(C) = \loeb(C)$.
	Thus $\extl(B) = \loeb(A) = \loeb(B)$.
	
	Since $A$ is measurable, $\extl(B) = \extl(A \cap B) + \extl(B \setminus A)$.
	However, $A \cap B = A$ and $\extl(B) = \extl(A)$, so that $\extl(B\setminus A) = 0$.
	Since $\mathfrak{C}$ is complete, $B \setminus A \in \mathfrak{C}$.
	Thus $B = A \cup \left(B\setminus A\right)$, i.e.\ $B$ is the union of two $L$-measurable sets. Since $\mathfrak{C}$ is a $\sigma$-algebra, hence closed also for finite unions, $B$ is also $L$-measurable.
\end{proof}

The measure $\loeb$ shares some properties with the Lebesgue measure.
For instance, it is translation invariant.

\begin{lemma}
	If $A \subseteq \F$ is $L$-measurable, then for every $x \in \F$ the set
	$$
		A+x = \{y : \exists a \in A : y=a+x\}
	$$
	is $L$-measurable and $\loeb(A) = \loeb(A+x)$
\end{lemma}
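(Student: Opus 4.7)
The plan is to first establish that the outer measure $\extl$ itself is translation invariant on all of $\P(\F)$, and then deduce $L$-measurability of $A+x$ by a standard manipulation of the Caratheodory criterion. The measure equality $\loeb(A)=\loeb(A+x)$ will then be immediate from translation invariance of $\extl$.

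First I would observe the geometric fact that translation maps bounded intervals to bounded intervals of the same length: for $x \in \F$ and any bounded interval $I(a,b)$, one has $I(a,b)+x = I(a+x,b+x)$, and
$$l_L(I(a+x,b+x)) = \sh{((b+x)-(a+x))} = \sh{(b-a)} = l_L(I(a,b)).$$
Consequently, if $\{I_n\}_{n\in\N}$ is a countable cover of an arbitrary set $E \subseteq \F$ by bounded intervals, then $\{I_n + x\}_{n\in\N}$ is a countable cover of $E+x$ by bounded intervals with the same total $l_L$-sum. Taking infima over all such covers shows $\extl(E+x) \leq \extl(E)$; applying the same argument to $E+x$ with translation by $-x$ yields the reverse inequality, hence
$$\extl(E+x) = \extl(E) \quad \text{for all } E \subseteq \F, \, x \in \F.$$

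Next, to prove that $A+x \in \mathfrak{C}$, pick an arbitrary test set $B \subseteq \F$ and use the set-theoretic identities
$$B \cap (A+x) = \bigl((B-x) \cap A\bigr)+x, \qquad B \setminus (A+x) = \bigl((B-x) \setminus A\bigr)+x.$$
Applying translation invariance of $\extl$ to both pieces, and then the Caratheodory splitting property of the $L$-measurable set $A$ against the test set $B-x$, I obtain
$$\extl(B \cap (A+x)) + \extl(B \setminus (A+x)) = \extl((B-x) \cap A) + \extl((B-x) \setminus A) = \extl(B-x) = \extl(B).$$
Hence $A+x \in \mathfrak{C}$, and finally $\loeb(A+x) = \extl(A+x) = \extl(A) = \loeb(A)$.

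I do not anticipate any real obstacle: the only nontrivial ingredient is that $\sh{\cdot}$ respects differences, which was already established, and all remaining steps are the standard Lebesgue-style manipulations. The mild point to be careful about is the case $\extl(E) = +\infty$ (including when $E$ is not covered by any countable union of bounded intervals), but since the map $E \mapsto E+x$ is a bijection on $\P(\F)$ that sends countable interval-covers to countable interval-covers, the infinite case is handled uniformly with the finite one.
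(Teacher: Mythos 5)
Your proof is correct and follows essentially the same route as the paper: the paper states the two key facts (covers translate to covers and $l_L$ is invariant under translation) and then defers the standard Caratheodory-criterion manipulation to the classical Lebesgue argument in Yeh, which is exactly what you have written out in full, including the correct handling of the $+\infty$ case.
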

\begin{proof}
	This is consequence of the two properties
	$$
		A \subseteq \bigcup_{n \in \N} I_n \Rightarrow A+x \subseteq \bigcup_{n \in \N} (I_n+x)
	$$
	and $l_L(I)=l_L(I+x)$ for every interval $I \subseteq \F$ and for every $x \in \F$.
	
	The proof can then be carried out as in the usual proof of translation invariance of the Lebesgue measure; for more details we refer e.g.\ to Lemma 3.15 and Theorem 3.16 of \cite{Yeh}.
\end{proof}

Notice however that $\loeb$ is not positively homogeneous, and that the very same notion of positive homogeneity needs to be adapted to the non-Archimedean setting.

\begin{proposition}\label{prop positive homogeneity}
	If $A \subseteq \F$ is $L$-measurable and if $\loeb(A) <+\infty$, then for every $x \in \F_{fin}$ the set $$xA= \{y : \exists a \in A : y=a+x\}$$ is $L$-measurable and $\loeb(xA) = |\sh{x}|\loeb(A)$.
\end{proposition}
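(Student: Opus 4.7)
The plan is to prove a scaling identity for the outer measure itself: $\extl(xE) = |\sh{x}|\,\extl(E)$ for every $E \subseteq \F$ and every $x \in \F_{fin}$ (where $xE = \{xa : a \in E\}$; the statement seems to have a typo reproducing the translation formula, but positive homogeneity makes sense only for scalar multiplication). From this, measurability of $xA$ will follow from the Caratheodory condition on $A$ via the bijection $y \mapsto x^{-1}y$, and the equality of measures follows immediately. The case $x = 0$ is trivial since $xA = \{0\}$. Assume $x \neq 0$.

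The scaling identity for appreciable $x$ is the easy part. If $\{I_n\}_{n\in\N}$ is a cover of $E$ by bounded intervals, then $\{xI_n\}$ is a cover of $xE$ by bounded intervals (possibly with endpoints reversed if $x<0$), and using the ring-homomorphism property of the standard part together with $\sh{|x|} = |\sh{x}|$, one computes
\[
l_L(xI_n) \;=\; \sh{(|x|\cdot l(I_n))} \;=\; |\sh{x}|\cdot l_L(I_n).
\]
Taking an infimum gives $\extl(xE) \le |\sh{x}|\,\extl(E)$. The reverse inequality follows by applying the same bound to $x^{-1}$ and the set $xE$, using that $x^{-1} \in \F_{fin}$ since $x$ is appreciable, and $|\sh{x^{-1}}| = |\sh{x}|^{-1}$.

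The infinitesimal case is the main subtlety, and I expect it to be the step most worth checking. If $x \sim 0$ and $\extl(E) < +\infty$, then for any $\varepsilon > 0$ we may cover $E$ by bounded intervals $\{I_n\}$ with $\sum_{n} l_L(I_n) \le \extl(E) + 1$. Because the total is finite, each $l_L(I_n) = \sh{l(I_n)}$ is finite, so each length $l(I_n)$ lies in $\F_{fin}$; then $x\cdot l(I_n)$ is infinitesimal, so $l_L(xI_n) = 0$ for every $n$. Consequently $\extl(xE) = 0 = |\sh{x}|\,\extl(E)$. (If $\extl(E) = +\infty$ the identity holds trivially under the convention $0\cdot\infty = 0$, but we only need the case $\extl(A) = \loeb(A) < +\infty$ required in the hypothesis.)

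Finally I deduce measurability. When $x$ is infinitesimal, $\extl(xA) = 0$ together with completeness of $\mathfrak{C}$ yields $xA \in \mathfrak{C}$ and $\loeb(xA) = 0 = |\sh{x}|\,\loeb(A)$. When $x$ is appreciable, for every $B \subseteq \F$ write $E = x^{-1}B$. The identity $\extl(xE) = |\sh{x}|\,\extl(E)$ applied to $E$, $E \cap A$, and $E\setminus A$, combined with $L$-measurability of $A$ applied to $E$, gives
\[
\extl(B) \;=\; |\sh{x}|\,\extl(x^{-1}B) \;=\; |\sh{x}|\bigl(\extl(x^{-1}B\cap A) + \extl(x^{-1}B\setminus A)\bigr) \;=\; \extl(B\cap xA) + \extl(B\setminus xA),
\]
so $xA \in \mathfrak{C}$. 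Specializing the scaling identity to $E = A$ then yields $\loeb(xA) = |\sh{x}|\,\loeb(A)$.
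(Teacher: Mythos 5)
Your proof is correct and follows essentially the same route as the paper for the measure identity: the scaling relation $\extl(xE)=|\sh{x}|\,\extl(E)$ is obtained from interval covers together with $l_L(xI)=|\sh{x}|\,l_L(I)$, the appreciable case being closed by applying the same bound to $x^{-1}$, and the infinitesimal case giving outer measure zero. Where you go beyond the paper is the measurability step, which the paper only delegates to an argument ``analogous to Theorem 3.18 of Yeh'': you carry it out explicitly, transporting the Caratheodory condition through the bijection $y\mapsto x^{-1}y$ when $x$ is appreciable, and, when $x$ is infinitesimal, deducing $xA\in\mathfrak{C}$ from $\extl(xA)=0$ and completeness --- the right fix, since the $x^{-1}$ argument is unavailable there. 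You also handle a point the paper glosses over: for infinitesimal $x$ one must restrict to covers of finite total length, so that each $l(I_n)\in\F_{fin}$ and hence $l_L(xI_n)=0$. Two small remarks: you are right that the displayed definition of $xA$ in the statement is a typo for $\{xa: a\in A\}$; and your parenthetical claim that the identity persists when $\extl(E)=+\infty$ under the convention $0\cdot\infty=0$ is false (the paper's example immediately after the proposition shows $\loeb(xA)$ can be $0$, any positive real, or $+\infty$ when $\loeb(A)=+\infty$ and $x\simeq 0$), but you explicitly do not use that case, so the proof stands.
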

\begin{proof}
	If $x=0$, the desired result is trivially satisfied, since $0A = \{0\}$.
	
	For every $x \in \F_{fin}$, $x\ne 0$, and for every $A \subseteq \F$ with $\extl(A) <+\infty$, we have the inclusion
	$$
	A \subseteq \bigcup_{n \in \N} I_n \Rightarrow xA \subseteq \bigcup_{n \in \N} (xI_n)
	$$
	and the equality $l_L(xI)=|\sh{x}|l_L(I)$ for every interval $I \subseteq \F$.
	This is sufficient to conclude $\extl(xA)\leq |\sh{x}|\extl(A)$ for every $A \subseteq \F$ with $\extl(A) <+\infty$ and for every $x \in \F_{fin}$.
	
	If $x$ is an infinitesimal, $\extl(xA)\leq |\sh{x}|\extl(A) = 0 \cdot \extl(A) = 0$. 
	If $x \in \F_{fin}$ is appreciable, then $x^{-1}$ is neither infinite nor infinitesimal.
	Then also
	$$\extl(A) = \extl(x^{-1} xA) \leq |\sh{(x^{-1})}| \extl(xA) \leq |\sh{(x^{-1})}||\sh{x}|\extl(A) = \extl(A).$$
	By combining these results, we obtain that $\extl{(xA)}=|\sh{x}|\extl(A)$ for every $A \subseteq \F$ with $\extl(A) <+\infty$ and for every $x \in \F_{fin}$.
	
	The proof of $L$-measurability of the set $xA$ under the hypothesis that $A$ is $L$-measurable can be obtained with an argument analogous to that of Theorem 3.18 of \cite{Yeh}.
\end{proof}

\begin{example}
	If $A$ is a $L$-measurable set of an infinite measure, then positive homogeneity fails, since for $x \simeq 0$ it would lead to the indeterminate form $\loeb(xA) = |\sh{x}|\loeb(A) = 0 \cdot +\infty$.
	In fact, let $a \in \F$ be a positive infinite number and consider the set $A = [0,a]$.
	Then $\loeb(xA)$ can be either zero, any positive real number, or $+\infty$, depending upon the value of $x$.
	For instance, for every $r \in \F_{fin}$ $\loeb(ra^{-1}A)=\sh{r}$, $\loeb(ra^{-2}A)=0$ and $\loeb(ra^{-1/2}A)=+\infty$.
\end{example}

\begin{remark}
	The measure $\loeb$ shares many properties with the Lebesgue measure over $\R$.
	For instance, it is uniform, positively homogeneous and translation invariant over $\F$.
	Despite these similarities, $\loeb$ does not satisfy other relevant properties of the Lebesgue measure: for instance, it is not $\sigma$-finite, since $\F$ is not the union of countably many sets of a finite measure. Notice however that the restriction of $\loeb$ to $\F_{fin}$ is $\sigma$-finite.
	In addition, the complement of a null set needs not be a dense subset of $\F$ or $\F_{fin}$ (compare this property with the one discussed in Observation 3.7 of \cite{Yeh}), since $\F$ and $\F_{fin}$ are totally disconnected with respect to the topology induced by the metric \cite{payne}.
\end{remark}

\begin{remark}
	The outer measure $\extl$ and the corresponding measure $\loeb$ can be suitably rescaled.
	E.g. if one is interested in working with a measure that assigns length $1$ to the intervals of the form $I(0,\varepsilon)$, with either $\varepsilon \ll 1$ or $\varepsilon \gg 1$, then it is possible to assume the alternative definition $l_{\varepsilon}(I(a,b))=\sh{\left(\frac{b-a}{\varepsilon}\right)}$.
	The resulting measure
	$$
	{}_\varepsilon\extl(A) = \inf\left\{\sum_{n \in \N}l_{\varepsilon}(I_n) : A \subseteq \bigcup_{n\in\N} I_n \right\}
	$$
	would have the desired property ${}_\varepsilon\extl(I(0,\varepsilon))=1$.
	Consequently, the family of sets with a finite ${}_\varepsilon\extl$ outer measure can be interpreted as the family of sets whose measure is of the same magnitude as that of the intervals $I(0,\varepsilon)$.
	
	Many of the properties already proved for $\extl$ and $\loeb$, such as translation invariance and positive homogeneity as described in Proposition \ref{prop positive homogeneity}, are still valid for these rescaled measures.
\end{remark}

\subsection{Relation with the Lebesgue measure over $\R$}

We will now study the relation between the measure $\loeb$ over $\F$ and the Lebesgue measure over $\R$.
	Notice that Lebesgue measurable subsets of $\R$ are not in general $\loeb$-measurable in $\F$.
	Consider for instance the real intervals $A=[0,1]_\R$ and $B=[0,1]_\F$.
	Since
	$$1 = \extl([0,1]_\F) \ne \extl([0,1]_\R) + \extl([0,1]_\F \setminus [0,1]_\R) = 2,$$
	we conclude that $[0,1]_\R \not \in \mathfrak{C}$.

However, Lebesgue measurable sets over $\R$ have a canonical $L$-measurable representative in $\F$.
Moreover, the measure of this representative is equal to the Lebesgue measure of the original set.

We will prove this result at first by showing that, for every Lebesgue measurable set $A \subseteq \R$, the set $\st^{-1}(A)=\{x \in \F : \sh{x}\in A\}$ has outer measure equal to $\leb(A)$.
Then we will prove that if $A \subseteq \R$ is Lebesgue measurable, then $\st^{-1}(A)\in\mathfrak{C}$.
Notice that $\st^{-1}(A) \subseteq \F_{fin}$, since if $x \not \in \F_{fin}$, $\st(x) = \pm \infty \not \in A$ for every $A \subseteq \R$.

\begin{proposition}\label{prop coerenza outer measure e misura di lebesgue} 
	If $A \subseteq \R$ is Lebesgue measurable, then 
	${\leb(A)} = \extl(\st^{-1}(A))$.
\end{proposition}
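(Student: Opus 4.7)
The plan is to prove the two inequalities $\extl(\st^{-1}(A)) \le \leb(A)$ and $\leb(A) \le \extl(\st^{-1}(A))$ separately, in each direction transferring covers through the standard part map.

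For $\extl(\st^{-1}(A)) \le \leb(A)$, I would take, for each $\varepsilon > 0$, a countable cover of $A$ by open real intervals $\{(a_n, b_n)\}_{n \in \N}$ with $\sum_n (b_n - a_n) \le \leb(A) + \varepsilon$. Every $x \in \st^{-1}(A)$ satisfies $\sh{x} \in A$, so $a_n < \sh{x} < b_n$ strictly in $\R$ for some $n$; because $x - \sh{x}$ is infinitesimal and the endpoints are real, these strict inequalities persist in $\F$, so $x \in (a_n, b_n)_\F$. Since $b_n - a_n$ is a real number, $l_L((a_n, b_n)_\F) = \sh{(b_n - a_n)} = b_n - a_n$, whence $\extl(\st^{-1}(A)) \le \sum_n (b_n - a_n) \le \leb(A) + \varepsilon$; letting $\varepsilon \to 0$ yields the inequality.

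For the reverse inequality, I would start from any cover $\{I_n\}_{n \in \N}$ of $\st^{-1}(A)$ by bounded $\F$-intervals $I_n = I(a_n, b_n)$, and assume $b_n - a_n \in \F_{fin}$ for every $n$ (otherwise $\sum_n l_L(I_n) = +\infty$ and there is nothing to prove). Intervals $I_n$ disjoint from $\F_{fin}$ may be discarded, since $\st^{-1}(A) \subseteq \F_{fin}$; for each of the remaining intervals both endpoints lie in $\F_{fin}$, so the real interval $J_n = [\sh{a_n}, \sh{b_n}]_\R$ is well-defined and satisfies $\leb(J_n) = \sh{b_n} - \sh{a_n} = \sh{(b_n - a_n)} = l_L(I_n)$. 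Any $r \in A$ lies in $\st^{-1}(A)$ (since $\sh{r} = r$), hence in some $I_n$, so $a_n \le r \le b_n$ in $\F$ gives $\sh{a_n} \le r \le \sh{b_n}$ and thus $r \in J_n$. Therefore $A \subseteq \bigcup_n J_n$, so $\leb(A) \le \sum_n \leb(J_n) = \sum_n l_L(I_n)$, and taking the infimum over covers produces the desired bound. The main subtlety is the pair of WLOG reductions in this direction, which are what make the standard parts of the endpoints well-defined real numbers and make $\leb(J_n)$ match $l_L(I_n)$ exactly; once these are in place the proof is a direct translation of the Lebesgue covering definition through $\st$, and I do not expect any deeper obstacle.
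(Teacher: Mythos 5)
Your proof is correct and follows essentially the same route as the paper: both inequalities are obtained by transporting countable interval covers through the standard part map, using $l_L(I(a,b)) = \sh{(b-a)} = $ length of the shadow interval. The only differences are cosmetic --- you use open real intervals so that strict inequalities survive infinitesimal perturbation, where the paper enlarges closed intervals by $1/n$ and then invokes $\sigma$-subadditivity together with the interval case, and your explicit reductions (discarding intervals with $l_L = +\infty$ or disjoint from $\F_{fin}$) spell out edge cases the paper passes over more quickly.
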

\begin{proof}
	Consider an interval $[a,b]_\R$
	and, for all $n\in\N$, define the intervals $I(a-1/n, b+1/n) \subset \F$.
	We have $\st^{-1}([a,b]_\R) \subset I(a-1/n, b+1/n)$ and
	$$\extl(\st^{-1}([a,b]_\R)) \leq \loeb\left(I\left(a-\frac{1}{n}, b+\frac{1}{n}\right)\right) = b-a+\frac{2}{n}.$$
	Since $\inf_{n\in\N}\left\{ b-a+\frac{2}{n} \right\} = b-a$, $\extl(\st^{-1}(I(a,b))) \leq b-a$.
	Notice also that $I(a,b) \subset \st^{-1}([a,b]_\R)$, $I(a,b) \in \mathfrak{C}$ and $\loeb(I(a,b))=b-a$.
	Then, by monotonicity of the outer measure, $\extl(\st^{-1}([a,b]_\R))=b-a$.
	
	Consider now an arbitrary Lebesgue measurable set $A \subseteq \R$.	
	Recall that its Lebesgue measure $\leb(A)$ can be defined as
	$$
		\leb(A) = \inf\left\{\sum_{n \in \N}l(I_n) : A \subseteq \bigcup_{n\in\N} I_n \right\}.
	$$
	In the above formula, $I_n \subseteq \R$ for all $n \in\N$ and $l(I_n)$ is the usual length of the real interval $I_n$.
	%
	By definition, we have also
	$$
		\extl(\st^{-1}(A)) = \inf\left\{\sum_{n \in \N}l_L(J_n) : \st^{-1}(A) \subseteq \bigcup_{n\in\N} J_n \right\}.
	$$
	Consider now the real intervals $I_n = \{\sh{x} : x \in J_n\}$, and notice that if $\st^{-1}(A) \subseteq \bigcup_{n \in \N} J_n$, then $A \subseteq \bigcup_{n\in\N} I_n$.
	By definition of $l_L$, we have also
	$l_L(J_n) = l(I_n)
	$,
	so that
	$$
	\left\{\sum_{n \in \N}l_L(J_n) : \st^{-1}(A) \subseteq \bigcup_{n\in\N} J_n \right\}
	\subseteq
	\left\{\sum_{n \in \N}l(I_n) : A \subseteq \bigcup_{n\in\N} I_n \right\}.
	$$
	This inclusion entails the inequality $\leb(A)\leq \extl(\st^{-1}(A))$.
	
	In order to prove that the opposite inequality is also true, let $\varepsilon \in\R$, $\varepsilon >0$ and let $\{I_n\}_{n\in\N}$ satisfy
	\begin{itemize}
		\item $A \subseteq \bigcup_{n\in\N} I_n \subseteq \R$ and
		\item $\sum_{n \in \N}l(I_n) \leq \leb(A)+\varepsilon$.
	\end{itemize} 
	Then $\st^{-1}(A) \subseteq \bigcup_{n\in\N} \st^{-1}(I_n)$.
	By $\sigma$-subadditivty and monotonicity of $\extl$, we deduce
	$$
		\extl(\st^{-1}(A))
		\leq
		\extl\left(\bigcup_{n\in\N}\st^{-1}(I_n)\right)
		\leq
		\sum_{n \in\N} \extl(\st^{-1}(I_n)).
	$$
 	In the first part of the proof we have shown that $\loeb(\st^{-1}(I)) = l(I)$ for every real interval $I$.
	From this equality we deduce
	$\extl(\st^{-1}(A)) \leq \sum_{n \in \N}l(I_n) \leq \leb(A)+\varepsilon$.
	By the arbitrariness of the real parameter $\varepsilon$, we obtain $\extl(\st^{-1}(A)) \leq \leb(A)$, as desired.
\end{proof}

\begin{proposition}\label{prop A lebesgue implica st-1(A) loeb} 
	If $A \subseteq \R$ is Lebesgue measurable, then $\st^{-1}(A)
	\in\mathfrak{C}$.
\end{proposition}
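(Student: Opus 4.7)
The plan is to apply the sandwich criterion of Lemma \ref{lemma A c B c C}, locating $L$-measurable sets $A' \subseteq \st^{-1}(A) \subseteq A''$ of the same finite measure. After a routine reduction to the case $\leb(A) < +\infty$, I would produce $A'$ and $A''$ by applying $\st^{-1}$ to an $F_\sigma$ inner approximation and a $G_\delta$ outer approximation of $A$ in $\R$, and then invoke Proposition \ref{prop coerenza outer measure e misura di lebesgue} to see that the two bracketing sets have the same real measure.

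First I would verify that $\st^{-1}$ of a real interval lies in $\mathfrak{C}$. The key identity is
\[
\st^{-1}([a,b]_\R) \;=\; \bigcap_{n\in\N}\, [a-1/n,\, b+1/n]_\F,
\]
obtained by unpacking Definition \ref{def sh}: $\sh{x} \ge a$ iff no real $r < a$ satisfies $x \le r$, i.e.\ iff $x \ge a - 1/n$ for every $n \in \N$; dually for $\sh{x} \le b$. Each interval of $\F$ is $L$-measurable (indeed $\extl(I(c,d)) = \sh{(d-c)}$ by Definition \ref{def outer}, and the Carathéodory criterion is straightforward for intervals), so the countable intersection lies in $\mathfrak{C}$. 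The analogous identities $\st^{-1}((a,b)_\R) = \bigcup_n [a+1/n,\, b-1/n]_\F$ (together with the obvious variants for half-open and unbounded intervals) handle the remaining cases, so $\st^{-1}$ sends any countable combination of real intervals — in particular any $F_\sigma$ or $G_\delta$ subset of $\R$ — into $\mathfrak{C}$.

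Now suppose $\leb(A) < +\infty$. By regularity of the Lebesgue measure, choose $F \subseteq A \subseteq G$ with $F$ an $F_\sigma$, $G$ a $G_\delta$, and $\leb(F) = \leb(A) = \leb(G)$. Then $\st^{-1}(F) \subseteq \st^{-1}(A) \subseteq \st^{-1}(G)$, both endpoints lie in $\mathfrak{C}$ by the previous paragraph, and Proposition \ref{prop coerenza outer measure e misura di lebesgue} gives $\loeb(\st^{-1}(F)) = \loeb(\st^{-1}(G)) = \leb(A) < +\infty$. Lemma \ref{lemma A c B c C} then places $\st^{-1}(A)$ in $\mathfrak{C}$. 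For the general case $\leb(A) = +\infty$, write $A = \bigcup_{k\in\N} (A \cap [-k,k]_\R)$; each piece is Lebesgue measurable of finite measure, and $\st^{-1}$ commutes with countable unions, so $\st^{-1}(A) \in \mathfrak{C}$ by $\sigma$-algebra closure. The main point requiring care is the interval identity in the second paragraph — one must argue cleanly in the non-Archimedean setting that $\sh{x} \in [a,b]_\R$ is equivalent to $x \in [a-1/n,b+1/n]_\F$ for every $n$; once that step is secured, the sandwich and exhaustion arguments are structural.
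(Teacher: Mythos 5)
Your argument is correct, but it follows a genuinely different route from the paper's. The paper reduces to bounded $A$ and then verifies the Carath\'eodory splitting condition for $\st^{-1}(A)$ directly against an arbitrary test interval $I(a,b)\subseteq\F$ (invoking the criterion, Theorem 2.24 of \cite{Yeh}, that it suffices to test on the covering class of intervals), getting the nontrivial inequality by applying Proposition \ref{prop coerenza outer measure e misura di lebesgue} to $A\cap[a,b]_\R$ and $[a,b]_\R\setminus A$ together with monotonicity and subadditivity of $\extl$. You instead stay entirely within the paper's own toolkit: you show $\st^{-1}$ of a real interval is a countable intersection (or union) of $\F$-intervals, hence $L$-measurable --- consistent with the paper's own unproved assertion that $I(a,b)\in\mathfrak{C}$ --- then use inner/outer regularity of Lebesgue measure to bracket $A$ between an $F_\sigma$ and a $G_\delta$ of equal measure, transport the bracket through $\st^{-1}$, and finish with the completeness Lemma \ref{lemma A c B c C} plus Proposition \ref{prop coerenza outer measure e misura di lebesgue}; the infinite-measure case is handled by the same exhaustion the paper uses. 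What the paper's route buys is independence from regularity of $\leb$ and from any claim about preimages of Borel sets, at the price of citing the Yeh measurability criterion; your route buys a proof assembled from lemmas already established in the paper, at the price of the regularity input and one point of care: an $F_\sigma$ set is not literally a countable union of intervals (closed sets are countable intersections of complements of open intervals), so either use your unbounded-interval variants to write closed sets as countable intersections of unions of two closed half-lines, or note that $\{S\subseteq\R : \st^{-1}(S)\in\mathfrak{C}\}$ is a $\sigma$-algebra containing the intervals (since $\st^{-1}(\R)=\F_{fin}=\bigcup_{n\in\N}[-n,n]_\F\in\mathfrak{C}$), hence contains all Borel sets. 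With that detail made explicit, the proof is complete.
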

\begin{proof}
	By $\sigma$-additivity of the measure $\loeb$, it is sufficient to prove that for every bounded Lebesgue measurable set $A \subseteq\R$, $\st^{-1}(A)\in\mathfrak{C}$.
	Once we have proven this result, the fact that $\st^{-1}(A)\in\mathfrak{C}$ for every Lebesgue measurable $A \subseteq \R$ can be obtained by the fact that $\mathfrak{C}$ is closed under countable unions.
	For this reason, in the sequel of the proof we will suppose that $A$ is bounded.
	
	By definition of the outer measure $\extl$ and by Theorem 2.24 of \cite{Yeh}, it is sufficient to prove that, if $A\subseteq\R$ is Lebesgue measurable, then
	\begin{equation}\label{eqn measurable}
		\sh{(b-a)}=\loeb(I(a,b))=\extl(\st^{-1}(A)\cap I(a,b))+\extl(I(a,b)\setminus \st^{-1}(A))
	\end{equation}
	for every $I(a,b) \subseteq \F$ with $a, b \in \F$, $a \leq b$.
	
	Recall also that, by subadditivity of the outer measure $\extl$, the inequality
	$$
		\sh{(b-a)}\leq \extl(\st^{-1}(A)\cap I(a,b))+\extl(I(a,b)\setminus \st^{-1}(A))
	$$
	is always satisfied, so we only need to prove the opposite inequality under the additional hypothesis that $\sh{(b-a)}<+\infty$.
	
	Notice that $[a,b]_\R$ is an interval, so it is Lebesgue measurable.
	The hypothesis that $A$ is Lebesgue measurable ensures then that $A \cap [a,b]_\R$ and $[a,b]_\R \setminus A$ are Lebesgue measurable subsets of $\R$.
	Applying Proposition \ref{prop coerenza outer measure e misura di lebesgue} we obtain
	$$
	\extl(\st^{-1}(A\cap [a,b]_\R) = \leb(A \cap [a,b]_\R)
	$$
	and
	$$
	\extl(\st^{-1}([a,b]_\R \setminus A)) = \leb([a,b]_\R \setminus A).
	$$
	Since $\st^{-1}(A)\cap I(a,b) \subseteq \st^{-1}(A\cap  [a,b]_\R)$ and $I(a,b)\setminus \st^{-1}(A) \subseteq \st^{-1}([a,b]_\R \setminus A)$, by monotonicity of the outer measure we have
	$$
	\extl(\st^{-1}(A)\cap I(a,b))\leq\extl(\st^{-1}(A\cap [a,b]_\R)) = \leb(A \cap [a,b]_\R)
	$$
	and
	$$
	\extl(I(a,b)\setminus \st^{-1}(A))\leq \extl(\st^{-1}([a,b]_\R \setminus A)) = \leb([a,b]_\R \setminus A).
	$$
	Putting together the two inequalities, we obtain
	$$
		\extl(\st^{-1}(A)\cap I(a,b))+\extl(I(a,b)\setminus \st^{-1}(A))
		\leq
		\leb(A \cap [a,b]_\R) + \leb([a,b]_\R \setminus A) = \leb([a,b]_\R)=\sh(b-a).
	$$
	
	The above inequality is sufficient to conclude that equality \eqref{eqn measurable} is satisfied for every $I(a,b) \subseteq \F$ with $a, b \in \F$, $a \leq b$.
	As we argued in the beginning of the proof, this is sufficient to entail that $\st^{-1}(A)\in\mathfrak{C}$, as desired.
\end{proof}

\begin{theorem}\label{thm coerenza L-meas e misura di lebesgue} 
	If $A \subseteq \R$ is Lebesgue measurable, then the set $\st^{-1}(A)=\{x \in \F : \sh{x}\in A\}$ is $L$-measurable, and $\leb(A) = \loeb(\st^{-1}(A))$.
\end{theorem}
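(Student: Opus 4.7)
The theorem is essentially a packaging of the two preceding propositions, so my plan is to simply assemble them rather than re-prove anything. First I would observe that Proposition \ref{prop A lebesgue implica st-1(A) loeb} already supplies the $L$-measurability half of the conclusion: if $A \subseteq \R$ is Lebesgue measurable, then $\st^{-1}(A) \in \mathfrak{C}$. Once we know $\st^{-1}(A)$ is $L$-measurable, its $\loeb$-measure coincides by definition with its outer measure $\extl(\st^{-1}(A))$.

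Next, Proposition \ref{prop coerenza outer measure e misura di lebesgue} tells us $\leb(A) = \extl(\st^{-1}(A))$. Chaining these two facts gives
\[
\loeb(\st^{-1}(A)) = \extl(\st^{-1}(A)) = \leb(A),
\]
which is exactly the measure equality claimed in the theorem.

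So the entire proof is a two-line invocation of the two earlier propositions, and there is no substantive obstacle: all the work (the interval approximation argument for the outer measure equality, and the verification of the Carathéodory criterion via restricting to the bounded case and using the Lebesgue measurability of $A \cap [a,b]_\R$) has already been carried out in Propositions \ref{prop coerenza outer measure e misura di lebesgue} and \ref{prop A lebesgue implica st-1(A) loeb}. If anything, the only small thing worth noting explicitly is that once $\st^{-1}(A) \in \mathfrak{C}$ the outer measure coincides with $\loeb$ by definition of the Carathéodory restriction, so no separate argument is needed to pass from $\extl$ to $\loeb$.
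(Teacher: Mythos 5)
Your proposal is correct and is essentially identical to the paper's own proof: the paper likewise cites Proposition \ref{prop A lebesgue implica st-1(A) loeb} for $\st^{-1}(A)\in\mathfrak{C}$, notes that consequently $\extl(\st^{-1}(A))=\loeb(\st^{-1}(A))$, and then invokes Proposition \ref{prop coerenza outer measure e misura di lebesgue} to conclude $\loeb(\st^{-1}(A))=\leb(A)$. No gaps.
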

\begin{proof}
	By Proposition \ref{prop A lebesgue implica st-1(A) loeb}, if $A\subseteq \R$ is Lebesgue measurable, then $\st^{-1}(A)\in\mathfrak{C}$.
	As a consequence, $\extl(\st^{-1}(A))=\loeb(\st^{-1}(A))$.
	By Proposition \ref{prop coerenza outer measure e misura di lebesgue}, $\extl(\st^{-1}(A))=\leb(A)$, so that also $\loeb(\st^{-1}(A))=\leb(A)$.
\end{proof}

Conversely, a $L$-measurable subset of $\F_{fin}$ corresponds via the standard part function to a Lebesgue measurable subset of $\R$. In other words, the standard part function is measure-preserving.

\begin{theorem}\label{thm coerenza L-meas e misura di lebesgue2}
	If $A \subseteq \F_{fin}$ is $L$-measurable, then the set $\sh A=\{\sh{x} \in \R : x\in A\}$ is Lebesgue measurable, and $\loeb(A) = \leb(\sh{A})$.
\end{theorem}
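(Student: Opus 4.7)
My plan is to first establish the outer–measure identity $\extl(\st^{-1}(\sh{A})) = \loeb(A)$ (or equivalently, Lebesgue outer measure $\leb^*(\sh{A}) = \loeb(A)$), and then upgrade this to Lebesgue measurability by sandwiching $\sh{A}$ between two Lebesgue measurable sets of the same measure.

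For the outer–measure equality, I would argue both inequalities directly from the definition. For $\leb^*(\sh{A}) \leq \loeb(A)$, given any countable cover of $A$ by bounded intervals $I_n = I(a_n,b_n) \subseteq \F$, I would push the cover down to $\R$ via $I_n \mapsto [\sh{a_n},\sh{b_n}]_{\R}$ (discarding intervals whose endpoints are infinite, which cannot help when $A \subseteq \F_{fin}$); these real intervals cover $\sh{A}$, and by definition of $l_L$ the sum of real lengths equals $\sum l_L(I_n)$. For $\loeb(A) \leq \leb^*(\sh{A})$, I would take any cover of $\sh{A}$ by real intervals $\{I_n\}$ and observe $A \subseteq \st^{-1}(\sh{A}) \subseteq \bigcup \st^{-1}(I_n)$; then Proposition \ref{prop coerenza outer measure e misura di lebesgue} gives $\extl(\st^{-1}(I_n)) = \leb(I_n) = l(I_n)$, so $\sigma$–subadditivity of $\extl$ transfers the bound.

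The main difficulty is measurability of $\sh{A}$ in $\R$, since we only know that $A$ is $L$–measurable, and the standard part map is far from injective. To handle this I would first reduce to the bounded case $A \subseteq I(-N,N)_{\F}$, where $\loeb(A) < \infty$. Apply the outer–measure step both to $A$ and to its complement $A' := I(-N,N)_{\F} \setminus A$, which is also $L$–measurable with $\loeb(A') = 2N - \loeb(A)$. By regularity of the Lebesgue outer measure, choose Lebesgue measurable sets $B_1 \supseteq \sh{A}$ and $B_2 \supseteq \sh{A'}$ with $\leb(B_1) = \loeb(A)$ and $\leb(B_2) = \loeb(A')$, intersected with $[-N,N]_{\R}$ if needed. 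The key identity $\sh{A} \cup \sh{A'} = [-N,N]_{\R}$ then forces $\leb(B_1) + \leb(B_2) = 2N = \leb(B_1 \cup B_2)$, hence $\leb(B_1 \cap B_2) = 0$. Setting $E_1 := [-N,N]_{\R} \setminus B_2$, we have $E_1 \subseteq \sh{A} \subseteq B_1$ with both of Lebesgue measure $\loeb(A)$, and completeness of the Lebesgue measure yields $\sh{A} \in \mathcal{L}$ with $\leb(\sh{A}) = \loeb(A)$.

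Finally, to remove the boundedness hypothesis, I would write $A = \bigcup_{N \in \N}(A \cap I(-N,N)_{\F})$ and apply the bounded case to each piece. Then $\sh{A} = \bigcup_{N \in \N}\sh{(A \cap I(-N,N)_{\F})}$ is a countable union of Lebesgue measurable sets, and continuity from below for both $\leb$ and $\loeb$ (noting that the sets $A \cap I(-N,N)_{\F}$ are increasing and $L$–measurable) delivers $\leb(\sh{A}) = \loeb(A)$, possibly as $+\infty$. The trickiest step to get right will be the sandwich argument: ensuring that the complement $A'$ is taken inside a $\loeb$–finite ambient interval so that the subtraction $2N - \loeb(A)$ is meaningful, which is why the reduction to the bounded case must come first.
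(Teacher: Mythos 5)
Your proposal is correct, and its overall architecture (push interval covers down to $\R$ for one inequality, reduce to a bounded ambient interval, play $A$ against its relative complement, exhaust $\F_{fin}$ at the end) matches the paper's, but the two key steps are carried out differently. For the inequality $\loeb(A)\leq\overline{\leb}(\sh{A})$ you pull real covers back through $\st^{-1}$ and invoke Proposition \ref{prop coerenza outer measure e misura di lebesgue} on the covering intervals, which yields the outer-measure identity for every $L$-measurable $A\subseteq\F_{fin}$ with no boundedness assumption; the paper instead obtains this inequality only for bounded $A$, via the complement trick (the sum $\overline{\leb}(\sh{A})+\overline{\leb}(\sh{A^c})$, with $A^c=[-n,n]_\F\setminus A$, is squeezed between $\overline{\leb}([-n,n]_\R)=2n$ and $\loeb(A)+\loeb(A^c)=2n$). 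For measurability of $\sh{A}$ the paper verifies the Caratheodory condition against real test intervals, transferring the condition that $A$ satisfies in $\F$; you instead take Lebesgue measurable hulls $B_1\supseteq\sh{A}$ and $B_2\supseteq\sh{A'}$, force $\leb(B_1\cap B_2)=0$ from $\sh{A}\cup\sh{A'}=[-N,N]_\R$, and sandwich $\sh{A}$ between two measurable sets of the same finite measure, concluding by completeness of the Lebesgue measure. Both routes are sound: yours leans on regularity and completeness of Lebesgue measure, the paper's on the Caratheodory criterion, and your version of the outer-measure step is slightly more economical. One small point to tighten: in the first inequality you cannot simply ``discard intervals whose endpoints are infinite,'' since an interval such as $[-\omega,\omega]_\F$ with $\omega$ infinite does help cover $A$; the correct observation is that any cover containing an interval that meets $\F_{fin}$ and has an infinite endpoint has total $l_L$-length $+\infty$, so such covers satisfy the desired bound trivially, while intervals disjoint from $\F_{fin}$ may indeed be dropped.
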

\begin{proof}
	Notice that for every $A \subseteq \F_{fin}$, if 
	$
		A \subseteq \bigcup_{n \in \N} I_n,
	$
	then
	$
		\sh{A} \subseteq \bigcup_{n \in \N} \sh{I_n},
	$
	and $\sh{I_n}$ is a closed interval or a singleton for every $n\in\N$.
	Consequently, if we denote by $\overline{\leb}$ the Lebesgue outer measure over $\R$, $\overline{\leb}(\sh{A}) \leq \loeb(A)$.
	
	As in the proof of Proposition \ref{prop A lebesgue implica st-1(A) loeb}, we will consider at first only sets included in an interval of a finite length. The desired result for arbitrary $L$-measurable sets can then be obtained by $\sigma$-additivity of the measures $\loeb$ and $\leb$.
	
	If $A \subseteq [-n,n]_{\F}$ is $L$-measurable, then we have
	$$
		\overline{\leb}(\sh{A}) + \overline{\leb}(\sh{A^c}) \leq \loeb(A)+ \loeb(A^c) = 2n.
	$$
	However, countable subadditivity of the outer measure $\overline{\leb}$ implies that
	$$
		\overline{\leb}([-n,n]_{\R}) \leq \overline{\leb}(\sh{A}) + \overline{\leb}(\sh{A^c}).
	$$
	Putting together both inequalities, we conclude that $\overline{\leb}(\sh{A}) + \overline{\leb}(\sh{A^c})=2n$ and, taking into account that $\overline{\leb}(\sh{B}) \leq \loeb(B)$ for every $B \subseteq \F_{fin}$, we conclude that
	$\overline{\leb}(\sh{A}) = \loeb(A)$ for every $L$-measurable set $A \subseteq [-n,n]$.
	
	In order to prove that, if $A \subseteq [-n,n]_{\F}$ is $L$-measurable, then it is also Lebesgue measurable, we will prove that
	$$
		b-a = \overline{\leb}(I(a,b)) = \overline{\leb}(\sh{A}\cap I(a,b)) + \overline{\leb}(I(a,b)\setminus \sh{A})
	$$
	for every $I(a,b)\subseteq \R$, with $a, b\in\R$, $a\leq b$.
	Since $A$ is $L$-measurable, it satisfies the Caratheodory condition
	$$
		\sh(b-a) = \loeb(A\cap I(a,b)) + \loeb(I(a,b)\setminus A)
	$$
	for every $I(a,b) \subseteq \F$ with $a, b \in \F$, $a \leq b$.
	Notice also that $\sh{\left(A \cap I(a,b) \right)} = \sh{A} \cap [\sh{a},\sh{b}]_{\R}$
	and $\sh{\left( I(a,b)\setminus A \right)} 
	\supseteq [\sh{a},\sh{b}] \setminus \sh{A}$.
	By the previous part of the proof, if $a, b \in \R$ we have
	$$
		\overline{\leb}(\sh{A}\cap [a,b]_\R) = \loeb(A \cap [a,b]_{\F}) \text{ and } \overline{\leb}([a,b]_{\R} \setminus \sh{A}) \leq \loeb([a,b]_{\F} \setminus A).
	$$
	Taking into account that $A$ satisfies the Caratheodory measurability condition over $\F$, we obtain
	$$
		\overline{\leb}(\sh{A}\cap I(a,b)) + \overline{\leb}(I(a,b)\setminus \sh{A})
		\leq
		\loeb(A \cap [a,b]_{\F}) + \loeb([a,b]_{\F} \setminus A)
		= b-a,
	$$
	as desired.
	
	Thus we have proved that for every bounded $L$-measurable set $A \subseteq \F_{fin}$, $\sh{A}$ is Lebesgue measurable and $\loeb(A) = \leb(\sh{A})$.
	For an arbitrary $L$-measurable set $A \subseteq \F_{fin}$, we have already argued that the desired result can be obtained from $\sigma$-additivity of the measures $\loeb$ and $\leb$.
\end{proof}

\subsection{Relation with the Loeb measure on hyperreal fields}
If $\F$ is a {sufficiently saturated} field of hyperreal numbers of Robinson's framework of analysis with infinitesimals, so that $\leb_L$, the Loeb measure associated to the real Lebesgue measure, can be defined, then it is possible to study relation between $\loeb$ and ${\leb}_L$.
From Theorem \ref{thm coerenza L-meas e misura di lebesgue}, we can already conclude that both measures agree on the preimage of Lebesgue measurable subsets of $\R$ via the standard map function.

However the two measures are different: consider for instance an infinite hypernatural number $N$ and the set $A = \bigcup_{n=1}^N [n,n+N^{-1}]_{\hR}$.
Then $\leb_L(A) = N \cdot N^{-1} = 1$.
Notice that $A \not \subseteq \hR_{fin}$ and that $A$ is a hyperfinite union of intervals of an infinitesimal length.
Recall that hyperfinite subsets in Robinson's framework of analysis with infinitesimals have uncountable external cardinality (while, by definition, they have finite internal cardinality, since they can be put in an internal bijection with an internal initial segment of $\ns{\N}$. For more details on the distinction between internal and external cardinality, we refer to \cite{go}).
As a consequence, every countable sequence of intervals $\{I_n\}_{n\in\N}$ satisfying $A \subseteq \bigcup_{n \in \N} I_n$ must include at least one interval of an infinite length, so that $\extl(A) = +\infty$.
We deduce that either $A \not \in \mathfrak{C}$ or $\loeb(A)=+\infty$.

Despite these differences, the measure $\loeb$ is compatible with the Loeb measure ${\leb}_L$ over $\hR_{fin}$ in the sense that if a subset of $\hR_{fin}$ is $L$-measurable, then it is also $\leb_L$-measurable and the two measures coincide.


\begin{theorem}
	Let $F=\hR$, a {sufficiently saturated} field of hyperreal numbers, and denote by $\mathfrak{L}$ the $\sigma$-algebra of Loeb measurable subsets of $\hR$.
	Then for every $A \subseteq \hR_{fin}$, if $A \in \mathfrak{C}$ then also $A \in \mathfrak{L}$. Moreover, $\loeb(A) = \leb_L(A)$.	
\end{theorem}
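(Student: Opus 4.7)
The plan is first to show $A \in \mathfrak{L}$ by trapping $A$ between sets that lie in both $\mathfrak{C}$ and $\mathfrak{L}$, and then to deduce the numerical equality $\loeb(A) = \leb_L(A)$ by routing through the shadow map. A standard reduction by $\sigma$-additivity (write $A = \bigcup_n (A \cap [-n,n]_{\hR})$ and use continuity from below of both measures) lets me assume $\loeb(A) < \infty$ throughout.

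The core observation is that every bounded interval $I(a,b) \subseteq \hR$ is internal and satisfies $\leb_L(I(a,b)) = \sh{\ns{\leb}(I(a,b))} = \sh{(b-a)} = l_L(I(a,b))$. For each $k \in \N$ I would fix a countable cover $A \subseteq \bigcup_n I_n^{(k)}$ by bounded intervals with $\sum_n l_L(I_n^{(k)}) < \loeb(A) + 1/k$, and set $B_k = \bigcup_n I_n^{(k)}$, $B = \bigcap_k B_k$. The set $B$ lies in $\mathfrak{L}$ (being a countable intersection of countable unions of internal sets) and in $\mathfrak{C}$ (since intervals are $L$-measurable), and $A \subseteq B$ with $\leb_L(B) \leq \sum_n l_L(I_n^{(k)})$ and $\loeb(B) \leq \sum_n l_L(I_n^{(k)})$ for every $k$, hence $\leb_L(B) \leq \loeb(A)$ and $\loeb(B) = \loeb(A)$. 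Since $\loeb(A)$ is finite, additivity of $\loeb$ on $\mathfrak{C}$ then yields $\loeb(B \setminus A) = 0$. Applying the same cover construction to $B \setminus A$ produces a set in $\mathfrak{L}$ of Loeb measure zero containing $B \setminus A$, and completeness of $\leb_L$ forces $B \setminus A \in \mathfrak{L}$. Hence $A = B \setminus (B \setminus A) \in \mathfrak{L}$, with $\leb_L(A) = \leb_L(B) \leq \loeb(A)$.

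The main obstacle is the reverse inequality $\loeb(A) \leq \leb_L(A)$: an internal outer cover of $A$ witnessing a near-optimal value of $\leb_L(A)$ need not be translatable into a countable interval cover of comparable total length, so a direct comparison of outer measures only yields $\leb_L \leq \loeb$. I would sidestep this by pushing both measures down to $\R$ via the standard part. By Theorem \ref{thm coerenza L-meas e misura di lebesgue2}, the shadow $\sh{A} \subseteq \R$ is Lebesgue measurable with $\leb(\sh{A}) = \loeb(A)$; and by Theorem \ref{thm coerenza L-meas e misura di lebesgue}, $\st^{-1}(\sh{A}) \in \mathfrak{C}$ with $\loeb(\st^{-1}(\sh{A})) = \leb(\sh{A})$. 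Since $A \subseteq \st^{-1}(\sh{A})$ and both sets have the same finite $\loeb$-measure, $\loeb(\st^{-1}(\sh{A}) \setminus A) = 0$, and the construction of the previous paragraph upgrades this to $\leb_L(\st^{-1}(\sh{A}) \setminus A) = 0$. Combined with the classical Anderson identity $\leb_L(\st^{-1}(E)) = \leb(E)$ for every Lebesgue measurable $E \subseteq \R$, this yields $\leb_L(A) = \leb_L(\st^{-1}(\sh{A})) = \leb(\sh{A}) = \loeb(A)$, completing the proof.
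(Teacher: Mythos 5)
Your argument is correct, but it follows a genuinely different route from the paper. The paper proves the theorem with Dynkin's $\pi$--$\lambda$ theorem: it treats the bounded intervals inside $[-n,n]_{\hR}$ as a $\pi$-system generating $\mathfrak{C}\cap\P([-n,n]_{\hR})$, checks that the sets on which $\loeb$ and $\leb_L$ agree form a $\lambda$-system, and then patches the bounded pieces together by $\sigma$-additivity. You instead argue by outer regularity and completeness: you sandwich $A$ between countable interval covers $B=\bigcap_k B_k\in\mathfrak{C}\cap\mathfrak{L}$ with $\loeb(B\setminus A)=0$, upgrade $\extl$-null sets to Loeb-null sets by the same covering trick, and conclude $A\in\mathfrak{L}$ from completeness of $\leb_L$; the numerical equality is then routed through the shadow, using Theorems \ref{thm coerenza L-meas e misura di lebesgue2} and \ref{thm coerenza L-meas e misura di lebesgue} together with the classical identity $\leb_L(\st^{-1}(E))=\leb(E)$ for Lebesgue measurable $E\subseteq\R$. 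What your route buys: it establishes the membership $A\in\mathfrak{L}$ directly, with an outer-regularity statement as a byproduct, and it does not depend on the paper's claim that $\mathfrak{C}$ is the $\sigma$-algebra \emph{generated} by countable unions of intervals (a point the Caratheodory construction does not literally give, since $\mathfrak{C}$ is complete and a priori larger); what it costs is the importation of the external fact $\leb_L(\st^{-1}(E))=\leb(E)$ (Anderson/Loeb), which requires the saturation hypothesis -- though the paper itself already appeals to exactly this fact, without citation, in the sentence preceding the theorem, so your use of it is consistent with the paper's framework. The paper's Dynkin argument, by contrast, is self-contained relative to its own two coherence theorems and needs no standard-part identity for $\leb_L$, at the price of the generation claim and a slightly heavier bookkeeping with the families $\mathcal{C}_n$, $\mathcal{E}_n$. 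Both reductions to the finite-measure case (your truncation by $[-n,n]_{\hR}$, the paper's annuli) are equivalent.
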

\begin{proof}
	Recall that, by the Caratheodory's extension theorem,
	$\mathfrak{C}$ is the smallest $\sigma$-algebra containing the family
	$$
	\mathcal{C}=\left\{ \bigcup_{n \in \N} I_n : \{I_n\}_{n\in\N} \text{ is a sequence of pairwise disjoint bounded intervals of } \hR \right\}.
	$$
	
	By definition of $l_L$, for every interval $I\subseteq \hR$ we have $l_L(I)=\leb_L(I)$, so that for every sequence of bounded intervals $\{I_n\}_{n\in\N}$ we have the equality
	$$
	\sum_{n \in \N}l_L(I_n) = \sum_{n\in\N} \leb_L(I_n).
	$$
	Moreover, $\bigcup_{n \in \N} I_n \in \mathfrak{L}$ and
	\begin{equation}\label{eqn coherence loeb 1}
		\leb_L\left(\bigcup_{n \in \N}I_n \right) = \sum_{n \in \N}l_L(I_n) = \loeb\left(\bigcup_{n \in \N}I_n \right).
	\end{equation}
	
	For every $n \in\N$, define now the following families of subsets of $\hR_{fin}$.
	\begin{itemize}
		\item $\mathcal{C}_{n}=\mathcal{C} \cap \P([-n,n]_{\hR})$;
		\item $\mathcal{C}_{fin}=\mathcal{C} \cap \P(\hR_{fin})$;
		\item $\mathfrak{C}_{n} = \mathfrak{C} \cap \P([-n,n]_{\hR})$;
		\item $\mathfrak{C}_{fin} = \mathfrak{C} \cap \P(\hR_{fin})$;
		\item $\mathcal{E}_{n} = \left\{ A \in \mathfrak{C} \cap \mathcal{C} : A \subseteq [-n,n]_{\hR} \text{ and } \leb_L(A) = \loeb(A) \right\}$; and
		\item $\mathcal{E}_{fin} = \left\{ A \in \mathfrak{C} \cap \mathcal{C} : A \subseteq {\hR}_{fin} \text{ and } \leb_L(A) = \loeb(A) \right\}$.
	\end{itemize}
	Notice that we are not assuming that the members of any of the above families must be internal.
	By equation \eqref{eqn coherence loeb 1}, $\leb_L$ and $\loeb$ assume the same values on elements of $\mathcal{C}_{fin}$. In addition, we have the inclusions $\mathcal{C}_{n} \subseteq \mathcal{E}_{n}$ for every $n\in\N$.

	In order to prove that for every
	$A \in \mathfrak{C}_{fin}$ then also $A \in \mathcal{E}_{fin}$, i.e.\ that $A$ is Loeb measurable and $\loeb(A) = \leb_L(A)$,
	we will prove that $\mathfrak{C}_{n} \subseteq \mathcal{E}_{n}$ for all $n \in\N$ by using Dinkyn's $\pi$--$\lambda$ theorem.
	The desired result can then be obtained by noticing that, by $\sigma$-additivity,
	\begin{eqnarray*}
		\leb_L(A) &=& \displaystyle \sum_{n \in \N} \leb_L\left( A \cap \left([-n-1,-n] \cup [n,n+1] \right) \right) \text{ and}\\
		\loeb(A) &=& \displaystyle \sum_{n \in \N} \loeb\left( A \cap \left([-n-1,-n] \cup [n,n+1] \right) \right),
	\end{eqnarray*}
	and that the inclusion $\mathcal{C}_{n} \subseteq \mathcal{E}_{n}$ for all $n \in\N$ entails that
	$$
		\leb_L\left( A \cap \left([-n-1,-n] \cup [n,n+1] \right) \right) = \loeb\left( A \cap \left([-n-1,-n] \cup [n,n+1] \right) \right)
	$$
	for all $n \in\N$, so that also $\leb_L(A) = \loeb(A)$.

	Recall that a $\pi$-system over a set $X$ is a family of subsets of $X$ closed under finite intersections, and a $\lambda$-system over $X$ is a family of subsets of $X$ that
	\begin{enumerate}
		\item contains the empty set;
		\item is closed under complements;
		\item is closed under countable disjoint unions.
	\end{enumerate}
	It is easy to see that $\mathcal{C}_{n}$ is a $\pi$-system for every $n \in\N$.
	We now want to prove that $\mathcal{E}_n = \left\{ A \subseteq \hR_{fin} : \leb_L(A) = \loeb(A) \right\}$
	is a $\lambda$-system for every $n \in\N$. 
	\begin{enumerate}
		\item Clearly $\emptyset \in \mathcal{E}_{n}$, since $\emptyset \subseteq [-n,n]_{\hR}$ and $\leb_L(\emptyset)=\loeb(\emptyset)=0$.
		\item Suppose now that $\leb_L(A) = \loeb(A)$ for some $A \in \mathfrak{C} \cap \mathcal{C}$,  $A \subseteq [-n,n]_{\hR}$, and let $A^c = [-n,n]_{\hR} \setminus A$. Taking into account that $[-n,n] \in \mathfrak{C} \cap \mathcal{C}$, that $\leb_L([-n,n]_{\hR})=\loeb([-n,n]_{\hR}) = 2n$ for every $n \in \N$ and that $A$ and $A^c$ are disjoint, $\leb_L(A)+\leb_L(A^c)=\loeb(A)+\loeb(A^c)=2n$. Since we have assumed that $\leb_L(A)=\loeb(A)$, we have also $\leb_L(A^c)=\loeb(A^c)$, i.e.\ $A^c \in \mathcal{E}_n$, as desired.
		\item Suppose that $\{A_m\}_{m\in\N}$ is a sequence of pairwise disjoint sets in $\mathcal{E}_n$.
		By $\sigma$-additivity of the measures $\leb_L$ and $\loeb$, then we have
		\begin{eqnarray*}
			\leb_L\left( \bigcup_{m \in \N} A_m \right) &=& \displaystyle \sum_{m\in\N} \leb_L(A_m) \text{ and}\\
			\loeb\left( \bigcup_{m \in \N} A_m \right) &=& \displaystyle \sum_{m\in\N} \loeb(A_m).
		\end{eqnarray*}
		Since we have assumed that $\leb_L(A_m) = \loeb(A_m)$ for all $m \in \N$, then also $\bigcup_{m \in \N} A_m \in \mathcal{E}$, as desired.
	\end{enumerate}
	
	We have verified that for every $n \in\N$ $\mathcal{C}_{n}$ is a $\pi$-system, $\mathcal{E}_n$ is a $\lambda$-system, and $\mathcal{C}_{n} \subseteq \mathcal{E}_{n}$ for every $n\in\N$.
	Then Dinkyn's $\pi$--$\lambda$ theorem ensures that the $\sigma$-algebra generated by $\mathcal{C}_{n}$ is a subset of $\mathcal{E}_n$ for every $n\in\N$.
	However, the $\sigma$-algebra generated by $\mathcal{C}_n$ is $\mathfrak{C}_n$, so that $\mathfrak{C}_n \subseteq \mathcal{E}_n$ for every $n\in\N$, as desired.
\end{proof}

By translation invariance of the measure $\loeb$, a similar result applies also to subsets of the translates $x+\hR_{fin}$, $x \in \hR$.
However, the above result cannot be extended over supersets of $\hR_{fin}$ (or to supersets of its translates $x+\hR_{fin}$, $x \in \hR$), since the Dinkyn's $\pi$--$\lambda$ theorem can only be applied to finite or $\sigma$-finite measurable sets.

The difference between the two measures can be explained in terms of the model-theoretic notions used in their definitions. In fact, the Loeb measure $\leb_L$ relies heavily on the properties of star transform, on the notion of internal sets and on the transfer principle of Robinson's framework. Instead, the uniform measure $\loeb$ is defined from first principles and does not exploit the strength of these notions. This difference explains the greater versatility of the Loeb measures and their applicability to a variety of mathematical problems. On the other hand, an advantage of the measure $\loeb$ is that it can be defined even for those field extensions of $\R$ where there is no analogous of a star transform, of a transfer principle or of a notion of internal sets.

\subsection{The real-valued measure in higher dimension and a real-valued integral}\label{sec Loeb integral}

In this section we generalize the definition of the real-valued measure $\loeb$ to $\F^n$ for all $n\in\N$.

\begin{definition}
	We say that a \emph{bounded rectangle} in $\F^n$ is the product of $n$ bounded intervals in $\F$.
	If $R=I_1, \ldots, I_n$ is a bounded rectangle, define $\loeb^n(R)=\sh{\left(l(I_1)\cdot\ldots\cdot l(I_n) \right)}$.
	
	For every $A \subseteq \F^n$ such that there exists a sequence of bounded rectangles $\{R_n\}_{n\in\N}$ satisfying $A \subseteq \bigcup_{n \in \N} R_n$,
	define
	$$
	\extl^n(A) = \inf\left\{\sum_{n \in \N}\loeb^n(R_n) : A \subseteq \bigcup_{n\in\N} R_n \right\}.
	$$
	If for every sequence of bounded rectangles $\{R_n\}_{n\in\N}$ we have $A \not\subseteq \bigcup_{n \in \N} R_n$, define $\extl^n(A) = +\infty$.
\end{definition}

As with the one-dimensional set function $\extl$, $\extl^n$ is an outer measure for all $n\in\N$.
Consequently, one can define the $\sigma$-algebra of measurable subsets of $\F^n$.
The definition is analogous to that of the Lebesgue integral in dimension $n$ from the Lebesgue measure in dimension $n+1$, as exposed for instance in \cite{pugh}.

\begin{definition}
	Given the outer measure $\extl$ on $\F^n$, the following family is
	called the \emph{Caratheodory $\sigma$-algebra} associated to $\extl$:
	$$
	\mathfrak{C}(\F^n)\ =\
	\left\{A\subseteq\F:
	\extl^n(B)=\extl^n(B\cap A)+\extl^n(B\setminus A)
	\text{ for all }B\subseteq\F^n\right\}.$$
	If $A \in \mathfrak{C}(\F_n)$, we will say that $A$ is $L$-measurable.
\end{definition}

The family $\mathfrak{C}(\F^n)$ is a $\sigma$-algebra, and that the restriction of $\extl^n$ to $\mathfrak{C}(\F^n)$, that we will denote by $\loeb^n$, is a complete measure.
As we have seen for the one-dimensional measure, the real-valued measures $\loeb^n$ are translation invariant and positively homogeneous. Moreover, by adapting the proof of Theorem \ref{thm coerenza L-meas e misura di lebesgue}, we obtain that if $A \subseteq \R^n$ is a Lebesgue measurable set, then $\st^{-1}(A) \subseteq \F^n$ is $L$-measurable and $\loeb^n(\st^{-1}(A))=\leb^n(A)$.

The $n$-dimensional measures can also be used to define a real-valued integral for functions over $\F$.

\begin{definition}
	Let $A\subseteq \F^n$, be a $L$-measurable set and let $f: A \rightarrow \F$ be a non-negative function.
	We say that $f$ is $L$-integrable iff
	$$
		\epi(f) = \{ (x_1, \ldots, x_n, x_{n+1})\in \F^{n+1} : 0 \leq x_{n+1} \leq f(x_1, \ldots, x_n) \}
	$$
	is $L$-measurable and $\loeb^{n+1}(\epi(f))<+\infty$.
	If $f: A \rightarrow \F$ is a non-negative $\loeb^n$-measurable function, we define
	$$
		\int_A f \ d\loeb^n = \loeb^{n+1}(\epi(f)).
	$$
	
	We say that $f: A \rightarrow \F$ is $L$-integrable iff $f^+$ and $f^-$ are.
	If $f: A \rightarrow \F$ is a $L$-integrable function, we define
	$$
	\int_A f \ d\loeb^n = \int_A f^+ \ d\loeb^n - \int_A f^- \ d\loeb^n.
	$$
\end{definition}

Thanks to additivity and positive homogeneity of the measure $\loeb^{n+1}$, the integral is $\R$-linear.
Moreover, the linearity property can be extended in the same spirit as positive homogeneity (see Proposition \ref{prop positive homogeneity}).

\begin{proposition}\label{linearity of the real-valued integral}
	If $A\subset \F^n$ is a $L$-measurable set,
	then for every $L$-integrable functions $f$ and $g$ over $A$ and for every $x,y \in \F_{fin}$,
	$$
	\int_A (xf+yg) \ d \loeb^n = \sh{x}\int_A f \ d\loeb^n + \sh{y}\int_A g \ d \loeb^n.
	$$
\end{proposition}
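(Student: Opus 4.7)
The plan is to split the identity into two parts: the additivity $\int_A (h_1+h_2)\,d\loeb^n = \int_A h_1\,d\loeb^n + \int_A h_2\,d\loeb^n$, which is part of the $\R$-linearity announced just before the proposition, and the rescaling $\int_A (xf)\,d\loeb^n = \sh{x}\int_A f\,d\loeb^n$ for $x \in \F_{fin}$. Granting these, the proposition follows at once from $\int_A(xf+yg)\,d\loeb^n = \int_A xf\,d\loeb^n + \int_A yg\,d\loeb^n = \sh{x}\int_A f\,d\loeb^n + \sh{y}\int_A g\,d\loeb^n$.

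For the rescaling I would first reduce to $f \geq 0$ and $x > 0$ in $\F$. The decomposition $f = f^+ - f^-$ and the pointwise identities $(xf)^{\pm} = xf^{\pm}$ when $x \geq 0$ in $\F$ versus $(xf)^+ = |x|f^-$ and $(xf)^- = |x|f^+$ when $x < 0$ in $\F$, combined with $\sh{|x|} = |\sh{x}|$ and, in the latter case, $\sh{x} = -|\sh{x}|$, reduce every sign combination to the positive-scalar non-negative-function subcase.

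For that subcase, consider $T_x \colon \F^{n+1} \to \F^{n+1}$ defined by $T_x(y_1,\ldots,y_{n+1}) = (y_1,\ldots,y_n,xy_{n+1})$: it sends $\epi(f)$ bijectively to $\epi(xf)$, and every bounded rectangle $R = I_1\times\cdots\times I_{n+1}$ to the bounded rectangle $I_1\times\cdots\times I_n\times(xI_{n+1})$. Adapting the proof of Proposition \ref{prop positive homogeneity}, in any cover of $\epi(f)$ by bounded rectangles whose total $\extl^{n+1}$-mass is finite, each rectangle satisfies $l(I_1)\cdots l(I_{n+1}) \in \F_{fin}$, so $\sh{\bigl(x\cdot l(I_1)\cdots l(I_{n+1})\bigr)} = \sh{x}\cdot\sh{\bigl(l(I_1)\cdots l(I_{n+1})\bigr)}$ holds rectangle by rectangle. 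Taking infima over such covers yields $\extl^{n+1}(T_x(B)) \leq |\sh{x}|\extl^{n+1}(B)$; when $x$ is appreciable the reverse inequality follows by applying the same argument to $T_{x^{-1}}$, and when $x$ is infinitesimal the one-sided bound already gives $\extl^{n+1}(\epi(xf)) = 0 = \sh{x}\int_A f\,d\loeb^n$. The $L$-measurability of $T_x(\epi(f))$ is then obtained by the Caratheodory-splitting argument already invoked in Proposition \ref{prop positive homogeneity}.

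The main obstacle I anticipate is ensuring that the identity $\sh{(xy)} = \sh{x}\sh{y}$ is actually applicable when $y$ is the product of edge lengths of a rectangle in the cover: in general the standard part does not distribute over products of non-finite elements, but the hypothesis $\int_A f\,d\loeb^n < +\infty$ forces every rectangle relevant to the infimum defining $\extl^{n+1}(\epi(f))$ to have edge-length product in $\F_{fin}$, where the standard part is a ring homomorphism. This is precisely where the finite-integrability assumption on $f$ enters.
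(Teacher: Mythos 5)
Your rescaling half is sound, and it is essentially the ingredient the paper also uses: the identity $\int_A (xf)\,d\loeb^n=\sh{x}\int_A f\,d\loeb^n$ is exactly the higher-dimensional positive homogeneity of $\loeb^{n+1}$ (the analogue of Proposition \ref{prop positive homogeneity}), and your map $T_x$, the rectangle-by-rectangle use of $\sh{(xp)}=\sh{x}\,\sh{p}$ for $p\in\F_{fin}$, and the inversion trick for appreciable $x$ reproduce that argument correctly; your observation that $L$-integrability forces the relevant rectangle covers to have finite mass is also the right way to make the homomorphism step legitimate.

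The genuine gap is the additivity step, which you dispose of by saying it "is part of the $\R$-linearity announced just before the proposition." That announcement is an informal remark whose formal content is precisely the proposition you are proving, so the appeal is circular; and additivity of the \emph{measure} $\loeb^{n+1}$ does not by itself give $\int_A(f+g)\,d\loeb^n=\int_A f\,d\loeb^n+\int_A g\,d\loeb^n$, because $\epi(f+g)$ is not a disjoint union of $\epi(f)$ and $\epi(g)$. What is needed is that the sheared region $\left\{(x_1,\ldots,x_n,x_{n+1})\in\F^{n+1}: f(x_1,\ldots,x_n)\leq x_{n+1}\leq f(x_1,\ldots,x_n)+g(x_1,\ldots,x_n)\right\}$ has the same $\loeb^{n+1}$-measure as $\epi(g)$, even though the vertical shift varies with the base point and so is not a translation of $\F^{n+1}$. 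This is exactly what the paper's proof is devoted to: it adapts Theorem 16~(g) of Chapter 6, Section 4 of \cite{pugh}, first verifying the claim for step functions (where the shift is piecewise constant, so translation invariance and positive homogeneity of $\loeb^{n+1}$ apply on each piece) and then passing to arbitrary $L$-integrable $f$ and $g$ by approximating the three sets involved by countable unions of rectangles to arbitrary precision. Without some version of this shear-invariance argument your proof does not establish the proposition even for $x=y=1$; with it, your scalar analysis completes the statement for all $x,y\in\F_{fin}$.
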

\begin{proof}
	Once we prove that the set $\epi(g)$ has the same measure as the set $\{(x_1, \ldots, x_n, x_{n+1})\in\F^{n+1}: f(x_1, \ldots, x_n) \leq x_{n+1} \leq g(x_1, \ldots, x_n) \}$, linearity is a consequence of the definition of the integral and of positive homogeneity of the measure $\loeb^{n+1}$.
	
	The proof that the two sets have the same measure can be obtained by adapting the proof of Theorem 16 \textit{(g)} of Chapter 6, Section 4 of \cite{pugh}.
	If $f$ and $g$ are both step functions, i.e. if both are defined over an interval and they are piecewise constant over subintervals of their domain, then the desired assertion is a consequence of translation invariance and positive homogeneity of the measure $\loeb^{n+1}$.
	If $f$ and $g$ are arbitrary, the proof relies on the possibility to approximate the sets $\epi(f)$, $\epi(g)$ and $\{(x_1, \ldots, x_n, x_{n+1})\in\F^{n+1}: f(x_1, \ldots, x_n) \leq x_{n+1} \leq g(x_1, \ldots, x_n) \}$ by rectangles up to an arbitrary precision.
\end{proof}

The integral of a function defined on a domain of a finite measure is invariant by infinitesimal perturbations of the function.

\begin{proposition}\label{proposition simeq implies same integral}
	Let $A \subseteq \F^n$ be a $L$-measurable set with $\loeb^n(A) < +\infty$.
	If $f: A \rightarrow \F$ is $L$-integrable and if $g: A \rightarrow \F$ satisfies $f(x)\simeq g(x)$ for every $x \in A$.
	Then
	$$
	\int_{A} f \ d\loeb^n = \int_A g \ d \loeb^n.
	$$
\end{proposition}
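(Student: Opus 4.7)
The plan is to sandwich $\epi(g)$ between two $L$-measurable sets of equal $L$-measure and conclude via Lemma \ref{lemma A c B c C}. I would first reduce to the case $f, g \geq 0$: since $t \mapsto t^+$ is $1$-Lipschitz, $|f^\pm(x) - g^\pm(x)| \leq |f(x) - g(x)| \simeq 0$, so the hypothesis passes to the positive and negative parts, and Proposition \ref{linearity of the real-valued integral} reduces the general claim to the nonnegative case applied separately to $(f^+, g^+)$ and $(f^-, g^-)$.

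Assume now $f, g \geq 0$. For each $k \in \N$, set $F_k^+ = f + 1/k$ and $F_k^- = (f - 1/k)^+$. Because $|g(x) - f(x)|$ is infinitesimal, it is strictly smaller than the positive real $1/k$, so $F_k^- \leq g \leq F_k^+$ pointwise on $A$, hence $\epi(F_k^-) \subseteq \epi(g) \subseteq \epi(F_k^+)$. The sets $U = \bigcup_{k\in\N} \epi(F_k^-)$ and $V = \bigcap_{k\in\N} \epi(F_k^+)$ are then $L$-measurable and still satisfy $U \subseteq \epi(g) \subseteq V$. Using linearity (Proposition \ref{linearity of the real-valued integral}) together with the $L$-integrability of the constant function $1/k$ on $A$ (whose epigraph $A \times [0,1/k]$ has $L$-measure $(1/k)\loeb^n(A) < +\infty$), one gets $\loeb^{n+1}(\epi(F_k^\pm)) \to \int_A f\,d\loeb^n$ as $k \to \infty$, and by continuity from below and above
\[
\loeb^{n+1}(U) \;=\; \loeb^{n+1}(V) \;=\; \int_A f\,d\loeb^n.
\]
Lemma \ref{lemma A c B c C} now yields $\epi(g) \in \mathfrak{C}(\F^{n+1})$ with $\loeb^{n+1}(\epi(g)) = \int_A f\,d\loeb^n$, proving at once that $g$ is $L$-integrable and that $\int_A g\,d\loeb^n = \int_A f\,d\loeb^n$.

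The main obstacle is a small technicality underlying the step where I claim $\loeb^{n+1}(\epi(F_k^\pm))$ tends to $\int_A f\,d\loeb^n$: this requires $L$-integrability of the constant function $1/k$ on $A$, which in turn rests on $A \times [0,1/k]$ being an $L$-measurable subset of $\F^{n+1}$ with the expected measure $(1/k)\loeb^n(A)$. Product measurability should follow from a Caratheodory-style argument, covering $A$ by rectangles in $\F^n$ and lifting to rectangles in $\F^{n+1}$; alternatively one can bypass linearity entirely and estimate $\extl^{n+1}(\epi(F_k^+)\setminus\epi(F_k^-)) \leq (2/k)\loeb^n(A)$ directly from the outer-measure definition, then deduce that $\epi(f) \triangle \epi(g)$ is $L$-null by completeness of $\mathfrak{C}(\F^{n+1})$. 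Handling the truncation in $F_k^-$ cleanly is the most delicate bookkeeping.
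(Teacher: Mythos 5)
Your argument is correct in substance, but it is organized quite differently from the paper's. The paper works with the single function $f-g$: since $|f(x)-g(x)|\leq 1/k$ for every real $1/k$, the set $\epi(f-g)$ lies in the horizontal slab $A\times[-1/k,1/k]_\F$, whose outer measure is at most $\frac{2}{k}\loeb^n(A)$ by the obvious covering of $A$ by rectangles; hence $\epi(f-g)$ is a null (so measurable, by completeness) set, and linearity then gives $\int_A g = \int_A f + \int_A(g-f) = \int_A f$. You instead sandwich $\epi(g)$ between $\epi\bigl((f-1/k)^+\bigr)$ and $\epi(f+1/k)$, pass to the monotone union/intersection, use continuity of $\loeb^{n+1}$ from below and above, and conclude with Lemma \ref{lemma A c B c C}. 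What your route buys is a more explicit treatment of the measurability of $\epi(g)$ (hence of the $L$-integrability of $g$), which the paper's one-line appeal to ``linearity'' leaves implicit; what it costs is that you must first know that $\epi(f+1/k)$ and $\epi\bigl((f-1/k)^+\bigr)$ are themselves $L$-measurable with the expected measures, which is exactly the product/translation bookkeeping you flag. Both arguments ultimately rest on the same elementary slab estimate $\extl^{n+1}(A\times[-\delta,\delta]_\F)\leq 2\delta\,\loeb^n(A)$, so your acknowledged technicality is of the same nature as what the paper uses silently.

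One caveat on your fallback: the set $\epi(F_k^+)\setminus\epi(F_k^-)$ is a band hugging the graph of $f$, not a horizontal slab, so it is not bounded ``directly from the outer-measure definition'' by covering $A$ alone; you would need either the vertical-translation trick (e.g.\ $\epi(f+1/k)\subseteq T_{1/k}(\epi(f))\cup (A\times[0,1/k]_\F)$, then translation invariance of $\loeb^{n+1}$) or, more simply, the paper's device of estimating $\epi(f-g)$ itself, which does sit inside $A\times[-1/k,1/k]_\F$. With that adjustment your proof closes completely.
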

\begin{proof}
	By hypothesis over $f$ and $g$, $\epi(f-g) \subset A \times [-1/n,1/n]_{\F}$ for every $n\in\N$.
	As a consequence, $\loeb^{n+1}(\epi(f-g))\leq \frac{2}{n} \loeb^n(A)$. Since $\loeb^n(A) <+\infty$, $\epi(f-g)$ is a null set in $\F^{n+1}$.
	This and linearity of the integral is sufficient to entail the desired result.
\end{proof}

Notice that the above result relies in an essential way upon the hypothesis that $\loeb^n(A) <+\infty$.
A counterexample that does not satisfy this hypothesis is defined as follows. Let $\varepsilon \in\F$, $\varepsilon>0$ and $\varepsilon\sim0$.
Define $A = [0,\varepsilon^{-1}]_\F$, $f(x) = 0$ for all $x \in A$ and $g(x) = \varepsilon$ for all $x \in A$.
Then $\loeb(A) = +\infty$, $\int_A f \ d \loeb =0$ and $\int_A g \ d\loeb = 1$.

The integral of an integrable function whose range is $\F_{fin}$ is not affected also by infinitesimal perturbations of the domain.

\begin{proposition}\label{proposition values in fin implies same integral over infinitesimally close sets}
	Let $A \subseteq \F^n$ be a $L$-measurable set and let $B$ be a $L$-measurable set satisfying $\loeb(A \Delta B) =0$.
	If $f: A\cup B \rightarrow \F_{fin}$ is $L$-integrable, then
	$$
	\int_A f \ d \loeb^n = \int_B f \ d\loeb^n.
	$$
\end{proposition}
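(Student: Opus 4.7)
The plan is to reduce the proposition to the following claim: if $N \subseteq \F^n$ is $L$-measurable with $\loeb^n(N) = 0$ and $g : N \to \F_{fin}$ is nonnegative, then $\extl^{n+1}(\epi(g)) = 0$ (so by completeness of $\mathfrak{C}(\F^{n+1})$, the epigraph is $L$-measurable of measure zero). Granted this, apply the claim with $g = f^{+}|_N$ and $g = f^{-}|_N$ for $N = A \setminus B$ and $N = B \setminus A$: both sets are $L$-measurable (since $A, B$ are) and $\loeb^n$-null as subsets of $A \Delta B$. Then the disjoint decompositions
\begin{equation*}
\epi(f^{\pm}|_A) = \epi(f^{\pm}|_{A \cap B}) \sqcup \epi(f^{\pm}|_{A \setminus B}), \quad \epi(f^{\pm}|_B) = \epi(f^{\pm}|_{A \cap B}) \sqcup \epi(f^{\pm}|_{B \setminus A}),
\end{equation*}
combined with additivity of $\loeb^{n+1}$, yield $\int_A f^{\pm} \, d\loeb^n = \int_{A \cap B} f^{\pm} \, d\loeb^n = \int_B f^{\pm} \, d\loeb^n$ (the $A \setminus B$ and $B \setminus A$ contributions vanish by the reduced claim). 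Subtracting the $f^{-}$ integral from the $f^{+}$ integral gives the desired equality.

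For the reduced claim, the key observation is that $g(x) \in \F_{fin}$ means $g(x) < M$ for some $M \in \N$ depending on $x$, so that
\begin{equation*}
\epi(g) = \bigcup_{M \in \N} \epi(g) \cap \bigl(\F^n \times [0, M]_{\F}\bigr),
\end{equation*}
and each term is contained in $N \times [0, M]_{\F}$. Given any $M \in \N$ and any $\varepsilon > 0$, cover $N$ by bounded rectangles $\{R_k\}_{k \in \N} \subseteq \F^n$ with $\sum_k \loeb^n(R_k) < \varepsilon / M$; then $\{R_k \times [0, M]_{\F}\}_{k \in \N}$ is a cover of $N \times [0, M]_{\F}$ by bounded rectangles in $\F^{n+1}$ with total $\extl^{n+1}$-measure below $\varepsilon$. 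Hence $\extl^{n+1}(N \times [0, M]_{\F}) = 0$, and by monotonicity $\extl^{n+1}(\epi(g) \cap (\F^n \times [0, M]_{\F})) = 0$. Countable subadditivity of $\extl^{n+1}$ then yields $\extl^{n+1}(\epi(g)) = 0$, as desired.

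The main obstacle is that $f$ is only \emph{pointwise} finite on $A \cup B$, not globally bounded by any standard real, so the epigraph $\epi(f^{\pm}|_N)$ cannot be enclosed in a single slab $N \times [0, M]_{\F}$ of known null measure. The hypothesis $f(A \cup B) \subseteq \F_{fin}$ is used precisely to present the epigraph as a countable union of such slabs indexed by $\N$, after which $\sigma$-subadditivity closes the argument — in sharp analogy with the role played by the finiteness assumption $\loeb^n(A) < +\infty$ in Proposition \ref{proposition simeq implies same integral}.
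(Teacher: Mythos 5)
Your proof is correct and follows essentially the same route as the paper: both arguments show that the epigraph of $f$ over the null set $A \Delta B$ has $\loeb^{n+1}$-measure zero because $f$ takes values in $\F_{fin}$, and then conclude by additivity of the measure (the paper phrases this as linearity of the integral). Your decomposition into the countable family of slabs $N \times [0,M]_{\F}$, $M \in \N$, followed by $\sigma$-subadditivity is just a more explicit rendering of the step the paper expresses as $\epi(g) \subset (A \Delta B) \times [-\omega,\omega]_{\F}$ for every infinite $\omega$.
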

\begin{proof}
	Denote by $g$ the restriction of $f$ to $A\Delta B$.
	The hypotheses over $f$, $A$ and $B$ entail that $\epi(g) \subset (A \Delta B) \times [-\omega, \omega]_{\F}$ for every infinite $\omega \in \F$.
	This and the hypothesis that $\loeb(A\Delta B)=0$ entail $\int_{A \Delta B} f \ d\loeb^n =0.$
	The desired equality is a consequence of this result and of linearity of the integral.
\end{proof}

Similarly to Proposition \ref{proposition simeq implies same integral}, the hypotheses over $f$ are necessary.
A counterexample that does not satisfy this hypothesis is defined as follows. Let $\varepsilon \in\F$, $\varepsilon>0$ and $\varepsilon\sim0$.
Define $A = [0,1]_\F$, $f(x) = \varepsilon^{-1}$ for $x \in [0,\varepsilon]_\F$ and $f(x) =0$ otherwise.
If $B = [\varepsilon, 1]_\F$, then $A \Delta B = [0,\varepsilon]_{\F}$, so that $\loeb(A\Delta B) = 0$.
However, $\int_A f \ d \loeb = \int_{A \Delta B} f \ d \loeb = 1$ and $\int_{B} f \ d \loeb = 0$.

Finally, we can establish that every Lebesgue integrable function has a non-Archimedean representative that is $L$-integrable. Moreover, the integrals of the two functions assume the same value.

\begin{theorem}\label{lifting thm loeb integral}
	If $f: A \subseteq \R^n \rightarrow \R$ is Lebesgue integrable, then
	$\ext{0}{f} : \st^{-1}(A) \rightarrow \R$ defined by $\ext{0}{f}(x) = f(\sh{x})$ is $L$-integrable and $\int_{\st^{-1}(A)} \ext{0}{f} \ d\loeb = \int_{A} f \ dx$.
\end{theorem}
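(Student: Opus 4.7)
The plan is to realise $\epi(\ext{0}{f})$ as $\st^{-1}(\epi(f))$ up to an $L$-null boundary piece, and then invoke the higher-dimensional coherence statement announced after Theorem~\ref{thm coerenza L-meas e misura di lebesgue}. First I split $f = f^+ - f^-$; since $(\ext{0}{f})^{\pm} = \ext{0}{f^{\pm}}$ and the integral is $\R$-linear (Proposition~\ref{linearity of the real-valued integral}), it suffices to treat the case $f \geq 0$.

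Assume $f \geq 0$. The classical Cavalieri/Tonelli principle (cf.\ \cite{pugh}) tells us that $\epi(f) = \{(\mathbf{y},s) \in \R^{n+1} : \mathbf{y} \in A,\ 0 \leq s \leq f(\mathbf{y})\}$ is Lebesgue measurable in $\R^{n+1}$ with $\leb^{n+1}(\epi(f)) = \int_A f\,dx$. Applying the $(n+1)$-dimensional version of Theorem~\ref{thm coerenza L-meas e misura di lebesgue} gives that $\st^{-1}(\epi(f)) \subseteq \F^{n+1}$ is $L$-measurable with $\loeb^{n+1}(\st^{-1}(\epi(f))) = \int_A f\,dx$. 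The task is then to compare this set with $\epi(\ext{0}{f})$.

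The inclusion $\epi(\ext{0}{f}) \subseteq \st^{-1}(\epi(f))$ is immediate: if $(\mathbf{x},t) \in \epi(\ext{0}{f})$, then $\sh{\mathbf{x}} \in A$ and $0 \leq t \leq f(\sh{\mathbf{x}})$, whence $0 \leq \sh{t} \leq f(\sh{\mathbf{x}})$, so $(\sh{\mathbf{x}},\sh{t}) \in \epi(f)$. Conversely, a point of $\st^{-1}(\epi(f)) \setminus \epi(\ext{0}{f})$ satisfies $\sh{\mathbf{x}} \in A$ and $0 \leq \sh{t} \leq f(\sh{\mathbf{x}})$ but fails $0 \leq t \leq f(\sh{\mathbf{x}})$; this forces either $(i)$ $t < 0$ with $\sh{t} = 0$, or $(ii)$ $t > f(\sh{\mathbf{x}})$ with $\sh{t} = f(\sh{\mathbf{x}})$. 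The set in case $(i)$ lies inside $\st^{-1}(A \times \{0\})$, which is $L$-null by coherence because $\leb^{n+1}(A \times \{0\}) = 0$. The set in case $(ii)$ lies inside $\st^{-1}(\Gamma)$, where $\Gamma = \{(\mathbf{y}, f(\mathbf{y})) : \mathbf{y} \in A\}$ is the graph of $f$ in $\R^{n+1}$.

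The decisive point is that $\Gamma$ is Lebesgue measurable with $\leb^{n+1}(\Gamma) = 0$: measurability comes from writing $\Gamma$ as the zero set of the measurable function $(\mathbf{y},s) \mapsto s - f(\mathbf{y})$, and nullity follows from Tonelli since every vertical fibre of $\Gamma$ is a single point. Coherence then gives $\loeb^{n+1}(\st^{-1}(\Gamma)) = 0$, so the difference $\st^{-1}(\epi(f)) \setminus \epi(\ext{0}{f})$ is contained in an $L$-null set. Completeness of $\mathfrak{C}(\F^{n+1})$ forces $\epi(\ext{0}{f}) \in \mathfrak{C}(\F^{n+1})$ with $\loeb^{n+1}(\epi(\ext{0}{f})) = \loeb^{n+1}(\st^{-1}(\epi(f))) = \int_A f\,dx < +\infty$, which is precisely $L$-integrability of $\ext{0}{f}$ together with the required equality of integrals. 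The main subtlety is isolating the vertical-boundary discrepancy between $\epi(\ext{0}{f})$ and $\st^{-1}(\epi(f))$ and recognising that both offending pieces sit inside $\st^{-1}$-images of Lebesgue null sets in $\R^{n+1}$.
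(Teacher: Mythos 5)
Your argument is correct, and it follows the same skeleton as the paper's proof: identify $\int_A f\,dx$ with $\leb^{n+1}(\epi(f))$, transfer via the $(n+1)$-dimensional version of Theorem \ref{thm coerenza L-meas e misura di lebesgue} to get $\loeb^{n+1}(\st^{-1}(\epi(f)))=\int_A f\,dx$, and then compare $\st^{-1}(\epi(f))$ with $\epi(\ext{0}{f})$. Where you genuinely diverge is in that last comparison. The paper handles it by adapting Propositions \ref{proposition simeq implies same integral} and \ref{proposition values in fin implies same integral over infinitesimally close sets} (invariance of the integral under infinitesimal perturbations of the function and of the domain), and since those propositions carry a finite-measure hypothesis, the paper must first assume $\leb^n(A)<+\infty$ and then recover the general case by $\sigma$-additivity and $\sigma$-finiteness of $\leb^n$. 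You instead observe directly that the discrepancy $\st^{-1}(\epi(f))\setminus\epi(\ext{0}{f})$ sits inside $\st^{-1}(A\times\{0\})\cup\st^{-1}(\Gamma)$, where both $A\times\{0\}$ and the graph $\Gamma$ of $f$ are Lebesgue null in $\R^{n+1}$ (the latter by measurability of $(\mathbf{y},s)\mapsto s-f(\mathbf{y})$ plus Fubini), so by coherence the discrepancy lies in an $L$-null set and completeness of $\loeb^{n+1}$ finishes the argument. This buys you two things: the step the paper leaves as ``adapt the proofs of \ldots'' is made fully explicit, and the finite/infinite case split on $\leb^n(A)$ disappears, since nullity of $A\times\{0\}$ and of $\Gamma$ does not depend on $\leb^n(A)$ being finite. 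The only cosmetic points worth adding are a remark that $\st^{-1}(A)$ is itself $L$-measurable (needed to even speak of $L$-integrability on that domain, and immediate from coherence) and that $(\ext{0}{f})^{\pm}=\ext{0}{(f^{\pm})}$, which you already use implicitly in the reduction to $f\geq 0$.
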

\begin{proof}
	Suppose at first that $\leb^{n}(A) < +\infty$.	
	Recall that $\int_A f \ dx = \leb^{n+1}(\epi(f))$ and that, by an argument analogous to that of Theorem \ref{thm coerenza L-meas e misura di lebesgue},  $\leb^{n+1}(\epi(f))=\loeb^{n+1}\left(\st^{-1}\epi(f)\right) <+\infty$.
	Thanks to the hypothesis that $\leb^n(A) < +\infty$, we can adapt the proofs of Propositions \ref{proposition simeq implies same integral} and \ref{proposition values in fin implies same integral over infinitesimally close sets} to obtain that $\loeb^{n+1}\left( \st^{-1}\epi(f)\right) = \loeb^{n+1}\left(\epi\left(\ext{0}{f}\right) \right)$, so that
	$$
	\int_A f \ dx = \leb^{n+1}(\epi(f))=
	\loeb^{n+1}\left(\st^{-1}\epi(f)\right) = \loeb^{n+1}\left(\epi\left(\ext{0}{f}\right)\right) = \int_{\st^{-1}(A)} \ext{0}{f} \ d \loeb^{n+1},
	$$
	as desired.
	
	If $\lambda^n(A) = +\infty$, we obtain the desired result by $\sigma$-additivity of the measures $\leb^n$ and $\loeb^n$ and by $\sigma$-finiteness of $\leb^n$.
\end{proof}

Thanks to Propositions \ref{proposition simeq implies same integral} and \ref{proposition values in fin implies same integral over infinitesimally close sets}, it is possible to sharpen Theorem \ref{lifting thm loeb integral}.

\begin{corollary}\label{lifting corollary loeb integral}
	If $A\subseteq \R^n $ is a Lebesgue measurable set with $\leb^n(A) <+\infty$, if $f: A \rightarrow \R$ is Lebesgue integrable and if $B \subseteq \F^n_{fin}$ is a $L$-measurable set that satisfies $\sh{B}=A$, then
	$\ext{0}{f} : B \rightarrow \F$ is $L$-integrable and $\int_{B} \ext{0}{f} \ d\loeb = \int_{A} f \ dx$.
	
	Moreover, if $g: B \rightarrow \F$ is $L$-integrable and $\sh{g(x)}=f(\sh{x})$ for every $x\in B$, then $\int_{B} g \ d\loeb = \int_{A} f \ dx$.
\end{corollary}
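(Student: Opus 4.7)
The plan is to deduce the corollary from Theorem~\ref{lifting thm loeb integral} together with the invariance Propositions~\ref{proposition simeq implies same integral} and \ref{proposition values in fin implies same integral over infinitesimally close sets}, which were set up for precisely this kind of reduction.

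For the first claim, I would first observe the inclusion $B \subseteq \st^{-1}(A)$: every $x \in B$ satisfies $\sh{x} \in \sh{B} = A$. Applying Theorem~\ref{thm coerenza L-meas e misura di lebesgue} to $A$ gives $\loeb^n(\st^{-1}(A)) = \leb^n(A)$, while Theorem~\ref{thm coerenza L-meas e misura di lebesgue2} applied to $B$ gives $\loeb^n(B) = \leb^n(\sh{B}) = \leb^n(A)$. Since $\leb^n(A) < +\infty$, additivity of $\loeb^n$ on the pair $B$, $\st^{-1}(A)\setminus B$ forces $\loeb^n(\st^{-1}(A) \setminus B) = 0$, hence $\loeb^n(B \Delta \st^{-1}(A)) = 0$. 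Now $\ext{0}{f}$ is defined on all of $\st^{-1}(A) \supseteq B$ and takes values in $\R \subseteq \F_{fin}$, and Theorem~\ref{lifting thm loeb integral} gives that it is $L$-integrable on $\st^{-1}(A)$ with integral equal to $\int_A f \, dx$. Proposition~\ref{proposition values in fin implies same integral over infinitesimally close sets} applied to the pair $(\st^{-1}(A), B)$ then yields $L$-integrability of $\ext{0}{f}$ on $B$ together with the equality $\int_B \ext{0}{f}\, d\loeb^n = \int_{\st^{-1}(A)} \ext{0}{f}\, d\loeb^n = \int_A f\, dx$.

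For the second claim, the assumption $\sh{g(x)} = f(\sh{x}) = \ext{0}{f}(x)$ for every $x \in B$ translates into $g(x) \simeq \ext{0}{f}(x)$ pointwise on $B$. Both $g$ and $\ext{0}{f}$ are $L$-integrable on $B$ (the latter by the first part), and $\loeb^n(B) = \leb^n(A) < +\infty$, so Proposition~\ref{proposition simeq implies same integral} gives directly $\int_B g \, d\loeb^n = \int_B \ext{0}{f} \, d\loeb^n$, which equals $\int_A f\, dx$ by the first part. The argument is a short concatenation of three earlier results; the only point requiring genuine attention is to track that the finiteness hypothesis $\leb^n(A) < +\infty$ transfers to both $\loeb^n(B)$ and $\loeb^n(\st^{-1}(A))$, which is exactly what guarantees the applicability of the two invariance propositions (the counterexamples displayed immediately after them show that this finiteness cannot be dropped).
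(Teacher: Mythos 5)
Your proposal is correct and follows exactly the route the paper intends: the paper states the corollary without proof, merely remarking that it follows from Theorem~\ref{lifting thm loeb integral} "thanks to" Propositions~\ref{proposition simeq implies same integral} and~\ref{proposition values in fin implies same integral over infinitesimally close sets}, and your argument fills in precisely those details (the inclusion $B \subseteq \st^{-1}(A)$, the equality of the finite measures via Theorems~\ref{thm coerenza L-meas e misura di lebesgue} and~\ref{thm coerenza L-meas e misura di lebesgue2}, hence $\loeb^n(B \,\Delta\, \st^{-1}(A))=0$, and then the two invariance propositions). Your tracking of where the finiteness hypothesis is used matches the paper's own caveats following those propositions.
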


\section{Comparison with a class of non-Archimedean uniform measures}\label{section comparison with LC}

In this section we compare the real-valued measure $\loeb$ to a class of uniform measures that generalize the uniform measure developed by Shamseddine and Berz for the Levi-Civita field to arbitrary Cauchy complete non-Archimedean extensions of the real numbers.

\subsection{A class of non-Archimedean uniform measures}\label{section measure theory}


In analogy with the Lebesgue measure theory and following \cite{lpcivita,shamseddine2012,berz+shamseddine2003}, we say that a subset of $\F$ is $\m$-measurable if it can be approximated with arbitrary precision by a countable sequence of intervals.

\begin{definition}\label{measurable sets}
	A set $A \subseteq \F$ is $\m$-measurable if and only if for every $\varepsilon\in\F$ there exist two sequences of mutually disjoint intervals $\{I_n\}_{n\in\N}$ and $\{J_n\}_{n\in\N}$ such that
	\begin{enumerate}
		\item $\bigcup_{n\in\N} I_n \subseteq A \subseteq \bigcup_{n\in\N} J_n$;
		\item $\sum_{n\in\N} l(I_n)$ and $\sum_{n\in\N} l(J_n)$ converge in $\F$;
		\item $\sum_{n\in\N} l(J_n) - \sum_{n\in\N} l(I_n) \leq \varepsilon$.
	\end{enumerate}
\end{definition}

Notice that, according to the above definition, $\F$ is not $\m$-measurable, since if $\F \subseteq \bigcup_{n\in\N} J_n$, then $\sum_{n\in\N} l(J_n)$ does not converge in $\F$.
Moreover, the family of $\m$-measurable sets is not an algebra, since it is not closed under complements. In addition, due to the properties of convergence in non-Archimedean field extensions of $\R$, it is also not closed under countable unions. For further discussion on the family of measurable sets on the Levi-Civita field $\civita$, we refer to \cite{lpcivita,moreno,berz+shamseddine2003}.

Nevertheless, it is possible to define a $\F$-valued function, that we will still call a \emph{measure}, according to the convention established in \cite{berz+shamseddine2003}, on the family of $\m$-measurable sets under the additional hypothesis that $\F$ is Cauchy complete in the order topology.

\begin{lemma}
	Suppose that $\F$ is Cauchy complete in the order topology.
	Then for every $\m$-measurable set $A \subset \F$
	$$
	\underline{m}(A) = \sup \left\{ \sum_{n\in\N} l(I_n) : \{I_n\}_{n \in \N} \text{ is a sequence of mutually disjoint intervals with } \bigcup_{n \in \N} I_n \subseteq A \right\}
	$$
	and
	$$
	\overline{m}(A)\inf \left\{ \sum_{n\in\N} l(J_n) : \{J_n\}_{n \in \N} \text{ is a sequence of mutually disjoint intervals with } A \subseteq \bigcup_{n \in \N} J_n \right\}
	$$
	are well-defined.
	Moreover, $\underline{m}(A) = \overline{m}(A)$.	
\end{lemma}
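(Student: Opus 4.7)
The plan is to produce a single element $s \in \F$ that will serve simultaneously as $\underline{m}(A)$ and $\overline{m}(A)$. First I would fix a sequence $\{\varepsilon_k\}_{k \in \N}$ of positive elements of $\F$ that converges to $0$ in the order topology; such a sequence is available in any Cauchy-complete non-Archimedean extension of $\R$ in which nontrivial Cauchy sequences exist (for instance, integer powers of any positive infinitesimal suffice in the Levi-Civita case). Applying the $\m$-measurability hypothesis for each $\varepsilon_k$, I would select disjoint interval sequences $\{I_n^k\}_{n \in \N}$ and $\{J_n^k\}_{n \in \N}$ fulfilling the three conditions of Definition \ref{measurable sets}, and write $s_k = \sum_n l(I_n^k)$ and $S_k = \sum_n l(J_n^k)$; both quantities belong to $\F$ by the convergence condition.

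The main obstacle is a monotonicity step, which I would extract as an intermediate claim: whenever $\{I_n\}$ is a disjoint family of intervals contained in $A$ with convergent length sum and $\{J_m\}$ is a disjoint family of intervals covering $A$ with convergent length sum, one has $\sum_n l(I_n) \leq \sum_m l(J_m)$. The argument is that each $I_n$ decomposes as the disjoint union $\bigcup_m (I_n \cap J_m)$, and for each fixed $m$ the family $\{I_n \cap J_m\}_n$ is a disjoint collection of subintervals of $J_m$ whose lengths sum to at most $l(J_m)$. Interchanging summations and using additivity of length on the disjoint decomposition of each $I_n$ yields the inequality. The delicate point here is that intervals in the totally disconnected topology of $\F$ may split as nontrivial countable disjoint unions of subintervals, so the additivity needs a careful verification that relies on the convergence hypotheses rather than on any connectedness of intervals; this is the step I expect to require the most care.

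Granted monotonicity, I would deduce that $s_k \leq S_j$ for all $j, k$, whence $|s_k - s_j| \leq \max\{S_k - s_k,\, S_j - s_j\} \leq \max\{\varepsilon_k, \varepsilon_j\}$. Since $\varepsilon_k \to 0$, the sequence $\{s_k\}$ is Cauchy in $\F$ and therefore converges to some $s \in \F$ by hypothesis; the inequality $|S_k - s_k| \leq \varepsilon_k$ forces $S_k \to s$ as well.

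It remains to identify $s$ with both the supremum and the infimum in the lemma. For any inner sum $s'$ arising from a disjoint family of intervals contained in $A$, the monotonicity claim gives $s' \leq S_k$ for every $k$, hence $s' \leq \lim_k S_k = s$; so $s$ is an upper bound for the inner sums. Conversely, if $t < s$, then $s - t > 0$, and the order-topology convergence $s_k \to s$ forces $s_k > t$ eventually; since each $s_k$ is itself an inner sum, $t$ cannot be an upper bound. Thus $s = \underline{m}(A)$. The symmetric argument applied to outer sums establishes $s = \overline{m}(A)$, and the equality $\underline{m}(A) = \overline{m}(A) = s$ follows.
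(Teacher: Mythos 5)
Your overall skeleton is sound and is in fact the same route the paper takes: the paper's own proof is a one-line deferral to Section 2 and Proposition 2.2 of \cite{berz+shamseddine2003}, whose argument has exactly your shape (extract approximating pairs for a null sequence of tolerances, show the inner sums form a Cauchy sequence, identify the limit with both the supremum and the infimum). Your Cauchy/sandwich bookkeeping is correct: $|s_k-s_j|\le\max\{S_k-s_k,S_j-s_j\}$, completeness gives the common limit $s$, and the least-upper-bound/greatest-lower-bound identifications go through once monotonicity is available. A small remark: you do not need to hedge about the existence of a positive null sequence $\{\varepsilon_k\}$ in a general Cauchy complete $\F$ --- if $A$ is $\m$-measurable at all, then any admissible family $\{I_n\}_{n\in\N}$ has $\sum_n l(I_n)$ convergent with $l(I_n)>0$, so the lengths themselves already furnish a sequence of positive elements converging to $0$.

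The genuine gap is the step you explicitly set aside: the claim that $l(I_n)\le\sum_m l(I_n\cap J_m)$ when the disjoint cover $\{J_m\}$ partitions the interval $I_n$ into countably many subintervals. This countable partition (or covering) inequality for intervals is not a routine verification to be deferred; it is the entire substance of the lemma, and it is precisely what the cited Proposition 2.2 of \cite{berz+shamseddine2003} establishes. In a totally disconnected non-Archimedean field none of the usual tools apply: intervals are not connected, $[a,b]_{\F}$ is not compact, bounded monotone sequences need not converge, and suprema of bounded sets need not exist, so one cannot run the standard Lebesgue-style ``greedy chaining'' or compactness argument verbatim; Cauchy completeness has to be injected at exactly this point (note also that the easy comparison only gives the inequality at the level of standard parts, not the exact inequality in $\F$ that you need to conclude $s_k\le S_j$). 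The interchange of the double summation is a second, more routine, item that still deserves a line of justification, since it too cannot lean on supremum arguments. As it stands, every use of your monotonicity claim --- $s_k\le S_j$, $s'\le S_k$, and hence the identification of $s$ with $\underline{m}(A)$ and $\overline{m}(A)$ --- rests on an unproven statement, so the proof is incomplete until you either prove the partition inequality under Cauchy completeness or invoke the Shamseddine--Berz argument for it, as the paper does.
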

\begin{proof}
	The proof can be obtained from the argument in Section 2 and Proposition 2.2 of \cite{berz+shamseddine2003}.
	Notice that this argument only depends upon the hypothesis that $\F$ is Cauchy complete in the order topology and does not rely on other properties of the Levi-Civita field.
\end{proof}

By exploiting the above Lemma, it is possible to define a measure for any $\m$-measurable set $A$.

\begin{definition}\label{definizione misura}
	Suppose that $\F$ is is Cauchy complete in the order topology.
	If $A\subset\F$ is a $\m$-measurable set, then the measure of $A$, denoted by $\m(A)$, is defined as
	$$
		\m(A) = \underline{m}(A) = \overline{m}(A).
	$$
\end{definition}

\begin{remark}
	If $F = \civita$, the Levi-Civita field, then the measure $\m$ of Definition \ref{definizione misura} is the uniform measure developed by Shamseddine and Berz.
	Its properties are discussed in detail in \cite{lpcivita,shamseddine2012,berz+shamseddine2003}.
\end{remark}

\subsection{The relation between $\loeb$ and the non-Archimedean uniform measures}

The main result of this section is that the real-valued measure $\loeb$ is compatible with the non-Archimedean unform measure $\m$ on every Cauchy complete non-Archimedean extension of the real numbers.
Namely, $\m$-measurable sets are also $L$-measurable, and the real-valued measure is equal to the standard part of the non-Archimedean one.

\begin{theorem}\label{thm coerenza L-meas e m-meas} 
	Suppose that $\F$ is Cauchy complete in the order topology.
	If $A \subset \F$ is $\m$-measurable, then $A \in \mathfrak{C}$ and $\sh{\m(A)} = \loeb(A)$.
\end{theorem}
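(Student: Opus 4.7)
The plan is to bracket $A$ between two $L$-measurable sets of equal, finite real measure $\sh{\m(A)}$ and then invoke Lemma~\ref{lemma A c B c C}. Before starting, I would record the following auxiliary fact: in a field Cauchy complete in the order topology, any convergent series $\sum_n a_n = S$ of non-negative terms in $\F$ with $S \in \F_{fin}$ satisfies $\sum_n \sh{a_n} = \sh{S}$. Indeed, order-topology convergence forces $|S_N - S| < r$ eventually for each real $r > 0$, so $\sh{S_N} \to \sh{S}$ as real numbers, while finite additivity of $\sh{\cdot}$ gives $\sh{S_N} = \sum_{n \leq N} \sh{a_n}$. Without this observation the theorem could genuinely fail, since a priori a countable sum of infinitesimals could be appreciable; Cauchy completeness is exactly what rules this out.

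With this in hand, for each $k \in \N$ I would apply $\m$-measurability with parameter $1/k$ to obtain sequences of mutually disjoint intervals $\{I_n^{(k)}\}_{n \in \N}$ and $\{J_n^{(k)}\}_{n \in \N}$ with $\bigcup_n I_n^{(k)} \subseteq A \subseteq \bigcup_n J_n^{(k)}$ and $\sum_n l(J_n^{(k)}) - \sum_n l(I_n^{(k)}) < 1/k$, which also places both sums within $1/k$ of $\m(A)$. Define
\[
A^{-} := \bigcup_{k \in \N}\bigcup_{n \in \N} I_n^{(k)}, \qquad A^{+} := \bigcap_{k \in \N}\bigcup_{n \in \N} J_n^{(k)},
\]
so $A^{-} \subseteq A \subseteq A^{+}$. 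Bounded intervals belong to $\mathfrak{C}$ (the Carathéodory split $K_n = (K_n \cap J) \cup (K_n \setminus J)$ combined with finite additivity of $\sh{\cdot}$ settles this), and since $\mathfrak{C}$ is closed under countable unions and intersections, both $A^{-}$ and $A^{+}$ are $L$-measurable.

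By $\sigma$-additivity of $\loeb$ on the disjoint union of intervals and the auxiliary fact,
\[
\loeb\Bigl(\bigcup_n I_n^{(k)}\Bigr) = \sum_n \sh{l(I_n^{(k)})} = \sh{\Bigl(\sum_n l(I_n^{(k)})\Bigr)},
\]
and analogously for the $J_n^{(k)}$. Using that $\sh{\cdot}$ commutes with bounded suprema and infima of non-negative elements of $\F$, monotonicity of $\loeb$ then yields
\[
\sh{\m(A)} \leq \loeb(A^{-}) \leq \loeb(A^{+}) \leq \sh{\m(A)},
\]
so all three quantities coincide with $\sh{\m(A)}$.

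In the principal case $\m(A) \in \F_{fin}$ this common value is finite, and Lemma~\ref{lemma A c B c C} applied to $A^{-} \subseteq A \subseteq A^{+}$ delivers $A \in \mathfrak{C}$ with $\loeb(A) = \sh{\m(A)}$. The case $\m(A) \notin \F_{fin}$, where $\sh{\m(A)} = +\infty$, I would handle by localization: intersect everything with $[-N,N]_{\F}$ for each $N \in \N$, truncate the covers accordingly, apply the finite case piecewise, and reassemble via $\sigma$-additivity. The main technical obstacle is precisely the auxiliary fact on commutation of $\sh{\cdot}$ with convergent infinite sums, as this is where the Cauchy-completeness hypothesis on $\F$ is indispensable; the remaining steps are a standard Carathéodory sandwich.
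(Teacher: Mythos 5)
Your finite-measure case is sound and is essentially the paper's own argument: you bracket $A$ between two $L$-measurable countable unions of intervals carrying the same finite measure $\sh{\m(A)}$ and invoke Lemma \ref{lemma A c B c C}; your auxiliary fact that $\sh{\cdot}$ passes through a convergent series plays exactly the role of the paper's observation that only finitely many lengths $l(I_n)$ are non-infinitesimal and that the tail sum is infinitesimal, so up to repackaging (real tolerances $1/k$ and a union/intersection over $k$ instead of a single inner/outer pair with infinitesimal defect) this part is fine.

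The genuine gap is the case $\sh{\m(A)}=+\infty$. Your localization ``intersect with $[-N,N]_\F$ for $N\in\N$ and reassemble by $\sigma$-additivity'' cannot recover $A$: the union $\bigcup_{N\in\N}[-N,N]_\F$ is $\F_{fin}$, not $\F$, so you only reconstruct $A\cap\F_{fin}$. Moreover the loss is unavoidable precisely in this case: if $A\subseteq\F_{fin}$ is $\m$-measurable, every inner family of disjoint intervals lies in $\F_{fin}$, so each of its lengths is finite and, convergence forcing all but finitely many of them to be infinitesimal, the inner sums are finite elements of $\F$; measurability then makes $\m(A)$ finite too. Hence $\sh{\m(A)}=+\infty$ forces $A\not\subseteq\F_{fin}$ (think of $A=[0,\omega]_\F$ with $\omega$ infinite), and your truncations necessarily miss part of $A$, while the sandwich cannot be applied directly because Lemma \ref{lemma A c B c C} requires finite measure. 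The paper closes this case by a different device: from an inner family $\{I_n\}$ with $\m(A)-\sum_n l(I_n)\simeq 0$, only finitely many $I_n$ (say $n\le i$) have non-infinitesimal length; the remainder $N=A\setminus\bigcup_{n\le i}I_n$ is $\m$-measurable with $\m(N)\simeq 0$, hence $L$-null by the finite case, so $A=\bigl(\bigcup_{n\le i}I_n\bigr)\cup N$ is $L$-measurable as a finite union of intervals and a null set, and $\loeb(A)=+\infty=\sh{\m(A)}$ follows by monotonicity since at least one $I_n$ with $n\le i$ must have infinite length. You need an argument of this kind to complete your proof.
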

\begin{proof}
	%
	Let $\{I_n\}_{n\in\N}$ and $\{J_n\}_{n\in\N}$ be two families of mutually disjoint intervals satisfying $\bigcup_{n\in\N} I_n \subseteq A \subseteq \bigcup_{n\in\N} J_n$ and $\sum_{n \in \N} l(J_n) - \sum_{n \in \N} l(I_n) \simeq 0$.
	As a consequence, $\sh{\m(A)} = \sh{\m\left(\bigcup_{n \in \N} I_n \right)} = \sh{\m\left(\bigcup_{n \in \N} J_n \right)}$.
	
	Notice that $\bigcup_{n\in\N} I_n$ and $\bigcup_{n\in\N} J_n$ are $L$-measurable, since they are a countable union of $L$-measurable sets.
	Since $\sum_{n\in\N} l(I_n)$ converges in $\F$, there exists $i \in \N$ such that $\sh(l(I_n))=l_L(I_n)=0$ for every $n > i$. 
	We deduce that
	$$
	\sh{\m\left(\bigcup_{n \in \N} I_n \right)}
	=
	\sh{\left( \sum_{n\in\N} l(I_n) \right)}
	=
	\sh{\left( \sum_{n\leq i} l(I_n) \right)} + \sh{\left( \sum_{n>i} l(I_n) \right)}
	=
	\sh{\left( \sum_{n\leq i} l(I_n) \right)}
	=
	\sum_{n \leq i} \sh{l(I_n)}.
	$$
	and
	$$
	\loeb\left(\bigcup_{n \in \N} I_n \right)
	=
	\sum_{n\in\N} l_L(I_n) = \sum_{n \leq i} l_L(I_n) = \sum_{n \leq i} \sh{l(I_n)}.
	$$
	From the previous equalities we conclude
	$$
	\loeb\left(\bigcup_{n \in \N} I_n \right)
	=
	\sh{\m\left(\bigcup_{n \in \N} I_n \right)}
	$$
	and, with a similar argument, we obtain also
	$$
	\loeb\left(\bigcup_{n \in \N} J_n \right)
	=
	\sh{\m\left(\bigcup_{n \in \N} J_n \right)}.
	$$
	
	Thus $\loeb\left(\bigcup_{n \in \N} I_n \right) =
	\sh{\m\left(\bigcup_{n \in \N} I_n \right)} = \sh{\m(A)} = \sh{\m\left(\bigcup_{n \in \N} J_n \right)} = \loeb\left(\bigcup_{n \in \N} J_n \right).$
	
	If $\loeb\left(\bigcup_{n\in\N} I_n\right) = \loeb\left(\bigcup_{n\in\N} J_n\right)$ is finite (and possibly equal to $0$), by Lemma \ref{lemma A c B c C}, we conclude that
	$A \in\mathfrak{C}$ and $\loeb(A) = \loeb\left(\bigcup_{n\in\N} I_n\right) = \loeb\left(\bigcup_{n\in\N} J_n\right) = \sh{\m(A)}$.
	
	If $\loeb\left(\bigcup_{n\in\N} I_n\right) = \loeb\left(\bigcup_{n\in\N} J_n\right) = +\infty$, we only need to prove that $A$ is $L$-measurable.
	Let $i \in \N$ such that $\sh(l(I_n))=l_L(I_n)=0$ for every $n > i$.
	Then $A = \left(\bigcup_{n=1}^i I_n \right) \cup N$, where $N \subset \civita$ is a $m$-measurable set with $\m(N) \simeq 0$.
	By the first part of the proof, $N$ is a $L$-null set, and in particular it is measurable.
	We have written $A$ as a finite union of intervals and of a $L$-null set.
	Since intervals are $L$-measurable and since $\mathfrak{C}$ is a $\sigma$-algebra, hence closed also for finite unions, we deduce that $A$ is also $L$-measurable, as desired.
\end{proof}

The previous result allows also to gauge the expressive power of the uniform measure $\m$.
In order to do so, we will exploit the standard part function $\sh $ in order to project subsets of $\F_{fin}$ to subsets of $\R$.
It turns out that $\m$-measurable subsets of $\F_{fin}$ 
are projected to the union of a set that is at most countable and of a finite union of intervals.

\begin{proposition}\label{proposition shadow of a measurable set}
	Suppose that $\F$ is Cauchy complete in the order topology.
	If $A\subset \F_{fin}$ is measurable, then there exists a finite (possibly empty) or countable set $C$, a natural number $n \in\N$ and $m$ intervals $K_1, \ldots, K_m$ such that
	$$\sh{A}=\{ \sh{x} : x\in A\} = C \cup \left(\bigcup_{j\leq m} K_j\right).$$
\end{proposition}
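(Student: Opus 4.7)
The plan is to exploit the fact that convergence of a series in a Cauchy complete non-Archimedean field forces the terms to become eventually infinitesimal, so that any $m$-measurable approximation of $A$ involves only finitely many intervals of appreciable length. The shadow $\sh{A}$ will then be built from the standard parts of these finitely many intervals, modulo at most a countable set of points coming from the infinitesimally short intervals.

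First I would apply Definition \ref{measurable sets} with $\varepsilon\in\F$ a positive infinitesimal, producing mutually disjoint sequences $\{I_n\}_{n\in\N}$ and $\{J_n\}_{n\in\N}$ with $\bigcup_n I_n\subseteq A\subseteq\bigcup_n J_n$ and $\sum_n l(J_n)-\sum_n l(I_n)\leq\varepsilon$. The Cauchy criterion in the order topology of $\F$ forces the terms $l(I_n)$ and $l(J_n)$ to be eventually smaller than any positive element of $\F$, hence eventually infinitesimal; together with the fact that $\sum l(J_n)-\sum l(I_n)\simeq 0$ (which rules out $J_n$ of infinite length, since each $I_n\subseteq A$ lies in $\F_{fin}$ and so $\sum l(I_n)$ is finite) this leaves only finitely many $I_n$ and $J_n$ of appreciable length. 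For every such appreciable $I_n\subseteq\F_{fin}$, the set $\sh{I_n}$ is a closed bounded interval $K_j\subseteq\R$, and each appreciable $J_n$ meeting $\F_{fin}$ yields an analogous interval $K_k^*$; the intervals of infinitesimal length each contribute at most a single real point to the standard part, so the inclusions $\bigcup I_n\subseteq A\subseteq\bigcup J_n$ translate into
\[
\bigcup_{j\leq m} K_j\ \subseteq\ \sh{A}\ \subseteq\ \bigcup_{k\leq p} K_k^*\cup C_J
\]
for some countable $C_J\subset\R$.

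To conclude, I would show that $\sh{A}\setminus\bigcup_j K_j$ is at most countable. Taking standard parts of $\sum l(J_n)-\sum l(I_n)\simeq 0$, and using that the infinitesimal tails of both series are themselves infinitesimal by the Cauchy property, I obtain $\sum_j\leb(K_j)=\sum_k\leb(K_k^*)$. Mutual disjointness of the $I_n$'s (resp.\ $J_n$'s) passes to pairwise disjointness of the interiors of the $K_j$'s (resp.\ $K_k^*$'s), whence $\leb(\bigcup_j K_j)=\sum_j\leb(K_j)=\sum_k\leb(K_k^*)=\leb(\bigcup_k K_k^*)$. Combined with $\bigcup_j K_j\subseteq\bigcup_k K_k^*\cup C_J$ and the fact that $C_J$ is Lebesgue null, this forces $\leb(\bigcup_k K_k^*\setminus\bigcup_j K_j)=0$; being a finite Boolean combination of closed real intervals, this measure-zero set must in fact be finite. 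Hence $\sh{A}\setminus\bigcup_j K_j\subseteq(\bigcup_k K_k^*\setminus\bigcup_j K_j)\cup C_J$ is a union of a finite and a countable set, and setting $C=\sh{A}\setminus\bigcup_j K_j$ yields the required $\sh{A}=C\cup\bigcup_{j\leq m} K_j$. The main obstacle is precisely this bookkeeping between the interior and exterior approximations, together with the observation that a measure-zero finite Boolean combination of real intervals necessarily reduces to a finite set.
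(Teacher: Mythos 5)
Your proof is correct and follows essentially the same route as the paper: use convergence of the length series to isolate the finitely many inner and outer intervals of appreciable length, observe that their shadows are closed real intervals while the infinitesimal ones contribute only countably many points, and compare the standard parts of the two length sums. The only difference is in the final bookkeeping — where the paper concludes directly that the inner and outer unions of shadow-intervals coincide, you show their difference is Lebesgue-null and hence a finite set which you absorb into $C$; this is a harmless (and arguably more explicit) justification of the same step.
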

\begin{proof}
	Since $A \subseteq \F_{fin}$ is measurable, we deduce that $\m(A)$ is a finite number.
	Moreover, there are two families of mutually disjoint intervals $\{I_n\}_{n\in\N}$ and $\{J_n\}_{n\in\N}$ satisfying
	\begin{itemize}
		\item $\bigcup_{n\in\N} I_n \subseteq A \subseteq \bigcup_{n\in\N} J_n$,
		\item $\sum_{n \in \N} l(J_n) - \sum_{n \in \N} l(I_n) \simeq 0$ and
		\item $\m(A) - \sum_{n \in \N} l(I_n) \simeq 0$.
	\end{itemize}
	From the above properties and finiteness of $\m(A)$
	we deduce that $l(I_n)\in\civita_{fin}$ and that $l(J_n)\in\civita_{fin}$ for all $n \in\N$.
	
	Since each of the sums $\sum_{n \in \N} l(I_n)$ and $\sum_{n \in \N} l(J_n)$ converges in $\F$, let $\underline{i} = \sup\{n \in \N : l(I_n)\not\simeq 0\}$ and $\overline{i} = \sup\{n \in \N : l(J_n)\not\simeq 0\}$.
	Then for every $n \leq \underline{i}$ there exists $\underline{a}_n, \underline{b}_n \in \R$, $\underline{a}_n < \underline{b}_n$ such that $\sh{I_n} = [\underline{a}_n,\underline{b}_n]_\R$.
	On the other hand, if $n > \underline{i}$ then there exists $\overline{c}_n \in \R$ such that $\sh{I_n} = \{\overline{c}_n\}$.
	Similarly, for every $n \leq \overline{i}$ there exists $\overline{a}_n, \overline{b}_n \in \R$, $\overline{a}_n < \overline{b}_n$ such that $\sh{J_n} = [\overline{a}_n,\overline{b}_n]_\R$ and for every $n > \overline{i}$ then there exists $\overline{c}_n \in \R$ such that $\sh{I_n} = \{\overline{c}_n\}$.
	We obtain the inclusions
	$$
	\left\{\underline{c}_n : n > \underline{i}\right\}
	\cup
	\bigcup_{n \leq \underline{i}} [\underline{a}_n,\underline{b}_n]_\R
	\subseteq
	\sh{A}
	\subseteq
	\{\overline{c}_n : n > \overline{i}\}
	\cup
	\bigcup_{n \leq \overline{i}} [\overline{a}_n,\overline{b}_n]_\R.
	$$
	
	If we prove that $\bigcup_{n \leq \underline{i}} [\underline{a}_n,\underline{b}_n]_\R = \bigcup_{n \leq \overline{i}} [\overline{a}_n,\overline{b}_n]_\R$, then we obtain the desired result.
	
	Notice that
	$$\sum_{n \in \N} l(J_n) - \sum_{n \in \N} l(I_n)
	=
	\left(\sum_{n \leq \overline{i}} l(J_n)-\sum_{n \leq \underline{i}} l(I_n)\right) + \left(\sum_{n > \overline{i}} l(J_n) - \sum_{n > \underline{i}} l(I_n)\right).
	$$
	Since $l(J_n)\simeq 0$ whenever $n > \overline{i}$ and $l(I_n)\simeq0$ whenever $n > \underline{i}$, Lemma 2.11 of \cite{lpcivita} implies that $\sum_{n > \overline{i}} l(J_n) - \sum_{n > \underline{i}} l(I_n) \simeq 0$.
	This result and the hypothesis that $\sum_{n \in \N} l(J_n) - \sum_{n \in \N} l(I_n) \simeq 0$ entail the equality
	$$
	\sum_{n \leq \overline{i}} l(J_n)=\sum_{n \leq \underline{i}} l(I_n).
	$$
	This can only happen if $\bigcup_{n \leq \underline{i}} [\underline{a}_n,\underline{b}_n]_\R = \bigcup_{n \leq \overline{i}} [\overline{a}_n,\overline{b}_n]_\R$, as desired.
	
	As a consequence, we can choose e.g. $m = \underline{i}$, $K_j = [\underline{a}_j,\underline{b}_j]$ for every $j\leq m$ and $C = \bigcup_{n \geq \overline{i}} \{\overline{c}_n\} \cap A$.
\end{proof}

The above result and Proposition \ref{proposition shadow of a measurable set} entail that the real-valued measure $\loeb$ allows for more measurable sets than the non-Archimedean measure $\m$.

\section{A real-valued integral on the Levi-Civita field}\label{sec integral}

In this section we exploit the same ideas used in the construction of the real-valued measure
and introduce a real-valued integral on functions defined on the Levi-Civita field.
This real-valued integral is obtained from the non-Archimedean integral defined by Shamseddine and Berz in \cite{berz+shamseddine2003} and further developed in \cite{lpcivita,shamseddine2012}.
Recently, the integration on the Levi-Civita field has also been extended in dimension $2$ and $3$ by Flynn and Shamseddine \cite{shamflin2,shamflin1}, but in our paper we focus only on the integration in dimension $1$.

We start by recalling the basic definitions and properties of the Levi-Civita field and of its non-Archimedean integral.

\subsection{The Levi-Civita field}\label{sec introcivita}


\begin{definition}
	A set $F \subset \Q$ is called left-finite if and only if for every $q \in \Q$ the set $\{x \in F : x \leq q \}$ is finite.
	The Levi-Civita field is the set $$\civita = \{ x:\Q \rightarrow \R: \{q:x(q)\not=0\} \text{ is left-finite} \},$$
	with the pointwise sum and the product defined by the formula
	$$
	(x \cdot y)(q) = \sum_{q_1+q_2=q} x(q_1) \cdot y(q_2).
	$$
\end{definition}

For a review of the algebraic and topological properties of $\civita$, we refer for instance to \cite{berz, analysislcf, reexp, Shamseddinephd} and references therein.

In the Levi-Civita field there are two notions of convergence: the one induced by the metric, analogous to the usual definition of limit for real-valued sequences, usually called strong convergence, and the weak convergence.

\begin{definition}\label{definition strong convergence}
	A sequence $\{a_n\}_{n \in \N}$ of elements of $\civita$ strongly converges to $l \in \civita$ if and only if
	$$
	\forall \varepsilon \in \civita, \varepsilon > 0, \exists n \in \N : \forall m > n\ |c_m-l|<\varepsilon.
	$$
	
	%
	A sequence $\{a_n\}_{n \in \N}$ of elements of $\civita$ weakly converges to $l \in \civita$ if and only if
	$$
	\forall \varepsilon \in \R, \varepsilon > 0, \exists n \in \N : \forall m > n\ \max_{q \in \Q, q\leq \varepsilon^{-1}} \left| (c_m - l)(q) \right|<\varepsilon.
	$$
	We will denote weak convergence with the expression $\wlim_{n \rightarrow \infty} a_n = l$.
\end{definition}



If $a_n \in \civita$ for all $n\in\N$ and if $x_0\in\civita$, we assume that the expression $\sum_{n \in \N} a_n (x-x_0)^n$ denotes the weak limit $\wlim_{k \rightarrow \infty} \sum_{n \leq k} a_n (x-x_0)^n$.



\begin{definition}
	We denote by $\P(I(a,b))$ the algebra of all power series that weakly converge for every $x \in I(a,b)$.
\end{definition}

Since power series with real coefficients weakly converge also in $\civita$ (we refer to \cite{lpcivita} for more details), it is possible to use them to define several extensions of real continuous functions to functions defined on the Levi-Civita field.
These extensions are obtained from the Taylor series expansion of a function at a point.

\begin{definition}\label{def ext}
Let $f \in C^\infty([a,b])$.
The analytic extension of $f$ is defined as
$$
\ext{\infty}{f}(r+\varepsilon) = \sum_{i = 0}^{\infty} f^i(r) \frac{\varepsilon^i}{i!}.
$$
for all $r \in [a,b]$ and for all $\varepsilon \in M_o$ such that $r+\varepsilon \in [a,b]_{\civita}$ (i.e.\ on the nearstandard points of $[a,b]_\civita$).
If $f$ is analytic, then $\ext{\infty}{f}$ will be called the canonical extension of $f$.
The only exceptions are the canonical extensions of the exponential function and of the trigonometric functions sine and cosine, still denoted by $e^x$, $\sin(x)$ and $\cos(x)$ for all $x \in \civita$ with $\lambda(x)\geq0$.

Let $f \in C^n([a,b])$, possibly with $n = \infty$.
The order $k$ extension of $f$, with $0\leq k\leq n$, is defined by
$$
\ext{k}{f}(r+\varepsilon) = \sum_{i = 0}^{k} f^i(r) \frac{\varepsilon^i}{i!}.
$$
for all $r \in [a,b]$ and for all $\varepsilon \in M_o$ such that $r+\varepsilon \in [a,b]_{\civita}$.
\end{definition}

It is possible to prove that each of the extensions introduced in Definition \ref{def ext} is unique and well-defined.
The functions $\ext{k}{f}$ extend the corresponding real function $f\in C^n([a,b])$ in the sense that for every $x \in [a,b]$ $f(x)=\ext{k}{f}$ for all $k \leq n$, possibly with $n = \infty$.
For more properties of the continuations of real functions to the Levi-Civita field, we refer to \cite{berz,analysislcf,reexp,bottazzi,lpcivita,Shamseddinephd}.

\subsection{Integration on the Levi-Civita field}

We briefly recall the basic notions of the non-Archimedean integration on the Levi-Civita field, following the approach of \cite{lpcivita}, that differs from the one of \cite{shamseddine2012,berz+shamseddine2003} in the definition of the simple and measurable functions.

As with the Lebesgue measure, the family of measurable functions is obtained from a family of simple functions.
For the remainder of the paper, we will work with the family of simple functions $\P=\bigcup_{a,b \in \civita, a<b}\P(I(a,b))$: as a consequence, a function $f$ is simple iff there exists an interval $I$ such that $\supp f= I$ and $f$ is a power series that converges for every $x\in I$.

\begin{proposition}\label{prop simple}
	If $f$ is simple on $I(a,b)$, then there exists a unique simple function $g:[a,b]_\civita\rightarrow \civita$ such that $g_{|I(a,b)}=f$.
\end{proposition}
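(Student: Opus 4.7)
Since $f$ is simple on $I(a,b)$, by definition there exist a center $x_0 \in \civita$ and coefficients $\{a_n\}_{n\in\N} \subset \civita$ such that $f(x)=\sum_{n\in\N} a_n(x-x_0)^n$ weakly converges for every $x \in I(a,b)$. My goal is to produce a unique power series representation that weakly converges on all of $[a,b]_\civita$.

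First, I would address existence. If $I(a,b)=[a,b]_\civita$ already, there is nothing to do, so assume at least one endpoint (say $a$) is not in $I(a,b)$. The plan is to invoke the characterization of the region of weak convergence of a power series on the Levi-Civita field (developed in \cite{berz+shamseddine2003} and recalled in \cite{lpcivita}): the set of $x$ at which a power series $\sum_{n} a_n(x-x_0)^n$ weakly converges is, up to boundary behavior, determined by a single ``radius of weak convergence'' $R$ obtained from the $\lambda$-valuations of the $a_n$, and this set is closed under taking points of equal or smaller distance from $x_0$. Since the series converges weakly at every interior point $x \in I(a,b)$, we can choose an ascending sequence in $I(a,b)$ converging to $a$ (or descending toward $b$) whose distance from $x_0$ eventually exceeds $|a-x_0|$ (respectively $|b-x_0|$), and conclude that $a$ (respectively $b$) lies in the region of weak convergence. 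Thus the same series $\sum_n a_n(x-x_0)^n$ is weakly convergent at every $x \in [a,b]_\civita$, and defining
\[
g(x) = \sum_{n \in \N} a_n (x-x_0)^n \qquad \text{for all } x \in [a,b]_\civita
\]
produces a simple function on $[a,b]_\civita$ with $g_{|I(a,b)}=f$.

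For uniqueness, suppose $g_1, g_2 : [a,b]_\civita \to \civita$ are two simple functions extending $f$. Each admits a power series representation on $[a,b]_\civita$, and the two series agree on $I(a,b)$, which has nonempty interior. By the identity principle for weakly convergent power series on the Levi-Civita field (a consequence of the fact that Taylor coefficients at any interior point are recovered from the local behaviour; see \cite{berz+shamseddine2003,lpcivita}), the two representations must coincide coefficientwise, so $g_1 = g_2$ on all of $[a,b]_\civita$.

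The main obstacle is Step 2, justifying that weak convergence on $I(a,b)$ forces weak convergence at the missing endpoint(s). This requires a careful invocation of the ``closed disk'' shape of the region of weak convergence for power series over $\civita$, which is sensitive to the delicate boundary behaviour of weak convergence; once that is granted, both existence and uniqueness of the extension follow routinely from standard properties of power series.
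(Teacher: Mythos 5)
The paper itself gives no argument for this proposition (its ``proof'' is a citation to \cite{lpcivita}), so your proposal must stand on its own, and its central step does not. The gap is exactly the one you flag as ``the main obstacle'': you assume the region of weak convergence of a power series over $\civita$ is closed ball-like around the center $x_0$, and you try to reach a missing endpoint $a$ by exhibiting points of $I(a,b)$ whose distance from $x_0$ ``eventually exceeds'' $|a-x_0|$. Both halves fail. First, whenever $a$ is a farthest point of $[a,b]_\civita$ from $x_0$ (e.g.\ $x_0$ in the interior, or at the opposite endpoint), every point of $I(a,b)$ is strictly closer to $x_0$ than $a$ is, so no such sequence exists and a monotonicity argument can never reach the endpoint. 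Second, the convergence criterion of Shamseddine and Berz \cite{reexp,convergence,berz+shamseddine2003} yields weak convergence when $\lambda(x-x_0)$ exceeds the critical valuation $\lambda_0$, weak convergence at critical points when $\left|(x-x_0)[\lambda_0]\right|<\eta$, and divergence when $\left|(x-x_0)[\lambda_0]\right|>\eta$; the case $\left|(x-x_0)[\lambda_0]\right|=\eta$ is as delicate as the boundary case for classical real power series, and the endpoints $a,b$ are in general precisely such boundary points. So the ``closed disk'' shape you invoke is not a citable fact; it is the very thing that has to be proven, and your argument is circular at that point.

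What the non-Archimedean structure actually provides, and what a correct proof must exploit, is that $I(a,b)$ contains points $b-\varepsilon$ (resp.\ $a+\varepsilon$) with $\varepsilon>0$ of arbitrarily large valuation, and such points have the same valuation and the same leading real coefficient relative to $x_0$ as the endpoint itself. The missing lemma is therefore: weak convergence of $\sum_n a_n(x-x_0)^n$ is insensitive to perturbing the evaluation point by an element of sufficiently large valuation, so that weak convergence at $b-\varepsilon$ for all such $\varepsilon$ forces weak convergence at $b$. This must be argued from the definition of weak convergence (coefficientwise convergence of the partial sums, uniformly over bounded ranges of support points), because it is genuinely a statement about boundary behaviour and does not follow from the radius dichotomy; it should also be checked when $b-a$ is itself infinitesimal. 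Until that lemma is established (or located in \cite{berz+shamseddine2003,reexp,lpcivita}), the existence half of your argument is incomplete. The uniqueness half is essentially fine: $g$ is determined on $I(a,b)$, and the values at the at most two missing endpoints are determined by the coefficients, which are in turn determined by $f$ on the interior via the identity principle for weakly convergent power series.
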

\begin{proof}
	See \cite{lpcivita}.
\end{proof}

With a slight abuse of notation, if $f$ is simple on $I(a,b)$, we will still denote by $f$ the simple function defined on $[a,b]_\civita$ that coincides with $f$ on $I(a,b)$.

From the algebra of simple functions it is possible to define the family of measurable functions.
Following \cite{lpcivita}, we do not require that measurable functions must be bounded.

\begin{definition}\label{measurable functions}
	Let $A\subset \civita$ be measurable and let $f: A \rightarrow \civita$. 
	The function $f$ is measurable iff for all $\varepsilon\in \civita$, $\varepsilon > 0$, there exists a sequence of mutually disjoint intervals $\{I_n\}_{n\in\N}$ such that
	\begin{enumerate}
		\item $\bigcup_{n\in\N} I_n \subset A$;
		\item $\sum_{n\in\N} l(I_n)$ strongly converges in $\civita$;
		\item $\m(A)-\sum_{n\in\N} l(I_n)<\varepsilon$;
		\item for all $n \in \N$, $f$ is simple on $I_n$.
	\end{enumerate}
	
	We will denote by $\meas(A)$ the set of measurable functions on $A$.
\end{definition}

Since every simple function has an antiderivative, it is possible to define the integral of a simple function over an interval by imposing the validity of the fundamental theorem of calculus.
The integral of a measurable function over a measurable set can then be obtained as a limit of the integrals of simple functions over a sequence of intervals satisfying Definition \ref{measurable functions}.

\begin{definition}\label{def integral}
	If $f$ is a simple function over $I(a,b)$ whose antiderivative is $F$, then
	$$
	\int_{I(a,b)} f(x)  = \lim_{x\rightarrow b} F(x) - \lim_{x \rightarrow a} F(x).
	$$
	Notice that the two limits in the previous equality are well-defined, since $F$ is simple on $I(a,b)$ and, thanks to Proposition \ref{prop simple}, $F$ can be extended to a simple function on $[a,b]_\civita$.
	
	If $A \subset \civita$ is a measurable set and $f: A \rightarrow \civita$ is a measurable function, then define
	$$
	F_A= \left\{ \{I_n\}_{n\in\N} : \bigcup_{n \in \N} I_n \subseteq A,\ I_n \text{ are mutually disjoint and } \forall n \in \N\ f \text{ is simple on } I_n \right\}.
	$$
	The integral of $f$ over $A$ is defined as
	$$
	\int_{A} f(x)  = \lim_{\{I_n\}_{n\in\N}\in F_A,\ \sum_{n \in \N} I_n \rightarrow \m(A)} \left( \sum_{n \in \N}  \int_{I_n} f(x) \right)
	$$
	whenever the limit on the right side of the equality is defined (and possibly equal to $\pm \infty$ whenever the sequence $k \mapsto \sum_{n \leq k}  \int_{I_n} f(x)$ diverges), and it is undefined otherwise.
\end{definition}

The integral on the Levi-Civita field is coherent with the Lebesgue integral.

\begin{lemma}\label{lemma coherence}
	If $a, b \in \R$ and $f \in \an([a,b])$, then $\ext{\infty}{f}$ is measurable and
	$$\int_{[a,b]} \ext{\infty}{f}  = \int_{[a,b]} f(x) dx .$$
	Moreover, if $I(c,d) \subseteq [a,b]_\civita$,  then
	$$\int_{I(c,d)}\ext{\infty}{f}  \approx \int_{[c[0],d[0]]} f(x) dx .$$
\end{lemma}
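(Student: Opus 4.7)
The plan is to reduce the statement to a finite number of computations on intervals small enough for a single Taylor series to represent $\ext{\infty}{f}$, and then to piece the results together using Definition \ref{def integral}.

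First I would use compactness of $[a,b]$ together with real analyticity of $f$ to extract a uniform real radius of convergence $\rho > 0$ such that for every $r \in [a,b]$ the Taylor series of $f$ at $r$ converges on the real interval $(r-\rho, r+\rho)$. Pick a partition $a = x_0 < x_1 < \cdots < x_N = b$ with $x_j - x_{j-1} < \rho$, and set $I_1 = [a,x_1)_\civita, \dots, I_{N-1} = [x_{N-2},x_{N-1})_\civita, I_N = [x_{N-1},b]_\civita$. On each $I_j$ I would argue that the canonical extension $\ext{\infty}{f}$ coincides with the single power series
$$P_j(x) = \sum_{k \geq 0} \frac{f^{(k)}(x_{j-1})}{k!}(x - x_{j-1})^k,$$
which weakly converges on $I_j$ because real-coefficient power series with positive real radius of convergence $\rho$ converge weakly on the corresponding Levi-Civita interval of any strictly smaller radius (a standard fact from \cite{lpcivita, analysislcf}). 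The identification $\ext{\infty}{f}(y) = P_j(y)$ on nearstandard $y = r + \varepsilon \in I_j$ follows from the real analytic re-expansion identity $\sum_k f^{(k)}(r)\varepsilon^k/k! = \sum_k f^{(k)}(x_{j-1})(y-x_{j-1})^k/k!$, valid inside the common real radius of convergence. Hence $\ext{\infty}{f} \in \P(I_j)$ for each $j$, and since $\sum_j l(I_j) = b-a = \m([a,b]_\civita)$, Definition \ref{measurable functions} gives measurability.

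Second, I would compute the integral. By Definition \ref{def integral},
$$\int_{[a,b]_\civita} \ext{\infty}{f} = \sum_{j=1}^N \int_{I_j} P_j(x) = \sum_{j=1}^N \bigl(F_j(x_j) - F_j(x_{j-1})\bigr),$$
where $F_j(x) = \sum_{k \geq 0} f^{(k)}(x_{j-1})(x - x_{j-1})^{k+1}/(k+1)!$ is the antiderivative of $P_j$ provided by Proposition \ref{prop simple}. Because the endpoints $x_{j-1}, x_j$ are real and $f$ is real analytic, $F_j(x_j) - F_j(x_{j-1}) = \int_{x_{j-1}}^{x_j} f(t)\,dt$ in the usual real sense (this is just the statement that the real Taylor series of an antiderivative of $f$ at $x_{j-1}$ evaluates correctly at $x_j$, which lies inside the radius of convergence). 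Telescoping yields $\int_{[a,b]_\civita} \ext{\infty}{f} = \int_a^b f(x)\,dx$.

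For the second assertion, given $I(c,d) \subseteq [a,b]_\civita$, set $\alpha = c[0] = \sh{c}$ and $\beta = d[0] = \sh{d}$; both lie in $[a,b]$. Applying the first part of the lemma to the restriction $f|_{[\alpha,\beta]}$ gives $\int_{[\alpha,\beta]_\civita} \ext{\infty}{f} = \int_\alpha^\beta f(x)\,dx$. The symmetric difference $I(c,d) \Delta [\alpha,\beta]_\civita$ is contained in the union of at most two intervals of infinitesimal length (one between $c$ and $\alpha$, one between $d$ and $\beta$). Since $f$ is bounded by some real constant $M$ on $[a,b]$ and the Taylor remainder on nearstandard points is infinitesimal, $|\ext{\infty}{f}| \leq M+1$ on all of $[a,b]_\civita$; therefore the integral of $\ext{\infty}{f}$ over each infinitesimal leftover interval is infinitesimal, and the two sides differ only by an infinitesimal, giving the claimed $\approx$.

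The main obstacle will be step one: getting the global representation of $\ext{\infty}{f}$ as a piecewise power series on Levi-Civita intervals with real endpoints, which requires both the uniform real radius of convergence (compactness) and the weak convergence of real-coefficient series on $\civita$-intervals strictly inside that radius. Once measurability in this form is in hand, the integral computation reduces to an identity about real Taylor series, and the general $I(c,d)$ statement is a soft comparison via the infinitesimal boundary slivers.
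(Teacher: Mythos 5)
Your argument is essentially correct, but note that the paper does not prove this lemma at all: it is recalled as a known coherence result from the existing integration theory on the Levi-Civita field (see \cite{lpcivita} and \cite{berz+shamseddine2003}), so you have in effect reconstructed the standard proof rather than followed a proof given in this paper. Your reconstruction is the right one: the uniform real radius of convergence from compactness, the finite partition into $\civita$-intervals of real length smaller than that radius, the identification of $\ext{\infty}{f}$ with a single weakly convergent power series on each piece (which gives measurability with the defect in Definition \ref{measurable functions} equal to zero), the evaluation of each $\int_{I_j}$ through the antiderivative at real endpoints, and the treatment of $I(c,d)$ by comparison with $[\sh{c},\sh{d}]_\civita$ up to two slivers of infinitesimal length on which the finitely bounded integrand contributes only an infinitesimal. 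Two steps deserve a sharper justification than you give. First, the identity $\ext{\infty}{f}(r+\varepsilon)=P_j(r+\varepsilon)$ is not a ``real analytic re-expansion identity'': $\varepsilon$ is a non-real infinitesimal, so you need the re-expansion theorem for power series on the Levi-Civita field (see \cite{reexp}, or the convergence results in \cite{convergence,lpcivita}), applied to the real-coefficient series $P_j$, together with the observation that $P_j^{(k)}(r)=f^{(k)}(r)$ because $P_j$ agrees with $f$ on the real points near $x_{j-1}$. Second, the final comparison uses finite additivity of the non-Archimedean integral over disjoint measurable pieces and the bound $\left|\int_{I} g\right| \leq \sup_{I}|g| \cdot l(I)$ for simple $g$; both are established in \cite{berz+shamseddine2003} and should be cited rather than taken as obvious. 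With those two references supplied, your proof is complete and has the advantage of being self-contained, whereas the paper's choice to quote the lemma keeps the exposition short at the cost of sending the reader to the earlier papers.
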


However, notice that extensions of the form $\ext{k}{f}$, with $k \in \N$, are not measurable over sets of a non-infinitesimal measures, since they are not simple over such sets.

In analogy with the real measure theory, it is possible to introduce spaces of measurable functions whose $p$-th power has a well-defined integral.

\begin{definition}
	Let $A \subset \civita$ be a measurable set.
	If $1\leq p < \infty$, define
	$$\L^p(A)=\left\{ [f] : f \text{ and } f^p \text{ are measurable}, \int_A |f|^p \text{ is defined and } \int_A |f|^p  < + \infty \right\}.$$
	If $f \in \L^p(A)$, then define $\norm{f}_p = \left(\int_A |f|^p \right)^{1/p}$.
\end{definition}

The properties of the $\L^p$ spaces are studied in detail in \cite{lpcivita}.

\subsection{A real-valued integral on the Levi-Civita field}

We now introduce a real-valued integral on the Levi-Civita field by exploiting a similar idea as the one used in Section \ref{sec loeb} for the introduction of the real-valued measure $\loeb$.
In order to avoid confusion with the $L$-integral introduced in Section \ref{sec Loeb integral}, we will denote this new integral as \civintegral. The letter $M$ suggests dependence of the integral upon the family of measurable functions over the Levi-Civita field introduced in Definition \ref{measurable functions}.

\begin{definition}\label{def L integrable for bounded functions}
	Let $A \subset \civita$ be a measurable set and let $f: A \rightarrow \civita$ be a bounded 
	function. 
	$f$ is \civint\ iff
	\begin{eqnarray*}
		&\displaystyle
		\sup\left\{\sh{\left(\int_A g \right)} : g\in\L^1(A) \text{ and } g(x)\leq f(x) \ \forall x \in A\right\}\\
		&=\\
		&\displaystyle
		\inf\left\{\sh{\left(\int_A g \right)} : g\in\L^1(A) \text{ and } g(x)\geq f(x) \ \forall x \in A\right\}.
	\end{eqnarray*}
	If $f$ is bounded and \civint, we define
	\begin{eqnarray*}
		\displaystyle
		\int_A^M f
		&=&
		\sup\left\{\sh{\left(\int_A g \right)} : g\in\L^1(A) \text{ and } g(x)\leq f(x) \ \forall x \in A\right\}\\
		&=&
		\inf\left\{\sh{\left(\int_A g \right)} : g\in\L^1(A) \text{ and } g(x)\geq f(x) \ \forall x \in A\right\}.
		\displaystyle
	\end{eqnarray*}
\end{definition}

Through this section, we will  prove that measurable functions in $\L^1$ are also $M$-integrable, and the standard part of the non-Archimedean integral is equal to the real-valued integral.
Currently, we can only prove this result for bounded measurable functions.

\begin{proposition}\label{prop L^1 bounded implica Loeb 1}
	Let $A \subset \civita$ be a measurable set.
	If $f \in \L^1(A)$ is bounded, then $f$ is \civint\ over $A$.
	Moreover, $\sh{\left(\int_{A} f\right)} = \int_{A}^M f$.
\end{proposition}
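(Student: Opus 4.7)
The plan is to verify the two equalities in Definition~\ref{def L integrable for bounded functions} directly, by observing that $f$ itself serves as a witness in both the sup-set and the inf-set, with common value $\sh{\left(\int_A f\right)}$. Since $f \in \L^1(A)$, the function $f$ belongs to both $\{g \in \L^1(A) : g \leq f\}$ and $\{g \in \L^1(A) : g \geq f\}$. Moreover, boundedness of $f$ combined with the $\L^1$ condition $\int_A |f| < +\infty$ ensures via $\left|\int_A f\right| \leq \int_A |f|$ that $\int_A f \in \civita_{fin}$, so that $\sh{\left(\int_A f\right)}$ is a well-defined real number.

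The key step is to invoke monotonicity of the non-Archimedean integral on $\L^1(A)$: if $g \in \L^1(A)$ and $g \leq f$, then $\int_A g \leq \int_A f$ in $\civita$, and since the standard part preserves the order on $\civita_{fin}$, one gets $\sh{\left(\int_A g\right)} \leq \sh{\left(\int_A f\right)}$. This bounds the supremum from above by $\sh{\left(\int_A f\right)}$, which is attained by choosing $g = f$. The symmetric argument shows that the infimum is bounded from below by $\sh{\left(\int_A f\right)}$ and attained at $g = f$. Hence the supremum and infimum coincide, so $f$ is \civint\ and $\int_A^M f = \sh{\left(\int_A f\right)}$.

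The main obstacle is to establish monotonicity of the non-Archimedean integral for two arbitrary functions $f, g \in \L^1(A)$ with $g \leq f$: Definition~\ref{def integral} defines $\int_A f$ and $\int_A g$ through limits of simple-function integrals over sequences of intervals on which $f$ and $g$ may not simultaneously be simple. One resolves this by passing to a common refinement of the two interval sequences on which both $f$ and $g$ are simple (which is possible because both functions are measurable in the sense of Definition~\ref{measurable functions}), applying monotonicity of the integral for simple functions on each interval, and then taking the limit.
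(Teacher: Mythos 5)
Your proof is correct and follows essentially the same route as the paper's: you note that $f$ itself belongs to both the lower and the upper family, so the supremum and infimum pinch $\sh{\left(\int_A f\right)}$, with monotonicity of the non-Archimedean integral (which the paper leaves implicit, being available from the Shamseddine--Berz theory it builds on) supplying $\sup \leq \inf$. The only small inaccuracy is your parenthetical claim that boundedness plus $\int_A |f| < +\infty$ forces $\int_A f \in \civita_{fin}$ --- the condition in the definition of $\L^1(A)$ only says the integral is an element of $\civita$, which may be infinitely large (e.g.\ $f \equiv 1$ on $[0,d^{-1}]_\civita$) --- but this is immaterial, since your argument works verbatim with $\sh{}$ taking values in $\R \cup \{+\infty,-\infty\}$.
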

\begin{proof}
	If $f \in \L^1(A)$, then
	$$
	f \in
	\left\{g\in\L^1(A) : g(x)\leq f(x) \ \forall x \in A\right\}
	\cap
	\left\{g\in\L^1(A) : g(x)\geq f(x) \ \forall x \in A\right\}.
	$$
	Thus $\sh{\left(\int_{A} f\right)} \leq \int_{A}^M f \leq \sh{\left(\int_{A} f\right)}$, i.e. $\sh{\left(\int_{A} f\right)} = \int_{A}^M f$.
\end{proof}

\begin{corollary}\label{prop L^p bounded implica Loeb p}
	Let $A \subset \civita$ be a measurable set.
	Then, for all $1\leq p <+\infty$, if $f \in \L^p(A)$ is bounded, $f^p$ is \civint\ and $\int_A^M f^p = \sh\left(\int_A^M f^p\right)$.
\end{corollary}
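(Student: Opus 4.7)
The plan is to deduce this corollary directly from Proposition \ref{prop L^1 bounded implica Loeb 1} by verifying that $f^p$ lands in $\L^1(A)$ and is itself bounded, so that the hypothesis of the proposition applies to $f^p$ in place of $f$.

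First, I would unpack what membership in $\L^p(A)$ buys us. By the definition of $\L^p(A)$, if $f\in\L^p(A)$ then both $f$ and $f^p$ are measurable and $\int_A |f|^p$ is defined and strictly less than $+\infty$. To conclude that $f^p\in\L^1(A)$, I need to check the three defining clauses for $\L^1$: measurability of $f^p$ (immediate from $f\in\L^p(A)$); measurability of $(f^p)^1=f^p$ (same); and that $\int_A|f^p|$ is defined and finite, which follows from $|f^p|=|f|^p$ together with $\int_A|f|^p<+\infty$.

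Next, I would handle boundedness. Since $f\colon A\to \civita$ is bounded, there exists some $M\in\civita$ with $|f(x)|\leq M$ for every $x\in A$. Then $|f^p(x)|=|f(x)|^p\leq M^p$ in $\civita$, so $f^p$ is also bounded on $A$. With $f^p\in\L^1(A)$ and $f^p$ bounded in hand, Proposition \ref{prop L^1 bounded implica Loeb 1} applied to $f^p$ yields that $f^p$ is \civint\ and
\[
\int_A^M f^p \;=\; \sh{\left(\int_A f^p\right)},
\]
which is the claimed equality (reading the displayed formula of the corollary as having the intended $\int_A f^p$ on the right-hand side rather than a tautological $\int_A^M f^p$).

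I do not foresee a genuine obstacle here: the corollary is essentially a bookkeeping consequence of the definition of $\L^p(A)$ and of the previously established Proposition \ref{prop L^1 bounded implica Loeb 1}. The only thing worth being careful about is that "bounded" in this setting means bounded in $\civita$ rather than in $\R$, and $M^p$ may well be a non-real element of $\civita$; but this is irrelevant because the hypothesis of Proposition \ref{prop L^1 bounded implica Loeb 1} only requires boundedness in $\civita$, not boundedness by a real number.
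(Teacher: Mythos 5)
Your proposal is correct and takes essentially the same route as the paper: both proofs reduce the statement to checking that $f^p$ is a bounded element of $\L^1(A)$ and then apply Proposition \ref{prop L^1 bounded implica Loeb 1} to $f^p$, and you rightly read the right-hand side of the claimed equality as $\sh\left(\int_A f^p\right)$. The only cosmetic difference is that the paper invokes the triangle inequality for the non-Archimedean integral (Corollary 4.12 of Shamseddine--Berz) in verifying $f^p\in\L^1(A)$, whereas you argue directly from $|f^p|=|f|^p$ and the definition of $\L^p(A)$.
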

\begin{proof}
	If $f \in \L^p(A)$ is bounded, then $|f|^p \in \L^1(A)$ is also bounded. 
	By Corollary 4.12 of \cite{berz+shamseddine2003}, $\left| \int_A f^p \right| \leq \int_A \left| f^p \right|$. Since $f \in \L^p(A)$, $\int_A \left| f^p \right|$ is defined and it is not equal to $+\infty$.
	Thus $f^p \in \L^1(A)$. By Proposition \ref{prop L^1 bounded implica Loeb 1} we conclude that $\int_A^M f^p = \sh\left(\int_A^M f^p\right)$, as desired.
\end{proof}

The definition of \civint\ functions can be extended to unbounded functions in the usual way.

\begin{definition}
	Let $A \subset \civita$ be a measurable set and let $f: A \rightarrow \civita$ be an unbounded, non-negative function. 
	We say that $f$ is \civint\ iff
	$$
	\sup\left\{\sh{\left( \int_A g \right)}  : g\in\L^1(A) \text{ and } g(x)\leq f(x) \ \forall x \in A\right\}<+\infty
	$$
	If $f$ is unbounded, non-negative and \civint, we define
	$$
	\int_A^M f
	=
	\sup\left\{\sh{\left( \int_A g \right)} : g\in\L^1(A) \text{ and } g(x)\leq f(x) \ \forall x \in A\right\}.
	$$
	
	If $f$ is unbounded and non-positive, we say that $f$ is \civint\ iff $f^-$ is, and we define $\int_A^M f = -\int_A^M f^-$.
	
	If $f$ is unbounded, we say that $f$ is \civint\ iff $f^+$ and $f^-$ are \civint.
	If $f$ is unbounded and \civint, we define $\int_A^M f = \int_A^M f^+ -\int_A^M f^- $.
\end{definition}

The real-valud integral is linear.

\begin{proposition}\label{prop linearita l integral}
	If $A\subset \civita$ is a measurable set,
	then for every \civint\ functions $f$ and $g$ over $A$ and for every $x,y \in \civita_{fin}$,
	\begin{equation}\label{eqn linearita}
		\int_A^M (xf+yg) = \sh{x}\int_A^M f + \sh{y}\int_A^M g.
	\end{equation}
\end{proposition}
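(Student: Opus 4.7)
My plan is to decompose the linearity statement into two separate facts: additivity $\int_A^M(f+g) = \int_A^M f + \int_A^M g$ and $\R$-scalar homogeneity $\int_A^M(xf) = \sh{x}\int_A^M f$ for every $x\in\civita_{fin}$. Once both are established, \eqref{eqn linearita} follows by applying them twice. In both cases I would first reduce to non-negative $f$ and $g$ via the usual $f=f^{+}-f^{-}$ decomposition, exploiting that the $M$-integral of a signed function is defined as the difference of integrals of its positive and negative parts.

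For additivity I would use a squeeze argument at the level of $\L^{1}(A)$ approximations. Given real $\varepsilon>0$, by Definition \ref{def L integrable for bounded functions} (and its unbounded extension) I can pick $\underline{f},\overline{f},\underline{g},\overline{g}\in\L^{1}(A)$ with $\underline{f}\leq f\leq\overline{f}$ and $\underline{g}\leq g\leq\overline{g}$ whose Levi-Civita integrals have standard parts within $\varepsilon$ of $\int_A^M f$ and $\int_A^M g$. Then $\underline{f}+\underline{g}\leq f+g\leq\overline{f}+\overline{g}$, both sums lie in $\L^{1}(A)$, and linearity of the Levi-Civita integral (Corollary 4.12 in \cite{berz+shamseddine2003}) combined with additivity of $\sh$ on $\civita_{fin}$ yields $\sh\left(\int_A(\overline{f}+\overline{g})-\int_A(\underline{f}+\underline{g})\right)\leq 2\varepsilon$. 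Letting $\varepsilon\to0$ simultaneously proves $f+g$ is $M$-integrable and that its $M$-integral equals $\int_A^M f+\int_A^M g$.

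For scalar homogeneity I would write $x=\sh{x}+\eta$ with $\eta\simeq0$. Invoking additivity (just proved) it suffices to establish $\int_A^M(rf)=r\int_A^M f$ for $r\in\R$ and $\int_A^M(\eta f)=0$ for $\eta\simeq0$. The real case is immediate: for $r>0$ the map $g\mapsto rg$ is a bijection on $\L^{1}(A)$ that preserves the ordering $g\leq f\Leftrightarrow rg\leq rf$ and satisfies $\sh(\int_A rg)=r\,\sh(\int_A g)$; the case $r<0$ follows by swapping the sup and inf in the definition, and $r=0$ is trivial. For the infinitesimal case, choose $g_U\in\L^{1}(A)$ with $|f|\leq g_U$ and $\sh(\int_A g_U)<+\infty$ (here I rely on $M$-integrability of $f^\pm$ and the extension of the definition to non-negative unbounded functions); then $|\eta f|\leq|\eta|g_U$, and $\int_A \eta g_U = \eta\int_A g_U$ is infinitesimal since it is a product of an infinitesimal and a finite element of $\civita$. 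Taking standard parts forces $\int_A^M(\eta f)=0$.

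The main obstacle is the unbounded case, because the definition of $M$-integrability for unbounded $f\geq 0$ provides only lower $\L^{1}$-approximations $g\leq f$ with finite $\sh(\int_A g)$, not upper ones. The squeeze argument for additivity therefore needs an auxiliary step: given $h\in\L^{1}(A)$ with $h\leq f+g$, I would split $h=\min(h,f)+(h-\min(h,f))$, verify that both summands lie in $\L^{1}(A)$ (using that $f,g$ are measurable and that the $\civita$-integration framework of \cite{lpcivita} is closed under $\min$ with measurable functions), and observe that the first summand is dominated by $f$ while the second is dominated by $g$. This measurable-splitting step, rather than the clean Levi-Civita-linearity of integrals, is where the argument is most delicate, and it is also what ensures $xf+yg$ is itself $M$-integrable so that the left-hand side of \eqref{eqn linearita} is defined.
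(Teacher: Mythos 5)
Your bounded-case squeeze and the reduction to positive and negative parts are sound and close in spirit to the paper's argument, which for non-negative $f,g$ and non-negative $x,y$ works directly with the defining suprema and infima: minorants of the form $xh_1+yh_2$ with $h_1\le f$, $h_2\le g$ give one inequality, majorants give the other, and the decomposition into $(xf)^{\pm}$, $(yg)^{\pm}$ handles the general case. The genuine problem is your treatment of the unbounded case. The splitting $h=\min(h,f)+(h-\min(h,f))$ does not produce elements of $\L^{1}(A)$: membership in $\L^{1}(A)$ requires $\m$-measurability in the sense of Definition \ref{measurable functions}, i.e.\ agreement with convergent power series on a countable family of intervals exhausting $A$ up to $\varepsilon$, and this class is not closed under $\min$ (already $\min(x,-x)=-|x|$ is not simple on any interval containing $0$, since simple functions are differentiable at interior points); worse, your $f$ is only assumed \civint, which is a purely real-valued condition on $\L^{1}$-approximants and gives no $\m$-measurability of $f$ whatsoever, so $\min(h,f)$ has no reason to be Levi-Civita integrable. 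The closure-under-$\min$ property you attribute to \cite{lpcivita} is not available, so the step you yourself flag as the delicate one is exactly where the argument breaks.

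A second, smaller gap: for unbounded $f$ the definition of \civint\ supplies only minorants, so the majorant $g_U\in\L^{1}(A)$ with $|f|\le g_U$ and $\sh{\left(\int_A g_U\right)}<+\infty$ used in your infinitesimal-scalar step need not exist. That step is repairable without majorants: for an infinitesimal $\eta>0$ one has $h\le \eta f$ if and only if $\eta^{-1}h\le f$, and $\int_A \eta^{-1}h=\eta^{-1}\int_A h$, so finiteness of the supremum defining $\int_A^M f$ forces $\sh{\left(\int_A h\right)}\le 0$ for every minorant $h$ of $\eta f$, whence $\int_A^M(\eta f)=0$; the same pulling-the-scalar-through-the-supremum argument gives homogeneity for appreciable scalars. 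For additivity of unbounded non-negative \civint\ functions, however, you still need the reverse inequality $\sup_{h\le f+g}\sh{\left(\int_A h\right)}\le \int_A^M f+\int_A^M g$, and with only minorants available this is an inner-measure-type additivity that your proposal does not establish (the paper's proof simply asserts the corresponding identity of suprema for non-negative $f,g,x,y$); some substitute for the $\min$-splitting is needed before the unbounded case can be considered proved.
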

\begin{proof}
	If $f$ and $g$ are non-negative and if $x$ and $y$ are non-negative, then linearity of the non-Archimedean integral over the Levi-Civita field and the properties of the standard part entail the equalities
	\begin{eqnarray*}
		&\displaystyle
		\sup\left\{\sh{\left(\int_A h \right)} : h\in\L^1(A) \text{ and } h(x)\leq xf(x)+yg(x) \ \forall x \in A\right\}\\
		&=\\
		&\displaystyle
		\sh{x}\sup\left\{\sh{\left(\int_A h \right)} : h\in\L^1(A) \text{ and } h(x)\geq f(x) \ \forall x \in A\right\}\\
		&+\\
		&\displaystyle
		\sh{y}\sup\left\{\sh{\left(\int_A h \right)} : h\in\L^1(A) \text{ and } h(x)\geq g(x) \ \forall x \in A\right\}.
	\end{eqnarray*}
	This is sufficient to deduce that equation \eqref{eqn linearita} is satisfied.
	
	For arbitrary $f, g, x, y$, the desired property can be obtained by applying the previous part of the proof to $(xf)^+$, $(xf)^-$, $(yg)^+$ and $(yg)^-$.
\end{proof}

Finally, we are ready prove that all $\m$-measurable functions in $\L^1$, even those that are not bouunded, are \civint, and that their \civintegral\ is equal to the standard part of the integral of Definition \ref{def integral}.

\begin{proposition}\label{prop L^1 implica Loeb 1}
	Let $A \subset \civita$ be a measurable set.
	If $f \in \L^1(A)$ and if $\int_{A} |f| \in\civita_{fin}$, then $f$ is \civint\ and
	$$
	\int_A^M f = \sh{\left(\int_A f \right)}.
	$$
\end{proposition}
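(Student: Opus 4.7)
The plan is to observe that, under the hypotheses, $f$ itself is an admissible competitor in the supremum defining $\int_A^M$, and then to exploit monotonicity of the non-Archimedean integral to see that it is in fact extremal. I expect the only real work to be in reducing to the non-negative case and ensuring that the pieces $f^+,f^-$ retain the hypotheses of the theorem.

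First I would treat the non-negative case: assume $h\in\L^1(A)$ with $h\geq 0$ and $\int_A h\in\civita_{fin}$. By definition,
$$
\int_A^M h \;=\; \sup\left\{\sh{\left(\textstyle\int_A g\right)} : g\in\L^1(A),\ g\leq h \right\}.
$$
Since $h$ itself lies in $\L^1(A)$ and $h\leq h$, the value $\sh{\int_A h}$ appears inside the set, so $\int_A^M h\geq \sh{\int_A h}$. Conversely, monotonicity of the non-Archimedean integral (a consequence of Corollary 4.12 of \cite{berz+shamseddine2003} and Definition \ref{def integral}) gives $\int_A g\leq \int_A h$ whenever $g\leq h$ with both in $\L^1(A)$, so $\sh{\int_A g}\leq \sh{\int_A h}$; hence the supremum is bounded above by $\sh{\int_A h}$. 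Moreover this supremum is finite because the hypothesis $\int_A h\in\civita_{fin}$ makes $\sh{\int_A h}$ a real number. Thus $h$ is \civint\ and $\int_A^M h=\sh{\int_A h}$.

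Next I would reduce the general case to the non-negative one. Writing $f=f^+-f^-$, I would use the identities $f^+=(|f|+f)/2$ and $f^-=(|f|-f)/2$, together with linearity of the non-Archimedean integral on the Levi-Civita field, to see that $f^+,f^-\in\L^1(A)$. From $0\leq f^\pm\leq|f|$ and monotonicity we obtain $\int_A f^\pm\leq\int_A|f|\in\civita_{fin}$, so the preceding paragraph applies to both $f^+$ and $f^-$, yielding $M$-integrability of each and the identities $\int_A^M f^\pm=\sh{\int_A f^\pm}$.

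Finally, combining these equalities via linearity of $\int_A^M$ (Proposition \ref{prop linearita l integral}), linearity of the non-Archimedean integral on $\L^1(A)$, and additivity of the standard part on $\civita_{fin}$, we get
$$
\int_A^M f \;=\; \int_A^M f^+ - \int_A^M f^- \;=\; \sh{\left(\textstyle\int_A f^+\right)} - \sh{\left(\textstyle\int_A f^-\right)} \;=\; \sh{\left(\textstyle\int_A f\right)}.
$$
The main obstacle, and the only subtlety, is justifying monotonicity of the non-Archimedean integral for arbitrary (not necessarily simple) functions in $\L^1(A)$, so that the admissibility argument closes cleanly; this follows from the limit procedure in Definition \ref{def integral} applied to a common refinement of the partitions witnessing measurability of $g$ and $h$, but it must be stated explicitly to avoid appealing only to the simple-function case.
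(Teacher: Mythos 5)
Your proposal follows essentially the same skeleton as the paper's proof: use $f$ (respectively $f^+$, $f^-$) as its own competitor to get $\sh{\left(\int_A f\right)}$ as a lower bound for the supremum, cap the supremum by a comparison argument, and assemble the signed case from the positive and negative parts via linearity (Proposition \ref{prop linearita l integral}). The one real difference of route is in the cap: you invoke monotonicity of the non-Archimedean integral on $\L^1(A)$ directly, while the paper compares every minorant with the shifted function $\tilde{f}=f+\varepsilon$, where $\varepsilon\cdot\m(A)$ is a positive infinitesimal, and then discards $\sh{(\varepsilon\cdot\m(A))}=0$. Both devices rest on the same underlying ingredients, namely linearity of the integral of Definition \ref{def integral} in the unbounded setting of \cite{lpcivita} together with positivity (Corollary 4.12 of \cite{berz+shamseddine2003} applied to $h-g\geq 0$), so your parenthetical citation is the right source; note, however, that your alternative sketch via common refinements is not self-contained, because positivity of the integral of a non-negative simple function on an interval of $\civita$ is a power-series fact (mean value theorem for power series, \cite{reexp,berz+shamseddine2003}) and not a formal consequence of the limit procedure, so if you go that way the power-series input must be cited explicitly.

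There is one genuine gap as written. For \emph{bounded} functions, $M$-integrability is Definition \ref{def L integrable for bounded functions}: the supremum over minorants must \emph{equal} the infimum over majorants. Your ``non-negative case'' works only with the supremum, which is the definition for \emph{unbounded} non-negative functions; consequently, if $f$ (or $f^+$ or $f^-$) happens to be bounded, your argument does not yet establish that it is \civint\ in the required sense, and the final appeal to Proposition \ref{prop linearita l integral} presupposes exactly that. The repair is immediate: either run the symmetric monotonicity argument on the majorant side (every majorant $g\geq h$ in $\L^1(A)$ satisfies $\sh{\left(\int_A g\right)}\geq\sh{\left(\int_A h\right)}$, and $h$ itself is a majorant, so the infimum also equals $\sh{\left(\int_A h\right)}$), or simply quote Proposition \ref{prop L^1 bounded implica Loeb 1} for the bounded pieces, which is how the paper organizes the case split. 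Either way, the bounded/unbounded distinction must appear, since the two definitions of the $M$-integral are not interchangeable.
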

\begin{proof}
	If $f \in \L^1(A)$ is bounded, then the desired result is entailed by Proposition \ref{prop L^1 bounded implica Loeb 1}.
	
	Suppose then that $f$ is unbounded and non-negative, and that $\lambda\left(\int_{A} |f|\right)\geq 0$.
	As a consequence of the latter hypothesis we have also $\sh{\left( \int_A f \right)}<+\infty$, so that $f$ is \civint.
	Moreover,
	$$
	f \in
	\left\{g\in\L^1(A) : g(x)\leq f(x) \ \forall x \in A\right\}
	$$
	so that $\sh{\left(\int_{A} f\right)} \leq \int_{A}^M f$.
	
	Let now $\varepsilon\in \civita$, $\varepsilon > 0$ satisfying $0<\lambda(\varepsilon \cdot \m(A))<+\infty$.
	Define also a function $\tilde{f}: A \rightarrow \civita$ by posing $\tilde{f}(x)=f(x)+\varepsilon$.
	Then $\tilde{f}\not\in \left\{g\in\L^1(A) : g(x)\leq f(x) \ \forall x \in A\right\}$.
	As a consequence,
	$$
	\int_A^M f \leq \sh{\left(\int_A \tilde{f} \right)} 
	=
	\sh{\left(\int_A f + \int_A \varepsilon \right)}
	=
	\sh{\left( \int_A f \right)} + \sh{(\varepsilon \cdot \m(A))}.
	$$
	Since $0<\lambda(\varepsilon \cdot \m(A))<+\infty$, $\sh{(\varepsilon \cdot \m(A))}=0$ and $\int_A^M f \leq \sh{\left( \int_A f \right)}$.
	
	We conclude  $\sh{\left(\int_{A} f\right)} \leq \int_{A}^M f \leq \sh{\left( \int_A f \right)}$, as desired.
	
	If $f$ is unbounded, non-positive and with $\lambda\left(\int_{A} |f|\right)\geq 0$, we can apply the above argument to $-f$.
	
	For an arbitrary $f\in L^1(A)$, 
	the previous arguments entail that $\int_A^M f^\pm =  \sh{\left(\int_{A} f^\pm\right)}$, so that
	$$
	\int_A^M f = \int_A^M f^+ - \int_A^M f^-
	=
	\sh{\left(\int_{A} f^+\right)}-\sh{\left(\int_{A} f^-\right)}
	=
	\sh{\left(\int_{A} f^+ - \int_{A} f^-\right)}
	=
	\sh{\left(\int_{A} f\right)},
	$$
	as desired.
\end{proof}

As a consequence of the coherence between the real-valued integral and the integral in $\civita$, we obtain the following result, analogous to Corollary \ref{prop L^p bounded implica Loeb p}.

\begin{corollary}\label{prop L^p implica Loeb p}
	Let $A \subset \civita$ be a measurable set.
	If $f \in \L^p(A)$, then $f^p$ is \civint\ and $\int_A^M f^p = \sh\left(\int_A f^p\right)$.
\end{corollary}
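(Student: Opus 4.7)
The plan is to reduce the corollary to Proposition \ref{prop L^1 implica Loeb 1}, in exactly the same way the bounded analogue (Corollary \ref{prop L^p bounded implica Loeb p}) was reduced to Proposition \ref{prop L^1 bounded implica Loeb 1}. The only new wrinkle, compared to the bounded version, is that one must check that $f^p$ satisfies the finiteness condition $\int_A |f^p| \in \civita_{fin}$ that Proposition \ref{prop L^1 implica Loeb 1} requires in addition to $\L^1$-membership.

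First, I would show that $f^p \in \L^1(A)$. By the hypothesis $f \in \L^p(A)$ we have that $f$ and $f^p$ are both measurable and that $\int_A |f|^p$ is defined in $\civita$ and is not equal to $+\infty$. Measurability of $f^p$ together with Corollary 4.12 of \cite{berz+shamseddine2003} gives $\left|\int_A f^p\right| \leq \int_A |f^p| = \int_A |f|^p$, so that $\int_A |f^p|$ is defined and finite. This places $f^p$ in $\L^1(A)$.

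Second, I would apply Proposition \ref{prop L^1 implica Loeb 1} to $f^p$. Under the standing assumption of the corollary that $f \in \L^p(A)$ in the sense that $\int_A |f|^p \in \civita_{fin}$ (the natural $\L^p$ analogue of the finiteness condition built into Proposition \ref{prop L^1 implica Loeb 1}), the hypothesis $\int_A |f^p| \in \civita_{fin}$ is automatically satisfied. Proposition \ref{prop L^1 implica Loeb 1} then tells us that $f^p$ is $M$-integrable and that $\int_A^M f^p = \sh{\left(\int_A f^p\right)}$, which is exactly the desired conclusion.

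The only real obstacle is verifying that the finiteness assumption needed to invoke Proposition \ref{prop L^1 implica Loeb 1} is in place; once that is settled the rest is bookkeeping. If $p$ is odd and $f$ changes sign, one splits $f^p = (f^+)^p - (f^-)^p$ and applies the argument to each piece, using linearity of $\int^M$ (Proposition \ref{prop linearita l integral}). The remainder is simply matching definitions.
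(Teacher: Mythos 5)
Your proof is correct and is essentially the paper's intended argument: the corollary is stated there without a separate proof, as the direct analogue of Corollary \ref{prop L^p bounded implica Loeb p}, namely one checks $f^p\in\L^1(A)$ via Corollary 4.12 of \cite{berz+shamseddine2003} and then invokes Proposition \ref{prop L^1 implica Loeb 1}. Your attention to the hypothesis $\int_A |f^p| \in \civita_{fin}$ is well placed, since that condition is required to apply Proposition \ref{prop L^1 implica Loeb 1} and is only tacitly assumed (indeed omitted from the statement) in the paper.
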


\subsection{Integrals over nonmeasurable sets}

It is also possible to define a real-valued integral of functions defined upon some non-measurable sets $A \subseteq \civita$.
For a relevant class of non-measurable sets, the definition is analogous to that of the Riemann integral over unbounded sets.

\begin{definition}\label{def integral over nonmeasurable sets}
	For every $q \in\Q$, define $A(q) = \{ x \in \civita : \lambda(x) \geq q \}$ and $B(q) = \{x \in \civita : \lambda(x)>q\}$.
	
	We say that a function $f: A(q) \rightarrow \civita$ is \civint\ if
	$$
	\lim_{t\rightarrow\infty} \int_{[-td^q,td^q]}^M f
	$$
	exists finite in $\R$.
	If $f$ is \civint\ over $A(q)$, we define $\int_{A(q)}^M f(x) = \lim_{t\rightarrow\infty} \int_{[-td^q,td^q]}^M f(x)$.
	
	Similarly, we say that a function $f: B(q) \rightarrow \civita$ is \civint\ if the real limit
	$$
	\lim_{t\rightarrow q^+} \int_{[-d^t,d^t]}^M f
	$$
	exists finite in $\R$.
	If $f$ is \civint\ over $B(q)$, we define $\int_{B(q)}^M f(x) = \lim_{t\rightarrow q^+} \int_{[-d^t,d^t]}^M f(x)$.
	
	Finally, we say that a function $f : \civita \rightarrow \civita$ is \civint\ if the real limit
	$$
	\lim_{t\rightarrow -\infty} \int_{[-d^t,d^t]}^M f
	$$
	exists finite in $\R$.
	If $f$ is \civint\ over $\civita$, we define $\int_{\civita}^M f(x) = \lim_{t\rightarrow -\infty} \int_{[-d^t,d^t]}^M f(x)$.
	
	These definitions are extended in the expected way to sets of the form $A(q) \cap [a,b]_\civita$, $B(q)\cap [a,b]_\civita$, $[a,+\infty]_\civita$ and $[-\infty,b]_\civita$.
\end{definition}

These integrals over non-measurable subsets of $\civita$ are linear, as a consequence of linearity of the integral over measurable sets.

\begin{proposition}\label{prop linearita l integral 2}
	If $A\subset \civita$ is a measurable set or if it is a set of the form $A(q)$ or $B(q)$ for some $q\in\Q$,
	then every \civint\ functions $f$ and $g$ over $A$ and every $x,y \in \civita_{fin}$ satisfy equality \eqref{eqn linearita}.
\end{proposition}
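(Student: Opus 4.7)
The plan is to reduce the statement to the already established linearity on measurable domains (Proposition \ref{prop linearita l integral}) and then transfer it to the non-measurable sets $A(q)$ and $B(q)$ through the real-valued limit appearing in Definition \ref{def integral over nonmeasurable sets}. When $A$ is measurable there is nothing to do, since the claim is exactly Proposition \ref{prop linearita l integral}. I therefore focus on the case $A = A(q)$, the case $A = B(q)$ being entirely analogous with $\lim_{t\to\infty}$ replaced by $\lim_{t\to q^+}$, and the corresponding cases truncated to $[a,b]_\civita$ following the same pattern.

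First I would observe that for every $t \in \R$, $t>0$, the interval $[-td^q, td^q]_\civita$ is measurable, and that both $f$ and $g$ restricted to this interval are $M$-integrable by the very definition of $M$-integrability of $f$ and $g$ over $A(q)$. Applying Proposition \ref{prop linearita l integral} on the measurable set $[-td^q, td^q]_\civita$ yields, for every $x, y \in \civita_{fin}$,
\[
\int_{[-td^q,td^q]}^M (xf+yg) = \sh{x}\int_{[-td^q,td^q]}^M f + \sh{y}\int_{[-td^q,td^q]}^M g.
\]
In particular, the left-hand side is a real number for every $t$, so $xf+yg$ is $M$-integrable on $[-td^q,td^q]_\civita$.

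Next I would take the limit as $t \to \infty$ on both sides. The two limits on the right-hand side exist in $\R$ because $f$ and $g$ are $M$-integrable on $A(q)$, and by standard linearity of real limits their linear combination converges to $\sh{x}\int_{A(q)}^M f + \sh{y}\int_{A(q)}^M g$. Consequently the limit of the left-hand side exists in $\R$ as well, which shows that $xf+yg$ is $M$-integrable on $A(q)$ and that
\[
\int_{A(q)}^M (xf+yg) = \sh{x}\int_{A(q)}^M f + \sh{y}\int_{A(q)}^M g.
\]

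I do not expect a substantial obstacle here: the entire argument is a two-step reduction, first to the measurable setting where linearity is already available, then to a real limit where linearity is automatic. The only subtlety is that the existence of the limit defining $\int_{A(q)}^M(xf+yg)$ must be deduced rather than assumed, but this is handled transparently once the corresponding limits for $f$ and $g$ are in hand. The same reduction applies verbatim to $B(q)$, to $\civita$ itself, and to the bounded variants of these domains by restricting $t$ to the appropriate interval.
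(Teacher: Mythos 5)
Your proposal is correct and follows essentially the same route as the paper: reduce to the linearity already proved on measurable sets (Proposition \ref{prop linearita l integral}) applied to the truncations $[-td^q,td^q]_\civita$, then pass to the real limit in Definition \ref{def integral over nonmeasurable sets} using linearity of limits of real functions. The paper states this in one sentence; you simply spell out the same two-step reduction in detail.
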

\begin{proof}
	We have proven this result under the hypothesis that $A$ is a measurable set in Proposition \ref{prop linearita l integral}.
	
	If $A = A(q)$ or $A=B(q)$ for some $q\in\Q$, then this is a consequence of the linearity of the integral over measurable sets and of the linearity of the real limit of functions.
\end{proof}

\begin{example}
	Let $a \in \R$, $a\ne -1$, and consider the function $f(x)=x^{a}$ defined on the set $\{x \in \civita_{fin} : x \geq 1\} = A(0) \cap [1,d^{-1}]_\civita$.
	These functions are analytic over intervals of the form $[1,n]_{\civita}$, with $n \in \N$.
	However, recall that they are not integrable over the non-measurable set $\{x \in \civita_{fin} : x \geq 1\}$.
	Nevertheless, Proposition \ref{prop L^1 bounded implica Loeb 1} entails the equality $\int_{[1,n]_{\civita}}^M x^a = \int_{[1,n]_{\civita}} x^a$ for every $n \in\N$.
	Moreover, by Lemma 4.10 of \cite{lpcivita}, $\int_{[1,n]_{\civita}} x^a = \int_{[1,n]} x^a \ dx = \frac{n^{a+1}}{a+1}-\frac{1}{a+1}$.
	
	A similar argument applies to the function $x \mapsto x^{-1}$. In this case, we obtain that $\int_{[1,n]_{\civita}}^M x^{-1} = \int_{[1,n]_{\civita}} x^{-1} = \int_{[1,n]} x^{-1} \ dx = \ln(n)$ for every $n \in\N$.
	
	As a consequence, $f(x)=x^{a}$ is \civint\ over $\{x \in \civita_{fin} : x \geq 1\}$ if and only if $a<-1$, in analogy to what happens for the corresponding real functions.
\end{example}

\subsection{\civint\ representatives of real continuous functions}
We will now establish an analogous of Theorem \ref{lifting thm loeb integral} for the \civintegral.
Recall that, for the non-Archimedean integral, the only analogue to Theorem \ref{lifting thm loeb integral} is Lemma \ref{lemma coherence}, that is only valid for real analytic functions of a bounded domain.
Instead, for the \civintegral\ we will prove that every canonical or non-canonical extension of a continuous function $f \in C^k([a,b])$ is \civint, and the \civintegral\ is equal to the Lebesgue integral of $f$ over $[a,b]$.

\begin{theorem}\label{lifting thm}
	Let $f \in C^k([a,b])$. 
	Then $\ext{h}{f}$ is \civint\ for every $0 \leq h \leq k$ and
	$$
	\int_{[a,b]_\civita}^M \ext{h}{f} = \int_{[a,b]} f(x) dx.
	$$
\end{theorem}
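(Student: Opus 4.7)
The strategy is to apply the sandwich characterization in Definition \ref{def L integrable for bounded functions}: I will bracket $\ext{h}{f}$ between pairs of $\L^1$ step functions whose non-Archimedean integrals have standard parts arbitrarily close to $\int_a^b f(x)\,dx$. First, $\ext{h}{f}$ is bounded on $[a,b]_\civita$, since $f \in C^k([a,b])$ makes each derivative $f^{(j)}$ bounded on the compact set $[a,b]$ and $(x-\sh{x})^j$ is infinitesimal for every $x \in [a,b]_\civita$ and $j \geq 1$. Consequently
\[
\ext{h}{f}(x) = f(\sh{x}) + \sum_{j=1}^{h} f^{(j)}(\sh{x})\frac{(x-\sh{x})^j}{j!} \simeq f(\sh{x})
\]
for every $x \in [a,b]_\civita$, the error being a finite sum of infinitesimals.

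Next, fix any real $\eta > 0$. By uniform continuity of $f$ on $[a,b]$, choose a partition $a = r_0 < r_1 < \cdots < r_N = b$ with $r_i \in \R$ whose mesh is small enough that $M_i - m_i < \eta$, where $m_i = \min_{[r_i,r_{i+1}]} f$ and $M_i = \max_{[r_i,r_{i+1}]} f$. On each $(r_i,r_{i+1})_\civita$ define the constant functions $g_\eta := m_i - \eta$ and $G_\eta := M_i + \eta$, extending them to the finitely many partition endpoints in any way that preserves $g_\eta(r_i) \leq f(r_i) \leq G_\eta(r_i)$ (possible because $\ext{h}{f}(r_i) = f(r_i) \in \R$). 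Constants are zero-order power series with real coefficients, hence simple on each $(r_i,r_{i+1})_\civita$; this makes $g_\eta, G_\eta \in \L^1([a,b]_\civita)$ in the sense of Definition \ref{measurable functions}. Since $\ext{h}{f}(x) - f(\sh{x})$ is infinitesimal and $f(\sh{x}) \in [m_i, M_i]$ whenever $\sh{x} \in [r_i, r_{i+1}]$, the pointwise sandwich $g_\eta(x) \leq \ext{h}{f}(x) \leq G_\eta(x)$ holds throughout $[a,b]_\civita$.

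Using Definition \ref{def integral} on each subinterval, the non-Archimedean integrals are the real numbers
\[
\int_{[a,b]_\civita} g_\eta = \sum_{i=0}^{N-1} (m_i-\eta)(r_{i+1}-r_i), \qquad \int_{[a,b]_\civita} G_\eta = \sum_{i=0}^{N-1} (M_i+\eta)(r_{i+1}-r_i),
\]
which coincide with their own standard parts and differ from the real Darboux sums of $f$ for this partition by at most $\eta(b-a)$. By Riemann integrability of the continuous function $f$, letting $\eta \to 0$ while refining the partition drives both quantities to $\int_a^b f(x)\,dx$. Combined with the trivial inequality between the supremum over $\L^1$ lower bounds of $\ext{h}{f}$ and the infimum over $\L^1$ upper bounds, this forces both extrema to equal $\int_a^b f(x)\,dx$, which is exactly the statement that $\ext{h}{f}$ is \civint\ with integral $\int_a^b f(x)\,dx$. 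The only nontrivial check is the pointwise sandwich at the finitely many partition endpoints, handled by $\ext{h}{f}(r_i) = f(r_i)$; the rest is bookkeeping once one observes that constants are simple and that piecewise constant functions on finitely many intervals lie in $\L^1$.
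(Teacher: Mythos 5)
Your proof is correct and follows essentially the same route as the paper's: bracket $\ext{h}{f}$ between extensions to $[a,b]_\civita$ of real step functions whose integrals converge to $\int_a^b f(x)\,dx$ by Riemann integrability, and invoke the sup--inf characterization of the \civintegral. The only difference is cosmetic: you build the step functions explicitly from a uniform-continuity partition with an explicit $\eta$-margin to absorb the infinitesimal gap $\ext{h}{f}(x)-f(\sh{x})$, a point the paper's proof passes over with ``by definition,'' so your write-up is if anything slightly more careful at that step.
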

\begin{proof}
	Recall that a function $s: [a,b] \rightarrow \R$ is a step function if and only if there exist some $n \in\N$, a partition of $[a,b]$ into $n$ intervals $I_1, \ldots, I_n$ and $n$ real numbers $s_1, \ldots, s_n$, such that $s = \sum_{i\leq n} s_i \chi_{I_i}$.
	Step functions have a well-defined Riemann integral equal to $\sum_{i\leq n} s_i l(I_i)$.
	
	If $I_1, \ldots, I_n$ is a partition of $[a,b]$, define $J_1, \ldots, J_n$ as the partition of $[a,b]_{\civita}$ satisfying the following properties:
	\begin{itemize}
		\item if $I_i = [a_i,b_i]$, then $J_i = [a_i,b_i]_{\civita}$;
		\item if $I_i = (a_i,b_i]$, then $J_i = (a_i,b_i]_{\civita}$;
		\item if $I_i = [a_i,b_i)$, then $J_i = [a_i,b_i)_{\civita}$;
		\item if $I_i = (a_i,b_i)$, then $J_i = (a_i,b_i)_{\civita}$;
	\end{itemize}
	
	If $s: [a,b] \rightarrow \R$ is a step function, then it can be extended to a function $\tilde{s}:[a,b]_{\civita} \rightarrow \civita$ by posing
	$$
		\tilde{s} = \sum_{i\leq n} s_i J_i.
	$$
	The function $\tilde{s}$ is trivially $\m$-measurable, moreover
	\begin{equation}\label{eqn x lifting}
		\int_{[a,b]_\civita} \tilde{s} = \int_{[a,b]_\civita}^M \tilde{s} = \int_{[a,b]} s(x) \ dx.
	\end{equation}

	Let now $f\in C^k([a,b])$ and let $0 \leq h \leq k$. For every $\varepsilon \in\R, \varepsilon > 0$ let also $s_\varepsilon^+$ be a step function satifying $s_\varepsilon^+(x)> f(x)$ for every $x \in [a,b]$ and 
	\begin{equation}\label{eqn eccesso x lifting}
		\int_{[a,b]} s_\varepsilon^+(x) dx - \int_{[a,b]} f(x) \leq \varepsilon.
	\end{equation}
	Similarly, let $s_\varepsilon^-$ be a step function satifying $s_\varepsilon^-(x)< f(x)$ for every $x \in [a,b]$ and 
	\begin{equation}\label{eqn difetto x lifting}
		\int_{[a,b]} f(x) dx - \int_{[a,b]} s_\varepsilon^-(x) \leq \varepsilon.
	\end{equation}
	
	Notice that, by definition, we have $\tilde{s}_\varepsilon^+(x) > \ext{h}{f}(x)$ for every $x \in [a,b]_\civita$ and $\tilde{s}_\varepsilon^-(x) < \ext{h}{f}(x)$ for every $x \in [a,b]_\civita$.
	Moreover, equations \eqref{eqn x lifting}, \eqref{eqn eccesso x lifting} and \eqref{eqn difetto x lifting} entail that $\int_{[a,b]_\civita} \tilde{s}_\varepsilon^+ - \int_{[a,b]_\civita} \tilde{s}_\varepsilon^- < 2\varepsilon$.
	
	Since $\varepsilon$ is an arbitrary positive real number, we deduce that
	\begin{eqnarray*}
		 \displaystyle
		\sup\left\{\sh{\left(\int_{[a,b]_\civita} g \right)} : g(x)\leq \ext{h}{f}(x) \ \forall x \in [a,b]_\civita\right\} & =\\
		 \displaystyle
		\inf\left\{\sh{\left(\int_{[a,b]_\civita} g \right)} :g(x)\geq \ext{h}{f}(x) \ \forall x \in [a,b]_\civita\right\}
		&=&
		\int_{[a,b]} f(x) dx\\
	\end{eqnarray*}
	i.e.\ that $\ext{h}{f}$ is \civint\ and $\int_{[a,b]_\civita}^M \ext{h}{f} = \int_{[a,b]} f(x) dx$.
\end{proof}

A similar result applies also to continuous and integrable functions defined over open intervals or over $\R$.

\begin{corollary}\label{lifting corollary 1}
	If $f \in C^k((a,b))$ and $\int_{(a,b)} |f(x)| dx < +\infty$, then $\ext{h}{f}$ is \civint\ for every $0 \leq h \leq k$ and
	$$
	\int_{(a,b)}^M \ext{h}{f} = \int_{(a,b)} f(x) dx.
	$$
\end{corollary}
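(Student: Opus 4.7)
The plan is to reduce to Theorem~\ref{lifting thm} by exhausting $(a,b)$ with compact subintervals on which $f$ is $C^k$, and then using absolute integrability of $f$ to pass to the limit.

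First I would handle the bounded case $a, b \in \R$. For $n$ large, set $[a_n, b_n] = [a + 1/n, b - 1/n]$. Since $f \in C^k([a_n, b_n])$, Theorem~\ref{lifting thm} gives $\int_{[a_n, b_n]_\civita}^M \ext{h}{f} = \int_{[a_n, b_n]} f(x)\, dx$, and the right-hand side tends to $\int_{(a,b)} f(x)\, dx$ by absolute integrability. To close the argument, I would adapt the step-function sandwich from the proof of Theorem~\ref{lifting thm} globally on $(a,b)$: for each real $\varepsilon > 0$, pick real step functions $s_\varepsilon^\pm$ on $(a,b)$ with $s_\varepsilon^- < f < s_\varepsilon^+$ and $\int_{(a,b)}(s_\varepsilon^+ - s_\varepsilon^-)\, dx < \varepsilon$ (possible because $|f|$ is Lebesgue integrable). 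Their lifts $\tilde s_\varepsilon^\pm$ defined as in the proof of Theorem~\ref{lifting thm} lie in $\L^1((a,b)_\civita)$, sandwich $\ext{h}{f}$ at every nearstandard point of $(a,b)_\civita$, and satisfy $\int_{(a,b)_\civita} \tilde s_\varepsilon^\pm = \int_{(a,b)} s_\varepsilon^\pm\, dx$ by equation~\eqref{eqn x lifting}. By the definition of the $M$-integral, this yields that $\ext{h}{f}$ is \civint\ with $\int_{(a,b)_\civita}^M \ext{h}{f} = \int_{(a,b)} f(x)\, dx$.

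For the unbounded case, I would appeal to Definition~\ref{def integral over nonmeasurable sets}: the $M$-integral of $\ext{h}{f}$ over $\civita$ (or over a half-line of the form $[a,+\infty]_\civita$, $[-\infty,b]_\civita$) is the real limit, as $t \to -\infty$, of $\int_{[-d^t, d^t]_\civita}^M \ext{h}{f}$. By the bounded case applied to the real intervals $(-N, N) \cap (a,b)$ for $N \in \N$, each such quantity equals $\int_{(-N,N) \cap (a,b)} f(x)\, dx$ up to an infinitesimal (coming from the passage from $[-N, N]$ to $[-d^t, d^t]$ with $t < 0$ close to $0$), and absolute integrability of $f$ yields convergence to $\int_{(a,b)} f(x)\, dx$.

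The main obstacle I anticipate is the behaviour of $\ext{h}{f}$ on the non-nearstandard portion of $(a,b)_\civita$ in the bounded case (the infinitesimal strip near $a$ and $b$ where the Taylor expansion at a real point is not available) and on intervals of infinite length in the unbounded case. The definition of $M$-integrability requires sandwiching by $\L^1$ functions at every point of the domain, so the step-function lifts $\tilde s_\varepsilon^\pm$ must be defined and integrable globally; this is precisely what absolute integrability of $f$ enables, since the real step approximations can be taken so that their integrals near the boundary (or at infinity) are arbitrarily small, making the residual contribution to the $M$-integral vanish regardless of how $\ext{h}{f}$ is (or is not) defined on the excluded strip.
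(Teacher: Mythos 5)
Your bounded case is essentially the paper's proof of that case (the paper simply says: if $f$ is bounded, run the same argument as in Theorem \ref{lifting thm}; the exhaustion by compact subintervals $[a+1/n,b-1/n]$ is not needed once the step-function sandwich is available on all of $(a,b)$). But there is a genuine gap: the whole point of weakening $C^k([a,b])$ to $C^k((a,b))$ together with $\int_{(a,b)}|f|\,dx<+\infty$ is that $f$ may be \emph{unbounded} near the endpoints, e.g.\ $f(x)=x^{-1/2}$ on $(0,1)$. For such $f$ your key step --- ``pick real step functions $s_\varepsilon^{\pm}$ with $s_\varepsilon^-<f<s_\varepsilon^+$ and $\int_{(a,b)}(s_\varepsilon^+-s_\varepsilon^-)\,dx<\varepsilon$, possible because $|f|$ is Lebesgue integrable'' --- fails: a step function is bounded, so no $s_\varepsilon^+$ can dominate an unbounded $f$ pointwise, and integrability of $|f|$ does not rescue this. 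The two-sided sandwich is simply not available, which is why the paper's definition of {$M$-integrability} for unbounded non-negative functions is one-sided (a supremum over $\L^1$ minorants only). The paper's proof handles exactly this: for unbounded non-negative $f$ it notes that $\sup\{\int_{(a,b)}s\,dx : s \text{ step},\ s<f\}=\int_{(a,b)}f\,dx$, shows (by the lifting argument of Theorem \ref{lifting thm}) that this supremum coincides with $\sup\{\sh(\int_{(a,b)_\civita}g) : g\in\L^1,\ g\le \ext{h}{f}\}$, and then treats general $f$ via $f=f^+-f^-$, using $\int|f|<+\infty$ to guarantee both $\int f^{\pm}<+\infty$.

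Your ``unbounded case'' addresses a different issue: an unbounded \emph{domain}, handled via Definition \ref{def integral over nonmeasurable sets}. That is closer to Corollary \ref{lifting corollary 2} and is not the content of this corollary; even granting it, your proposal leaves the unbounded-integrand case untreated. To repair the argument, replace the upper step approximation by the one-sided characterization above (or argue separately for $f^+$ and $f^-$ after truncation, passing to the limit in the supremum over $\L^1$ minorants), rather than insisting on a pointwise majorant of $f$ by step functions.
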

\begin{proof}
	If $f$ is bounded, it is sufficient to apply the same argument of Theorem \ref{lifting thm}.
	
	If $f$ is not bounded and non-negative, it is sufficient to notice that
	$$
	\sup\left\{\int_A s(x) \ dx  : s \text{ is a step function and } s(x)< f(x) \ \forall x \in (a,b)\right\}
	= \int_{(a,b)} f(x) \ dx
	$$
	and that, with a similar argument as the one used in the proof of Theorem \ref{lifting thm},
	\begin{eqnarray*}
		& \displaystyle 
		\sup\left\{\sh{\left(\int_{(a,b)_\civita} g \right)} : g(x)\leq \ext{h}{f}(x) \ \forall x \in (a,b)_\civita\right\}\\
		& \displaystyle =\\
		& \displaystyle 
		\sup\left\{\int_{(a,b)_\civita} s(x) \ dx  : s \text{ is a step function and } s(x)< f(x) \ \forall x \in (a,b)\right\}.
	\end{eqnarray*}
	For arbitrary functions $f \in C^k((a,b))$, it is sufficient to apply the previous part of the proof to $f^+$ and to $f^-$.
	Notice also that the hypothesis $\int_{(a,b)} |f(x)| dx < +\infty$ is sufficient to entail that both $\int_{(a,b)} f^+(x) dx < +\infty$ and $\int_{(a,b)} f^-(x) dx < +\infty$.
\end{proof}


\begin{corollary}\label{lifting corollary 2}
	If $f \in C^k(\R)$ and $\int_{\R} |f(x)| dx < +\infty$, then $\ext{h}{f}$ is \civint\ over $\civita_{fin}$ for every $0 \leq h \leq k$ and
	$$
	\int_{\civita_{fin}}^M \ext{h}{f} = \int_{\R} f(x) \ dx.
	$$
\end{corollary}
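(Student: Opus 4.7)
The plan is to exhaust $\civita_{fin}$ by the nested sequence of $\m$-measurable closed intervals $[-n,n]_\civita$ with $n \in \N$, apply Theorem \ref{lifting thm} on each one to pass to a classical real integral, and then let $n \to \infty$ using standard Lebesgue theory on $\R$.

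First I would fix the meaning of the \civintegral\ over $\civita_{fin}$. This set is not $\m$-measurable, but it is the increasing countable union $\bigcup_{n\in\N}[-n,n]_\civita$ of $\m$-measurable sets, and the natural extension of Definition \ref{def integral over nonmeasurable sets}---entirely parallel to the treatment of $\civita$ in that same definition---is to declare $\ext{h}{f}$ to be \civint\ over $\civita_{fin}$ precisely when the real limit $\lim_{n\to\infty}\int_{[-n,n]_\civita}^M \ext{h}{f}$ exists in $\R$, and to define $\int_{\civita_{fin}}^M \ext{h}{f}$ as that limit.

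Next, for each $n \in \N$, the restriction of $f$ to $[-n,n]$ belongs to $C^k([-n,n])$, and the extension built from this restriction via Definition \ref{def ext} coincides with the restriction of $\ext{h}{f}$ to $[-n,n]_\civita$, since the order-$h$ Taylor polynomial at each real base point only depends on local data of $f$. Hence Theorem \ref{lifting thm} applies verbatim and gives
$$
\int_{[-n,n]_\civita}^M \ext{h}{f} \;=\; \int_{-n}^{n} f(x)\,dx
$$
for every $n \in \N$.

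Finally, the hypothesis $\int_{\R}|f(x)|\,dx < +\infty$ puts $f$ in $L^1(\R)$, so by the dominated convergence theorem on $\R$ one has $\lim_{n\to\infty}\int_{-n}^{n} f(x)\,dx = \int_{\R} f(x)\,dx$, and this is a finite real number. Combining this with the previous step shows that the defining limit for $\int_{\civita_{fin}}^M \ext{h}{f}$ exists in $\R$ and equals $\int_{\R} f(x)\,dx$, simultaneously establishing the \civint\ regularity of $\ext{h}{f}$ over $\civita_{fin}$ and the claimed identity. The main obstacle, such as it is, lies in the first step: the statement implicitly invokes a notion of \civint\ over a non-$\m$-measurable, unbounded set that Definition \ref{def integral over nonmeasurable sets} does not literally cover, and the whole argument hinges on reading that notion in the natural way suggested by the structure of that definition. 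Once this is settled, Theorem \ref{lifting thm} on the Levi-Civita side and classical dominated convergence on the real side close the argument without further subtleties.
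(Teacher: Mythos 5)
Your proposal is correct and follows essentially the same route as the paper: restrict to $[-t,t]_\civita$, apply Theorem \ref{lifting thm} there, and pass to the limit using $f\in L^1(\R)$. The one ``obstacle'' you flag is not actually there: $\civita_{fin}=A(0)$ and $d^0=1$, so Definition \ref{def integral over nonmeasurable sets} literally covers $\int_{\civita_{fin}}^M$ as $\lim_{t\to\infty}\int_{[-t,t]_\civita}^M$ (a limit over real $t$ rather than your $n\in\N$, which is immaterial here since $\int_{-t}^{t}f(x)\,dx\to\int_\R f(x)\,dx$ along the reals).
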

\begin{proof}
	It is a consequence of the definition of $\int_{\civita_{fin}}^M \ext{h}{f}$ and of the property that $\int_{[a,b]_\civita} \ext{h}{f} = \int_{[a,b]} f(x) \ dx$ for every $a, b \in\R$.
\end{proof}

\subsection{A restricted integration by parts for the \civintegral}
Notice that the \civintegral \ does not satisfy the fundamental theorem of calculus.
A counterexample is obtained as follows: define a function $F : \civita \rightarrow \civita$ by posing
\begin{equation}\label{eqn extl}
	F(x) = \left\{
	\begin{array}{ll}
	0 & \text{if } x<0 \text{ or } x\in\mu(0)\\
	1 & \text{if } x>0 \text{ and } x\not\in\mu(0).
	\end{array}
	\right.
\end{equation}
Then $F$ is continuous, $F'(x)=0$ for all $x\in\civita$, and $F$ is not $m$-measurable over any bounded interval that includes $\mu(0)$.
However, $F$ is \civint\ on every interval $[a,b]_\civita \subset \civita_{fin}$.
Consequently, the fundamental theorem of calculus fails, since e.g.\ if $a<0$ and $b>0$ one has
$$
	F(b)-F(a) = 1 \ne 0 = \int_{[a,b]_\civita}^M F'.
$$
Consequently, an integration by parts formula fails for the real-valued integral.

These drawbacks occur once a sufficiently large class of functions are integrable.
In fact, it is well-known that in non-Archimedean field extensions of $\R$ derivable functions with null derivative need not be constant, and that functions with positive derivative need not be increasing.
For more details on this issue where $\F=\civita$, we refer for instance to \cite{analysislcf,berz+shamseddine2003} and references therein.
Notice also that in fields of hyperreal numbers this problem is overcome by only working with internal functions, and that the hyperreal counterpart of function \eqref{eqn extl} is external.

Despite these limitations, we can still establish a reduced version of the fundamental theorem of calculus and of an integration by parts formula.
As expected, these results can only be obtained for measurable functions in the sense of Definition \ref{measurable functions}.

\begin{proposition}\label{thm fondamentale del calcolo}
	If $a, b \in \civita_{fin}$, $a<b$ and if $f, g\in \L^1([a,b]_\civita)$, then for every $F \in \L^1([a,b]_\civita)$ such that $F'(x) = f(x)$ for all $x\in[a,b]_\civita$,
	$$\int_{[a,b]_{\civita}}^M f = \sh (F(b) - F(a)).$$
\end{proposition}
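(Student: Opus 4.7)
The plan is to reduce the identity $\int_{[a,b]_\civita}^M f = \sh{(F(b)-F(a))}$ to the fundamental theorem of calculus for simple functions, via Proposition~\ref{prop L^1 implica Loeb 1}. Since $f \in \L^1([a,b]_\civita)$ and $\m([a,b]_\civita) = b-a \in \civita_{fin}$, Proposition~\ref{prop L^1 implica Loeb 1} yields $\int_{[a,b]_\civita}^M f = \sh{\left(\int_{[a,b]_\civita} f\right)}$, so it suffices to prove that $\int_{[a,b]_\civita} f \simeq F(b)-F(a)$ in $\civita$.

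To establish this infinitesimal equality, I would exploit the measurability of $F$: for every positive $\varepsilon \in \civita$, Definition~\ref{measurable functions} provides a sequence of mutually disjoint intervals $\{I_n\}_{n \in \N}$, with $I_n = (a_n, b_n)_\civita$ up to bracket type, such that $\bigcup_n I_n \subseteq [a,b]_\civita$, $\sum_n l(I_n)$ converges strongly in $\civita$ with $(b-a) - \sum_n l(I_n) < \varepsilon$, and $F$ is simple on each $I_n$. Since $F$ is a convergent power series on $I_n$ with derivative $f|_{I_n}$, Definition~\ref{def integral} together with Proposition~\ref{prop simple} gives $\int_{I_n} f = F(b_n) - F(a_n)$, where the endpoint values are computed via the simple extension, which coincides with the differentiable $F$ by continuity. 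Taking the limit as in Definition~\ref{def integral}, one obtains $\int_{[a,b]_\civita} f \simeq \sum_n [F(b_n) - F(a_n)]$ for sufficiently fine approximations.

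Ordering the intervals linearly so that $a \leq a_1 \leq b_1 \leq \cdots \leq b_N \leq b$, the telescoping identity reads
$$F(b) - F(a) - \sum_{n=1}^{N} \bigl[F(b_n) - F(a_n)\bigr] = \bigl[F(a_1) - F(a)\bigr] + \sum_{n=1}^{N-1} \bigl[F(a_{n+1}) - F(b_n)\bigr] + \bigl[F(b) - F(b_N)\bigr],$$
where the right-hand side collects the increments of $F$ across the complementary ``gaps,'' whose total length is at most $\varepsilon$. By further applying the measurability of $F$ inside each gap to obtain sub-intervals on which $F$ is again simple, each gap increment can be expressed, up to an infinitesimal error, as the $\m$-integral of $f$ over that gap; the aggregate is then bounded by $\int_{E} |f|$, where $E = [a,b]_\civita \setminus \bigcup_n I_n$ has infinitesimal measure.

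The main obstacle is controlling this gap contribution: in the Levi-Civita setting, continuity of $F$ alone does not bound the total variation of $F$ on sets of small measure, so one must combine measurability of $F$ (which permits arbitrary refinement inside the gaps) with an absolute-continuity property for $\L^1$-integrals, namely that $\int_E |f|$ becomes infinitesimal when $\m(E)$ is infinitesimal, as can be extracted from the $\L^p$-theory of \cite{lpcivita}. Once this is in place, sending $\varepsilon \to 0$ yields $F(b) - F(a) \simeq \sum_n [F(b_n) - F(a_n)] \simeq \int_{[a,b]_\civita} f$, and applying $\sh$ completes the proof.
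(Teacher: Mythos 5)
Your opening reduction—pass to $\sh$ via Proposition \ref{prop L^1 implica Loeb 1} and prove $\int_{[a,b]_\civita} f = F(b)-F(a)$ in $\civita$—is exactly the shape of the paper's proof, except that the paper does not reprove the non-Archimedean fundamental theorem of calculus at all: it simply quotes Proposition 3.24 of \cite{lpcivita} for the equality $\int_{[a,b]_\civita} f = F(b)-F(a)$ and then applies Proposition \ref{prop L^1 implica Loeb 1}. (Minor point: that proposition's hypothesis is $\int_A|f|\in\civita_{fin}$, not $\m(A)\in\civita_{fin}$, so your justification for invoking it is misquoted.) Your attempt to reprove the cited FTC directly is where the genuine gap lies.

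The problem is the control of the gap terms. The absolute-continuity property you rely on---that $\int_E |f|\simeq 0$ whenever $\m(E)\simeq 0$---is false on the Levi-Civita field, even in the exact setting of this proposition: take $\varepsilon\sim 0$, $F(x)=\varepsilon^{-1}x$ and $f=F'=\varepsilon^{-1}$ on $[0,1]_\civita$; both are simple, hence in $\L^1$, yet $\int_{[0,\varepsilon]_\civita}|f| = 1$ while $\m([0,\varepsilon]_\civita)=\varepsilon\simeq 0$ (the paper uses essentially this function as the counterexample following Proposition \ref{proposition values in fin implies same integral over infinitesimally close sets}). So no such lemma can be ``extracted from the $\L^p$-theory.'' Compounding this, the residual set $E=[a,b]_\civita\setminus\bigcup_n I_n$ need not be $\m$-measurable at all, since the family of $\m$-measurable sets is not closed under complements, so $\int_E|f|$ need not be defined; and the step rewriting each increment $F(a_{n+1})-F(b_n)$ as an $\m$-integral of $f$ over the gap is itself an instance of the FTC on a set for which Definition \ref{measurable functions} gives you no interval decomposition (measurability of $F$ on $[a,b]_\civita$ produces intervals exhausting the measure of the whole domain, not of the gaps), so that step is circular. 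Recall also that on $\civita$ a differentiable function can have appreciable increments across an interval of infinitesimal length on which its derivative is infinitesimal or zero---this is precisely the pathology of the function \eqref{eqn extl}---so continuity plus small total gap length bounds nothing. (A lesser, fixable issue: the disjoint intervals are countably many and need not admit the finite linear ordering your telescoping uses.) The repair is the paper's route: invoke Proposition 3.24 of \cite{lpcivita}, whose proof is exactly where the joint hypothesis $F\in\L^1([a,b]_\civita)$ is used to exclude these pathologies, and then conclude with Proposition \ref{prop L^1 implica Loeb 1}.
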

\begin{proof}
	As a consequence of Proposition 3.24 of \cite{lpcivita}, for $f$ and $F$ satisfying the hypotheses we have
	$$\int_{[a,b]_{\civita}} f = F(b) - F(a).$$
	The desired equality can then be obtained as a consequence of Proposition \ref{prop L^1 implica Loeb 1}.
\end{proof}

As a consequence of the above result, we also get a restricted integration by parts for the $L$-integral.

\begin{proposition}\label{integration by parts}
	If $a, b \in \civita_{fin}$, $a<b$ and if $f, g\in \L^1([a,b]_\civita)$, then
	$$\sh(f(b)g(b)-f(a)g(a)) = \int_{A}^M f'g + \int_A^M fg'.$$
\end{proposition}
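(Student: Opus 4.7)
The plan is to derive the restricted integration by parts formula as a direct consequence of the restricted fundamental theorem of calculus (Proposition \ref{thm fondamentale del calcolo}) applied to the product $fg$, together with linearity of the \civintegral\ (Proposition \ref{prop linearita l integral}). The chain is: product rule pointwise on each simplicity interval, then fundamental theorem, then split the integral.

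First I would check that the derivatives and products sit in the right function spaces. Measurability in the sense of Definition \ref{measurable functions} is stable under differentiation, since a simple function on an interval is a convergent power series and is therefore infinitely differentiable with simple derivative; an $\varepsilon$-approximating family $\{I_n\}_{n\in\N}$ for $f$ is then automatically an $\varepsilon$-approximating family for $f'$. By taking a common refinement of the approximating families of $f$ and $g$, I obtain a family on which $f, g, f', g'$ are all simultaneously simple; since the algebra $\P$ is closed under sums and products, the functions $fg$, $f'g$, $fg'$ are measurable. Combining this with the $\L^1$ hypotheses on $f$ and $g$, I would conclude that $fg \in \L^1([a,b]_\civita)$ and that $f'g, fg' \in \L^1([a,b]_\civita)$.

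Next, on every interval $I_n$ of the common refinement, the classical product rule $(fg)'(x) = f'(x)g(x) + f(x)g'(x)$ holds, because both $f$ and $g$ are given there by convergent power series. Applying Proposition \ref{thm fondamentale del calcolo} to $fg$ with antiderivative relation $(fg)' = f'g + fg'$ yields
$$\sh(f(b)g(b) - f(a)g(a)) = \int_{[a,b]_\civita}^M (fg)'.$$
Finally, by linearity of the \civintegral\ (Proposition \ref{prop linearita l integral}) applied with $x = y = 1$, the right-hand side splits as $\int_{[a,b]_\civita}^M f'g + \int_{[a,b]_\civita}^M fg'$, which is the claimed identity (interpreting $A$ in the statement as $[a,b]_\civita$).

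The main obstacle is the verification that the $\L^1$ hypothesis on $f$ and $g$ genuinely propagates to $\L^1$ membership of $f'g$, $fg'$, and $fg$, since Definition \ref{measurable functions} does not directly control the size of $f'$ or $g'$ on the approximating intervals, and integrability could in principle be destroyed by multiplication. I expect this to require a careful choice of the common refinement together with the convergence properties of sums of interval lengths in $\civita$, using the fact that simple functions extend continuously to the closure of their domain by Proposition \ref{prop simple} and therefore are bounded on each $I_n$.
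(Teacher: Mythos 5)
Your proposal is correct and follows essentially the same route as the paper: the product rule $(fg)'=f'g+fg'$ for functions that are locally given by power series, followed by the restricted fundamental theorem of calculus (Proposition \ref{thm fondamentale del calcolo}) applied to $fg$, and linearity of the \civintegral\ to split the result. The paper's own proof is just this chain stated tersely, leaving implicit the measurability and $\L^1$ bookkeeping for $fg$, $f'g$ and $fg'$ that you spell out (and rightly flag as the delicate point).
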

\begin{proof}
	Recall that, if $f$ and $g$ are analytic over the Levi-Civita field, then $(fg)'=f'g+g'f$ (see e.g.\ Theorem 5.2 of \cite{calculusnumerics}).
	Thanks to Proposition \ref{thm fondamentale del calcolo}, we deduce that
	$$
	\sh(f(b)g(b)-f(a)g(a)) = \int_{[a,b]_\civita}^M f'g + \int_{[a,b]_{\civita}}^M g'f,
	$$
	as desired.
\end{proof}

\subsection{Delta-like \civint\ functions and their derivatives}\label{sec applications}

Flynn and Shamseddine recently showed that it is possible to represent the Dirac distribution with some measurable functions defined over the Levi-Civita field \cite{shamflin2}.
Their results are mostly concerned with the duality between the so-called delta functions and analytic functions over $\civita$.
A duality with real continuous functions is developed in \cite{lpcivita}, where the discussion is extended also to the derivatives of the Dirac distribution.
However, the proposed approach is somewhat cumbersome, since it is not possible to represent real continuous functions that are not analytic with measurable functions over the Levi-Civita field (see Proposition 3.16 and Lemma 3.17 of \cite{lpcivita}).

This drawback is overcome by using the \civintegral\ defined in Section \ref{sec integral}. As an example, we now discuss the representation of the Dirac distribution and of its derivatives with \civint\ functions on $\civita$.
It is relevant to compare the treatment with the real-valued integral with the one obtained with weak limits of $\m$-measurable functions, discussed in detail in \cite{lpcivita}.

We extend the definition of Dirac-like measurable functions given in \cite{lpcivita} to that of Dirac-like \civint\ functions.

\begin{definition}\label{def dirac-like}
	A \civint\ function $\delta_r:\civita_{fin}\rightarrow\civita$ is Dirac-like at $r \in A$ iff
	\begin{enumerate}
		\item $\delta_{r}(x) \geq 0$ for all $x \in A$;
		\item there exists $h\in M_o$, $h>0$ such that $\supp \delta_{r} \subseteq [r-h,r+h]_{\civita} \subseteq A$;
		\item $\int_{\civita_{fin}}^M \delta_r  = 1$.
	\end{enumerate}
\end{definition}

Notice that, as a consequence of Proposition \ref{prop L^1 implica Loeb 1}, Dirac-like measurable functions are also Dirac-like \civint\ functions.

With this definition and thanks to Theorem \ref{lifting thm}, it is possible to show that Dirac-like \civint\ functions are good representatives of the real Dirac distribution.

\begin{proposition}\label{prop dirac-like}
	Let $r \in \R$.
	For all Dirac-like \civint\ functions $\delta_r$ and for all $f \in C^0(\R)$,
	$$
	\int_{\civita_{fin}}^M \left(\delta_r \cdot \ext{0}{f}\right) = f(r).
	$$
\end{proposition}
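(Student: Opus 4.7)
The plan is to reduce the statement to a trivial multiplication by a real scalar by exploiting the infinitesimal support of $\delta_r$ together with the defining property of $\ext{0}{f}$.

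First, I would fix a Dirac-like $M$-integrable function $\delta_r$ and let $h \in M_o$, $h>0$, be a positive infinitesimal satisfying $\supp \delta_r \subseteq [r-h,r+h]_{\civita}$. The crucial observation is that every $x \in [r-h,r+h]_{\civita}$ satisfies $|x-r| \leq h \simeq 0$, and since $r \in \R$ this forces $\sh{x} = r$. By the definition of the order-$0$ extension, $\ext{0}{f}(x) = f(\sh{x}) = f(r)$ for every $x$ in $\supp \delta_r$.

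Next, I would exploit this to rewrite the product pointwise on all of $\civita_{fin}$. For $x \notin \supp \delta_r$ the factor $\delta_r(x)$ vanishes, so both $(\delta_r \cdot \ext{0}{f})(x)$ and $f(r)\cdot \delta_r(x)$ are zero; for $x \in \supp \delta_r$ the previous paragraph gives $(\delta_r \cdot \ext{0}{f})(x) = f(r)\cdot \delta_r(x)$. Hence $\delta_r \cdot \ext{0}{f} = f(r)\cdot \delta_r$ as functions on $\civita_{fin}$, and in particular the left-hand side is $M$-integrable because $\delta_r$ is and $f(r) \in \R \subseteq \civita_{fin}$.

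Finally, I would invoke $\R$-linearity of the $M$-integral over the (non-measurable) domain $\civita_{fin}$, that is Proposition \ref{prop linearita l integral 2} combined with Definition \ref{def integral over nonmeasurable sets}, together with the third clause of Definition \ref{def dirac-like}, to conclude
\[
\int_{\civita_{fin}}^M \left(\delta_r \cdot \ext{0}{f}\right) = \sh{f(r)}\cdot \int_{\civita_{fin}}^M \delta_r = f(r)\cdot 1 = f(r).
\]
There is essentially no obstacle here beyond double-checking that linearity is available on $\civita_{fin}$ and that $\ext{0}{f}$ is well-defined at every nearstandard point of $\civita_{fin}$ (which it is, by Definition \ref{def nearstandard} and the hypothesis $f \in C^0(\R)$); both are already established. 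The proof would therefore be quite short, and the entire content of the argument is the collapse of $\ext{0}{f}$ to the constant $f(r)$ on the infinitesimal support of $\delta_r$.
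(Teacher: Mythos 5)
Your proposal is correct and follows essentially the same route as the paper: the paper's proof likewise notes that $\ext{0}{f}$ is constant (equal to $f(r)$) on the monad of $r$, which contains $\supp\delta_r$, and then pulls the real constant out of the $M$-integral to get $f(r)\cdot\int_{\civita_{fin}}^M \delta_r = f(r)$. Your extra care in checking that $\civita_{fin}=A(0)$ so that linearity (Proposition \ref{prop linearita l integral 2}) applies, and in justifying integrability of the product via $\delta_r \cdot \ext{0}{f}=f(r)\,\delta_r$, only makes explicit what the paper leaves implicit.
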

\begin{proof}
	The integral is well-defined thanks to Corollary \ref{lifting corollary 2}.
	Notice that $\ext{0}{f}$ is constant over $\mu(r)$.
	Then we have
	$$
	\int_{\civita_{fin}}^M \left(\delta_r \cdot \ext{0}{f}\right) = f(r) \cdot \int_{\civita_{fin}}^M \delta_r = f(r),
	$$
	as desired.
\end{proof}

In a similar way it is possible to represent the derivatives of the Dirac distribution.

\begin{proposition}\label{prop derivata dirac-like}
	Let $r \in \R$.
	For all Dirac-like \civint\ functions $\delta_r$, if $\delta_r \in \L^1(\civita) \cap C^k(\civita)$, then for all $f \in C^n(\R)$ with $n\geq k$, and for all $k \leq j \leq n$,
	$$
	\int_{\civita_{fin}}^M \left(\delta_r^{(k)} \cdot \ext{j}{f}\right) = (-1)^k f^{(k)}(r).
	$$
\end{proposition}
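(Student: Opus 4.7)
The plan is to reduce to the infinitesimal window where $\delta_r$ is supported, integrate by parts $k$ times to move the derivatives from $\delta_r^{(k)}$ onto the Taylor polynomial $\ext{j}{f}$, and then exploit that the derivatives of $\ext{j}{f}$ on the monad $\mu(r)$ are polynomials in an infinitesimal variable whose constant term is exactly $f^{(k)}(r)$.

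First, since $\supp\delta_r \subseteq [r-h,r+h]_\civita \subseteq A$ for some positive infinitesimal $h$, the same holds for $\supp\delta_r^{(k)}$, so one may replace $\int_{\civita_{fin}}^M$ by $\int_{[r-h,r+h]_\civita}^M$. The hypothesis $\delta_r\in C^k(\civita)$, together with the fact that $\delta_r$ vanishes on the open complement of its support, implies by continuity that $\delta_r^{(i)}(r-h)=\delta_r^{(i)}(r+h)=0$ for every $0\leq i\leq k-1$. Applying Proposition \ref{integration by parts} iteratively $k$ times on $[r-h,r+h]_\civita$, every boundary contribution vanishes, giving
\[
\int_{[r-h,r+h]_\civita}^M \delta_r^{(k)}\cdot\ext{j}{f} \;=\; (-1)^k \int_{[r-h,r+h]_\civita}^M \delta_r\cdot (\ext{j}{f})^{(k)}.
\]

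Next, by Definition \ref{def ext}, for $x=r+\varepsilon\in\mu(r)$ one has
\[
(\ext{j}{f})^{(k)}(r+\varepsilon) \;=\; \sum_{i=k}^{j} f^{(i)}(r)\,\frac{\varepsilon^{i-k}}{(i-k)!} \;=\; f^{(k)}(r) + \eta(\varepsilon),
\]
where $\eta(\varepsilon)$ is a polynomial in $\varepsilon$ with standard coefficients and no constant term; in particular $|\eta(\varepsilon)|\leq C|\varepsilon|\leq Ch$ on $\supp\delta_r$ for some real constant $C$ depending only on $f^{(k+1)}(r),\dots,f^{(j)}(r)$. By linearity of the \civintegral\ (Proposition \ref{prop linearita l integral}),
\[
\int_{[r-h,r+h]_\civita}^M \delta_r\cdot(\ext{j}{f})^{(k)} \;=\; f^{(k)}(r)\int_{[r-h,r+h]_\civita}^M\delta_r \;+\; \int_{[r-h,r+h]_\civita}^M \delta_r\cdot\eta.
\]
The first term equals $f^{(k)}(r)\cdot 1=f^{(k)}(r)$ by the normalization in Definition \ref{def dirac-like}. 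For the second, nonnegativity of $\delta_r$ and the bound $|\eta|\leq Ch$ on the support give $\bigl|\int_{[r-h,r+h]_\civita}\delta_r\cdot\eta\bigr|\leq Ch\int_{[r-h,r+h]_\civita}\delta_r$, which is infinitesimal, so by Proposition \ref{prop L^1 implica Loeb 1} its standard part is $0$. Combining these yields the claimed identity.

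The main obstacles I anticipate are technical rather than conceptual: one must verify at each stage of the iterated integration by parts that the intermediate products $\delta_r^{(k-i)}\cdot(\ext{j}{f})^{(i)}$ and the antiderivatives that appear genuinely belong to $\L^1([r-h,r+h]_\civita)$, so that Proposition \ref{integration by parts} actually applies (this is plausible because the interval has infinitesimal length and $\ext{j}{f}$ and its finitely many derivatives are bounded polynomials in $\varepsilon$ on the monad, while $\delta_r^{(i)}\in C^0$ with compact support contained in the interval), and that the measurability of the product $\delta_r\cdot\eta$ is preserved, which follows since $\eta$ is simple on $\mu(r)$ and $\delta_r\in\L^1\cap C^k$.
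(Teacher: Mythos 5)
Your proof is correct and follows essentially the same route as the paper: localize to the infinitesimal interval $[r-h,r+h]_\civita$ containing $\supp\delta_r$, iterate the restricted integration by parts of Proposition \ref{integration by parts} with vanishing boundary terms, and evaluate the remaining integral of $\delta_r\cdot(\ext{j}{f})^{(k)}$. The only (harmless) difference is at the final step, where the paper invokes Proposition \ref{prop dirac-like} together with Theorem 89 of \cite{berz}, while you instead expand $(\ext{j}{f})^{(k)}(r+\varepsilon)=f^{(k)}(r)+\eta(\varepsilon)$ and bound the error integral by an infinitesimal directly — an explicit estimate that in fact spells out what the paper's citation leaves implicit.
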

\begin{proof}
	The integral is well-defined thanks to Corollary \ref{lifting corollary 2}.
	
	Moreover, if $h\in M_o$, $h>0$ satisfies $\supp \delta_{r} \subset [r-h,r+h]_{\civita}$, then
	$\ext{j}{f} \in \L^1([r-h,r+h]_{\civita})$.
	In addition, $\delta_r^{(m)}(r\pm h)=0$ for every $0\leq m \leq k$.
	Taking into account these equalities, the restricted integration by parts formula established in Proposition \ref{integration by parts} ensures that
	$$
	\int_{\civita_{fin}}^M \left(\delta_r^{(k)} \cdot \ext{j}{f}\right)
	=
	\int_{[r-h,r+h]_{\civita}}^M \left(\delta_r^{(k)} \cdot \ext{j}{f}\right)
	=
	(-1)^k \int_{[r-h,r+h]_{\civita}}^M \left(\delta_r \cdot \ext{j}{f}^{(k)}\right)
	$$
	for every $0\leq m \leq k$.
	By Proposition \ref{prop dirac-like},
	$$
	(-1)^k \int_{[r-h,r+h]_{\civita}}^M \left(\delta_r \cdot \ext{j}{f}^{(k)}\right)
	\simeq
	(-1)^k \cdot \sh{\left(\ext{j}{f}^{(k)}(r)\right)}.
	$$
	Our hypotheses over $j$ and Theorem 89 of \cite{berz} entail that $\sh{\left(\ext{j}{f}^{(k)}(r)\right)} = f^{(k)}(r)$, so the proof is concluded.
\end{proof}

Notice that the above proposition is false if $j < k$. Under this hypothesis, $\ext{j}{f}$ is locally a polynomial of degree at most $j$, so that $\ext{j}{f}^{(k)}(x) = 0$ for all $x \in\civita_{fin}$.

A comparison of propositions \ref{prop dirac-like} and \ref{prop derivata dirac-like} with their counterparts in \cite{lpcivita} shows the advantage of the use of the real-valued integral for the representation of real distributions in the Levi-Civita field.
Significantly, in \cite{lpcivita} we were not able to define a $C^\infty$ structure over the spaces $\L^p \cap C^\infty(\civita)$ with the duality given by the non-Archimedean integral.
However, the results discussed in this section suggest that it is possible to do so by using the duality induced by the real-valued integral. 

\subsection*{Acknowledgements}
Alessandro Berarducci and Mauro Di Nasso provided insightful comments to some ideas presented in Section \ref{sec loeb}.

\end{document}